\long\def\forget#1{}
\newcommand{\Verkuerzung}[2]{#1}
\newcommand{\lang}[1]{\mbox{#1}}
\newcounter{commentcounter}
\newcommand{\comment}[1]{\stepcounter{commentcounter}{\color{red}\textbf{Comment \arabic{commentcounter}.} #1}
\immediate\write16{}
\immediate\write16{Warning: There was still a comment . . . }
\immediate\write16{}}
\newcounter{urscommentcounter}
\newcommand{\urscomment}[1]{\stepcounter{urscommentcounter}{\color{magenta}\textbf{Comment \arabic{urscommentcounter}.} #1}
\immediate\write16{}
\immediate\write16{Warning: There was still a comment . . . }
\immediate\write16{}}
\def\?{\ 
{\bf\color{red}???}\ 
\immediate\write16{}
\immediate\write16{Warning: There was still a question mark . . . }
\immediate\write16{}}
\theoremstyle{plain}
\newtheorem{theorem}{Theorem}[subsection]
\newtheorem{lemma}[theorem]{Lemma}
\newtheorem{proposition}[theorem]{Proposition}
\theoremstyle{definition}
\newtheorem{definition}[theorem]{Definition}
\newtheorem{definition-theorem}[theorem]{Definition-Theorem}
\newtheorem{definition-remark}[theorem]{Definition-Remark}
\newtheorem{remark}[theorem]{Remark}
\theoremstyle{remark}
\newcounter{zahl}
\def\theenumi{(\alph{enumi})}
\def\p@enumii{\theenumi}
\newcommand{\DS}{\displaystyle}
\newcommand{\TS}{\textstyle}
\newcommand{\SC}{\scriptstyle}
\newcommand{\SSC}{\scriptscriptstyle}
\newcommand{\cA}{\mathcal{A}}
\newcommand{\cF}{\mathcal{F}}
\newcommand{\cG}{\mathcal{G}}
\DeclareMathOperator{\Aut}{Aut}
\DeclareMathOperator{\Frob}{Frob}
\DeclareMathOperator{\Gal}{Gal}
\DeclareMathOperator{\GL}{GL}
\DeclareMathOperator{\Hom}{Hom}
\DeclareMathOperator{\Isom}{Isom}
\DeclareMathOperator{\Quot}{Frac}
\DeclareMathOperator{\Rep}{Rep}
\DeclareMathOperator{\Res}{Res}
\DeclareMathOperator{\SL}{SL}
\DeclareMathOperator{\Spec}{Spec}
\DeclareMathOperator{\Spf}{Spf}
\DeclareMathOperator{\Var}{V}
\newcommand{\alg}{{\rm alg}}
\DeclareMathOperator{\equi}{equi}
\newcommand{\et}{{\acute{e}t\/}}
\newcommand{\fppf}{{\it fppf\/}}
\newcommand{\fpqc}{{\it fpqc\/}}
\DeclareMathOperator{\id}{\,id}
\newcommand{\sep}{{\rm sep}}
\newcommand{\topol}{{\rm top}}
\DeclareMathOperator{\whtimes}{\mathchoice
            {\wh{\raisebox{0ex}[0ex]{$\DS\times$}}}
            {\wh{\raisebox{0ex}[0ex]{$\TS\times$}}}
            {\wh{\raisebox{0ex}[0ex]{$\SC\times$}}}
            {\wh{\raisebox{0ex}[0ex]{$\SSC\times$}}}}
\renewcommand{\phi}{\varphi}
\renewcommand{\epsilon}{\varepsilon}
\newcommand{\BOne} {{\mathchoice{\hbox{\rm1\kern-2.7pt l\kern.9pt}}
                              {\hbox{\rm1\kern-2.7pt l\kern.9pt}}
                              {\hbox{\scriptsize\rm1\kern-2.3pt l\kern.4pt}}
                              {\hbox{\scriptsize\rm1\kern-2.4pt l\kern.5pt}}}}
\newcommand{\BA}{{\mathbb{A}}}
\newcommand{\BD}{{\mathbb{D}}}
\newcommand{\BF}{{\mathbb{F}}}
\newcommand{\BL}{{\mathbb{L}}}
\newcommand{\BN}{{\mathbb{N}}}
\newcommand{\BP}{{\mathbb{P}}}
\newcommand{\BQ}{{\mathbb{Q}}}
\newcommand{\BZ}{{\mathbb{Z}}}
\newcommand{\CA}{{\cal{A}}}
\newcommand{\CE}{{\cal{E}}}
\newcommand{\CF}{{\cal{F}}}
\newcommand{\CG}{{\cal{G}}}
\newcommand{\CI}{{\cal{I}}}
\newcommand{\CL}{{\cal{L}}}
\newcommand{\CM}{{\cal{M}}}
\newcommand{\CN}{{\cal{N}}}
\newcommand{\CO}{{\cal{O}}}
\newcommand{\CP}{{\cal{P}}}
\newcommand{\CQ}{{\cal{Q}}}
\newcommand{\CS}{{\cal{S}}}
\newcommand{\CT}{{\cal{T}}}
\newcommand{\CU}{{\cal{U}}}
\newcommand{\CV}{{\cal{V}}}
\newcommand{\CX}{{\cal{X}}}
\newcommand{\CZ}{{\cal{Z}}}
\newcommand{\FG}{{\mathfrak{G}}}
\newcommand{\FL}{{\mathfrak{L}}}
\newcommand{\FM}{{\mathfrak{M}}}
\newcommand{\scrH}{{\mathscr{H}}}
\let\setminus\smallsetminus
\newcommand{\es}{\enspace}
\newcommand{\dual}{^{\SSC\lor}}
\newcommand{\ul}[1]{{\underline{#1}}}
\newcommand{\ol}[1]{{\overline{#1}}}
\newcommand{\wh}[1]{{\widehat{#1}}}
\newcommand{\wt}[1]{{\widetilde{#1}}}
\newcommand{\SSS}{S}
\newcommand{\TTT}{T}
\newcommand{\invlim}[1][]{\ifthenelse{\equal{#1}{}}
{\DS \lim_{\longleftarrow}}
{\DS \lim_{\underset{#1}{\longleftarrow}}}
}
\newcommand{\dirlim}[1][]{\ifthenelse{\equal{#1}{}}
{\DS \lim_{\longrightarrow}}
{\DS \lim_{\underset{#1}{\longrightarrow}}}
}
\newcommand{\dbl}{{\mathchoice{\mbox{\rm [\hspace{-0.15em}[}}
                              {\mbox{\rm [\hspace{-0.15em}[}}
                              {\mbox{\scriptsize\rm [\hspace{-0.15em}[}}
                              {\mbox{\tiny\rm [\hspace{-0.15em}[}}}}
\newcommand{\dbr}{{\mathchoice{\mbox{\rm ]\hspace{-0.15em}]}}
                              {\mbox{\rm ]\hspace{-0.15em}]}}
                              {\mbox{\scriptsize\rm ]\hspace{-0.15em}]}}
                              {\mbox{\tiny\rm ]\hspace{-0.15em}]}}}}
\newcommand{\dpl}{{\mathchoice{\mbox{\rm (\hspace{-0.15em}(}}
                              {\mbox{\rm (\hspace{-0.15em}(}}
                              {\mbox{\scriptsize\rm (\hspace{-0.15em}(}}
                              {\mbox{\tiny\rm (\hspace{-0.15em}(}}}}
\newcommand{\dpr}{{\mathchoice{\mbox{\rm )\hspace{-0.15em})}}
                              {\mbox{\rm )\hspace{-0.15em})}}
                              {\mbox{\scriptsize\rm )\hspace{-0.15em})}}
                              {\mbox{\tiny\rm )\hspace{-0.15em})}}}}
\newcommand{\dotBD}{\vbox{\hbox{\kern2pt\bf.}\vskip-4.5pt\hbox{$\BD$}}}
\DeclareMathOperator{\QIsog}{QIsog}
\DeclareMathOperator{\Nilp}{\CN \!{\it ilp}}
\DeclareMathOperator{\Sets}{\CS \!{\it ets}}
\def\s{\sigma^\ast}
\def\longto{\longrightarrow}
\def\into{\hookrightarrow}
\def\isoto{\stackrel{}{\mbox{\hspace{1mm}\raisebox{+1.4mm}{$\SC\sim$}\hspace{-3.5mm}$\longrightarrow$}}}
\newbox\mybox
\def\arrover#1{\mathrel{
       \setbox\mybox=\hbox spread 1.4em{\hfil$\scriptstyle#1$\hfil}
       \vbox{\offinterlineskip\copy\mybox
             \hbox to\wd\mybox{\rightarrowfill}}}}
\newcommand{\BaseOfD}{\BF}
\newcommand{\genericG}{P}
\newcommand{\Sht}{Sht}
\DeclareMathOperator{\SpaceFl}{\CF\ell}
\newcommand{\tauGlob}{\tau}
\newcommand{\tauLoc}{\hat\tau}
\newcommand{\charsect}{s}
\DeclareMathOperator{\AbSh}{\CA {\it b}-\CS {\it h}}
\begin{document}

\author{Esmail Arasteh Rad\forget{\footnote{Part of this research was carried out while I was visiting Institute For Research In Fundamental Sciences (IPM).}}
and Somayeh Habibi\forget{\footnote{This research was in part supported by a grant from IPM(No. 93510038).}}}

\date{\today}

\title{Local Models For The Moduli Stacks of Global $\FG$-Shtukas\\}

\maketitle

\begin{abstract}

In this article we develop the theory of local models for  the moduli stacks of global $\FG$-shtukas, the function field analogs for Shimura varieties. Here $\FG$ is a smooth affine group scheme over a smooth projective curve. \forget{We first generalize the previous constructions of the moduli stack of global $\FG$-shtukas by introducing a global version of the boundedness condition $\CZ$ on $\FG$-shtukas. Then we develop the theory of local models for these algebraic stacks. This is done according to the following two approaches.}As the first approach, we relate the local geometry of these moduli stacks to the geometry of Schubert varieties inside global affine Grassmannian, only by means of global methods. Alternatively, our second approach uses the relation between the deformation theory of global $\FG$-shtukas and associated local $\BP$-shtukas at certain characteristic places. Regarding the analogy between function fields and number fields, the first (resp. second) approach corresponds to the Beilinson-Drinfeld-Gaitsgory (resp. Rapoport-Zink) local model for (PEL-)Shimura varieties. As an application, we prove the flatness of these moduli stacks over their reflex rings, for tamely ramified group $\FG$. \forget{Furthermore this discussion will establish a conceptual relation between the above approaches.}Furthermore, we introduce the Kottwitz-Rapoport stratification on these moduli stacks and discuss the intersection cohomology of the special fiber.\\

\noindent
{\it Mathematics Subject Classification (2000)\/}: 
11G09,  
(11G18,  
14L05,  
14M15)  
\end{abstract}

\tableofcontents

%
%

\section{Introduction}

Let $\BF_q$ be a finite field with $q$ elements, let $C$ be a smooth projective geometrically irreducible curve over $\BF_q$, and let $\FG$ be a flat affine group scheme of finite type over $C$. A \emph{global $\FG$-shtuka} $\ul\CG$ over an $\BF_q$-scheme $S$ is a tuple $(\CG,\ul\charsect,\tauGlob)$ consisting of a $\FG$-torsor $\CG$ over $C_S:=C\times_{\BF_q}S$, an $n$-tuple of (characteristic) sections $\ul s:=(\charsect_i)_i\in C^n(S)$ and a Frobenius connection $\tauGlob$ defined outside the graphs $\Gamma_{\charsect_i}$ of the sections $\charsect_i$, that is, an isomorphism $\tauGlob\colon\s \CG|_{C_S\setminus \cup_i \Gamma_{\charsect_i}}\isoto \CG|_{C_S\setminus \cup_i \Gamma_{\charsect_i}}$ where $\s=(\id_C \times \Frob_{q,S})^\ast$. 

Philosophically, a global $\FG$-shtuka may be considered as a $\FG$-motive, in the following sense. It admits crystalline (resp. \'etale) realizations at characteristic places (resp. away from characteristic places) which are endowed with $\FG$-actions; see \cite[Section~5.2]{AH_Local}, \cite[Chapter~6]{AH_Global} and also \cite[Chapter~2]{AH_LR}. Consequently, according to Deligne's motivic conception of Shimura varieties \cite{Deligne1,Deligne2}, the moduli stacks of global $\FG$-shtukas can be regarded as the function field analogue for Shimura varieties. Based on this philosophy, they play a central role in the Langlands program over function fields. Hereupon several moduli spaces (resp. stacks) parametrizing families of such objects have been constructed and studied by various authors. Among those one could mention the space of $F$-sheaves $FSh_{D,r}$ which was considered by Drinfeld~\cite{Drinfeld1} and Lafforgue~\cite{Laff} in their proof of the Langlands correspondence for $\FG=\GL_2$ and \ $\FG=\GL_r$ respectively, and which in turn was generalized by Varshavsky's~\cite{Var} moduli stacks $FBun$.  Likewise the moduli stacks $Cht_{\ul\lambda}$ of Ng\^o and Ng\^o Dac~\cite{NgoNgo}, $\CE\ell\ell_{C,\mathscr{D},I}$ of Laumon, Rapoport and Stuhler~\cite{LRS}, and $\AbSh^{r,d}_H$ of Hartl \cite{Har1} and also the moduli stacks $\nabla_n^{\ul\omega}\scrH_D^1(C,\mathfrak{G})$, constructed by  Hartl and the first author, that generalizes the previous constructions; see \cite{AH_Local} and \cite{AH_Global}. 

The above moduli stack $\nabla_n^{\ul\omega}\scrH_D^1(C,\mathfrak{G})$ parametrizes $\FG$-shtukas bounded  by $\ul\omega$ which are equipped with $D$-level structure. Here $\FG$ is a flat affine group scheme of finite type over $C$. The superscript $\ul\omega$ denotes an $n$-tuple of coweights of $\SL_r$ and $D$ is a divisor on $C$.  It can be shown that $\nabla_n^{\ul\omega}\scrH_D^1(C,\mathfrak{G})$ is Deligne-Mumford and separated over $C^n$; see \cite[Theorem~3.14]{AH_Global}. This construction depends on a choice of a faithful representation $\rho:\FG\to \SL_r$. Accordingly in \cite{AH_Global} the authors also propose an intrinsic alternative definition for the moduli stack of global $\FG$-shtukas, in which they roughly replace $\ul\omega$ by an $n$-tuple $\wh Z_\ul\nu$ of certain (equivalence class of) closed subschemes of twisted affine flag varieties and they further refine the $D$-level structure to $H$-level structure, for a compact open subgroup $H\subset \FG (\BA_Q^{\ul\nu})$. Here $\BA_Q^{\ul \nu}$ is the ring of adeles of $C$ outside the fixed $n$-tuple $\ul\nu:=(\nu_i)_i$ of places $\nu_i$ on $C$. For detailed account on $H$-level structures on a global $\FG$-shtuka, we refer the reader to \cite[Chapter~6]{AH_Global}. The resulting moduli stack is denoted by $\nabla_n^{H,\wh Z_{\ul\nu}}\scrH^1(C,\mathfrak{G})^{\ul \nu}$. Note however that the boundedness conditions introduced in \cite{AH_Global} has been established by imposing bounds to the associated tuple of local $\BP_{\nu_i}$-shtukas, that are obtained by means of the global-local functor \ref{G-LFunc}. In other words, the moduli stack of ``bounded'' $\FG$-shtukas was defined only after passing to the formal completion at the fixed characteristic places $\ul \nu:=(\nu_i)$; see Definition \ref{DefGlobaltoLocal}.\\
In this article we first introduce a \emph{global} notion of boundedness condition $\CZ$ (which roughly is an equivalence class of closed subschemes of a global affine Grassmannian, satisfying some additional properties. This accredits us to define the moduli stack of bounded global $\FG$-shtukas $\nabla_n^\CZ\scrH_D^1(C,\FG)$ over n-fold fiber product $C_\CZ^n:=C_{Q_\CZ}^n$ of the \emph{reflex curve} associated to the \emph{reflex field} $Q_\CZ$; see Definition \ref{DefAltGlobalBC}. \forget{\comment{may be: over a fiber product $\ul C_{\CZ}$ of the \emph{reflex curves}, which are associated to $n$-tuple of reflex fields $\ul Q_\CZ$; see Definition \ref{DefGlobalBC}} \comment{clarify the notation after fixing the definition of BC..}} \\
Now we come to the main theme of the present article, the theory of local models for the moduli stack $\nabla_n^{H,\CZ}\scrH^1(C,\mathfrak{G})$. Here it is necessary to assume that $\FG$ is a smooth affine group scheme over $C$. We study the local models for the moduli stacks of global $\FG$-shtukas according to the following two approaches.\\
\noindent 
As the first approach (global approach), we prove that ``global Schubert varieties'' inside the global affine Grassmannian $GR_{\FG,n}$, see Definition \ref{Glob_Grass}, may appear as a local model for the moduli stack of $\FG$-shtukas $\nabla_n^\CZ\scrH_D^1(C,\mathfrak{G})$. We prove this in Theorem \ref{ThmLocalModelI}. 
Notice that, regarding the analogy between number fields and function fields, this construction mirrors the Beilinson, Drinfeld and Gaitsgory construction of the local model for Shimura varieties.\\
\noindent
Notice that, when $\FG$ is constant, i.e. $\FG:=G_0\times_{\BF_q} C$, for a split reductive group $G_0$, our moduli stacks of global $\FG$-shtukas coincide the Varshavsky's moduli stacks of $F$-bundles. In this case the above observation was first formulated by Varshavsky \cite[Theorem 2.20]{Var}. His proof relies on the well-known theorem of Drinfeld and Simpson \cite{DS95}, which assures that a $G_0$-bundle over $C_S$ is Zariski-locally trivial after suitable \'etale base change $S'\to S$. The latter statement essentially follows from their argument about existence of $B$-structure on any $G_0$-bundle, where $B$ is a Borel subgroup of $G_0$. Hence, one might not hope to implement this result in order to treat the present general case. Therefore, in chapter \ref{LMII} we modify Varshavsky's argument and produce a proof which is independent of the theorem of Drinfeld and Simpson.\\

\noindent
As the second approach (local approach), we use the relation between the deformation theory of global $\FG$-shtukas and local $\BP$-shtukas, to relate the (local) geometry of $\nabla_n^{H,\wh Z_{\ul\nu}}\scrH^1(C,\mathfrak{G})^{\ul \nu}$ to the (local) geometry of (a product of certain) Schubert varieties inside twisted affine flag varieties; see Proposition \ref{PropLocalModelHecke} and Theorem \ref{ThmRapoportZinkLocalModel}. To this goal, we basically implement the local theory of global $\FG$-shtukas, developed in \cite{AH_Local}. 
\forget
{
Subsequently, we construct the local model diagram and we use it to compare the local geometry of $\nabla_n^{H,\wh Z_{\ul\nu}}\scrH^1(C,\mathfrak{G})^{\ul \nu}$ with certain affine Schubert varieties at the end of section \ref{SubsectionLMTII}; see Proposition \ref{PropLocalModelHecke} and Theorem \ref{ThmRapoportZinkLocalModel}.

Namely, in Proposition \ref{PropLocalModelHecke} and Theorem \ref{ThmRapoportZinkLocalModel} we construct the local model roof and we use it to compare the (local) geometry of $\nabla_n^{H,\wh Z_{\ul\nu}}\scrH^1(C,\mathfrak{G})^{\ul \nu}$ to a product of certain affine Schubert varieties. }This construction is analogous to the Rapoport-Zink \cite{RZ} construction of local models for PEL-type Shimura varieties. As an application, in Theorem \ref{ThmFlatIntModel}, we prove the flatness of moduli stack $\nabla_n^{H,\wh Z_{\ul\nu}}\scrH^1(C,\mathfrak{G})^{\ul \nu}$ over fiber product of reflex rings, for a parahoric Bruhat-Tits group scheme $\FG$ whose generic fiber $G$ splits over a tamely ramified extension and that $p$ does not divide the order of the fundamental group $\pi_1(G_{der})$. This result can be viewed as a function field analog of a result of Pappas and Zhu for Shimura varieties; see \cite{PZ}. Furthermore, using the local model roof, we introduce the analogue of \emph{Kottwitz-Rapoport stratification} on the special fiber of $\nabla_n^{H,Z_{\ul\nu}}\scrH^1(C,\mathfrak{G})^\ul \nu$ and we observe that the corresponding intersection cohomology complex is of pure Tate type nature.

\forget{one may view a twisted affine flag variety as a moduli space  parmetrizing tuples $(\CL_+, \delta)$ consisting of a torsor $\CL_+$ for the group of positive loops $L^+\BP$ over $S$ together with a trivialization of the associated $LP$-torsor $\CL$. Then using local theory of global $\FG$-shtukas, developed in \cite{AH_Local}, we establish  
local model theorm ~\ref{ThmRapoportZinkLocalModel}, and consequently, we see that a product of certain affine Schubert varieties, may appear as the local model for $\nabla_n^{H,\wh Z_{\ul\nu}}\scrH^1(C,\mathfrak{G})^{\ul \nu}$.  This construction is analogous to the Rapoport-Zink \cite{RZ} construction of the local model for PEL-type Shimura varieties. Using these results we produce the analogue of the Kottwitz-Rapoport stratification on the special fiber of $\nabla_n^{H,Z_{\ul\nu}}\scrH^1(C,\mathfrak{G})^\ul \nu$ and we show that the corresponding intersection cohomology complex is of mixed Tate type nature. }

\noindent
Let us finally mention that our results in this article have number theoretic applications to the  generalizations of the results of Varshavsky \cite{Var} about the intersection cohomology of the moduli stacks of global $G$-shtukas, as well as V.~Lafforgue \cite{VLaff} about Langlands parameterization, to the non-constant reductive case. Additionally, according to the results obtained in \cite{AH_LR}, we expect further applications related to the possible description of the cohomology of affine Deligne-Lusztig varieties.


%
%

\subsection{Notation and Conventions}\label{Notation and Conventions}
Throughout this article we denote by
\begin{tabbing}
$\genericG_\nu:=\FG\times_C\Spec Q_\nu,$\; \=\kill
$\BF_q$\> a finite field with $q$ elements of characteristic $p$,\\[1mm]
$C$\> a smooth projective geometrically irreducible curve over $\BF_q$,\\[1mm]
$Q:=\BF_q(C)$\> the function field of $C$,\\[1mm]

$\BaseOfD$\> a finite field containing $\BF_q$,\\[1mm]

$\wh A:=\BF\dbl z\dbr$\>  the ring of formal power series in $z$ with coefficients in $\BF$ ,\\[1mm]
$\wh Q:=\Quot(\wh A)$\> its fraction field,\\[1mm]

$\nu$\> a closed point of $C$, also called a \emph{place} of $C$,\\[1mm]
$\BF_\nu$\> the residue field at the place $\nu$ on $C$,\\[1mm]

$\wh A_\nu$\> the completion of the stalk $\CO_{C,\nu}$ at $\nu$,\\[1mm]
$\wh Q_\nu:=\Quot(\wh A_\nu)$\> its fraction field,\\[1mm]

$\BD_R:=\Spec R\dbl z \dbr$ \> \parbox[t]{0.79\textwidth}{\Verkuerzung
{
the spectrum of the ring of formal power series in $z$ with coefficients in an $\BaseOfD$-algebra $R$,
}

{}
}
\Verkuerzung
{
\\[1mm]
$\hat{\BD}_R:=\Spf R\dbl z \dbr$ \> the formal spectrum of $R\dbl z\dbr$ with respect to the $z$-adic topology.
}
{}
\end{tabbing}
\Verkuerzung
{
\noindent
}
\noindent
When the ring $R$ is obvious from the context we drop the subscript $R$ from our notation.\\

\noindent
For a formal scheme $\wh S$ we denote by $\Nilp_{\wh S}$ the category of schemes over $\wh S$ on which an ideal of definition of $\wh S$ is locally nilpotent. We  equip $\Nilp_{\wh S}$ with the \'etale topology. We also denote by
\begin{tabbing}
$\genericG_\nu:=\FG\times_C\Spec \wh Q_\nu,$\; \=\kill
$n\in\BN_{>0}$\> a positive integer,\\[1mm]
$\ul \nu:=(\nu_i)_{i=1\ldots n}$\> an $n$-tuple of closed points of $C$,\\[1mm]
$\BA^\ul\nu$\> the ring of integral adeles of $C$ outside $\ul\nu$,\\[1mm]
$\wh A_\ul\nu$\> the completion of the local ring $\CO_{C^n,\ul\nu}$ of $C^n$ at the closed point $\ul\nu=(\nu_i)$,\\[1mm]
$\Nilp_{\wh A_\ul\nu}:=\Nilp_{\Spf \wh A_\ul\nu}$\> \parbox[t]{0.79\textwidth}{the category of schemes over $C^n$ on which the ideal defining the closed point $\ul\nu\in C^n$ is locally nilpotent,}\\[2mm]
$\Nilp_{\BaseOfD\dbl\zeta\dbr}$\lang{$:=\Nilp_{\hat\BD}$}\> \parbox[t]{0.79\textwidth}{the category of $\BD$-schemes $S$ for which the image of $z$ in $\CO_S$ is locally nilpotent. We denote the image of $z$ by $\zeta$ since we need to distinguish it from $z\in\CO_\BD$.}\\[2mm]
$\FG$\> a smooth affine group scheme of finite type over $C$,\\[1mm]
$G$\> generic fiber of $\FG$,\\[1mm]
$\BP_\nu:=\FG\times_C\Spec \wh A_\nu,$ \> the base change of $\FG$ to $\Spec \wh A_\nu$,\\[1mm]
$\genericG_\nu:=\FG\times_C\Spec \wh Q_\nu,$ \> the generic fiber of $\BP_\nu$ over $\Spec Q_\nu$,\\[1mm]
$\BP$\> a smooth affine group scheme of finite type over $\BD=\Spec\BaseOfD\dbl z\dbr$,\\[1mm] 
$\genericG$\> the generic fiber of $\BP$ over $\Spec\BaseOfD\dpl z\dpr$.
\end{tabbing}

\noindent
Let $S$ be an $\BF_q$-scheme and consider an $n$-tuple $\ul s:=(s_i)_i\in C^n(S)$. We denote by $\Gamma_\ul s$ the union $\bigcup_i \Gamma_{s_i}$ of the graphs $\Gamma_{s_i}\subseteq C_S$. \\

\noindent
For an affine closed subscheme $Z$ of $C_S$ with sheaf $\CI_Z$ we denote by $\BD_S(Z)$ the scheme obtained by taking completion along $Z$ and by $\BD_{S,n}(Z)$ the closed subscheme of $\BD_S(Z)$ which is defined by $\CI_Z^n$. Moreover we set $\dot{\BD}_S(Z):=\BD_S(Z)\times_{C_S} (C_S\setminus Z)$.\\

\noindent
We denote by $\sigma_S \colon  S \to S$ the $\BF_q$-Frobenius endomorphism which acts as the identity on the points of $S$ and as the $q$-power map on the structure sheaf. Likewise we let $\hat{\sigma}_S\colon S\to S$ be the $\BaseOfD$-Frobenius endomorphism of an $\BaseOfD$-scheme $S$. We set
\begin{tabbing}
$\genericG_\nu:=\FG\times_C\Spec Q_\nu,$\; \=\kill
$C_S := C \times_{\Spec\BF_q} S$ ,\> and \\[1mm]
$\sigma := \id_C \times \sigma_S$.
\end{tabbing}

\noindent
Let $H$ be a sheaf of groups (for the \'etale topology) on a scheme $X$. In this article a (\emph{right}) \emph{$H$-torsor} (also called an \emph{$H$-bundle}) on $X$ is a sheaf $\CG$ for the \'etale topology on $X$ together with a (right) action of the sheaf $H$ such that $\CG$ is isomorphic to $H$ on a \'etale covering of $X$. Here $H$ is viewed as an $H$-torsor by right multiplication. \\

\begin{definition}\label{ker}
Assume that we have two morphisms $f,g\colon X\to Y$ of schemes or stacks. We denote by $\equi(f,g\colon X\rightrightarrows Y)$ the pull back of the diagonal under the morphism $(f,g)\colon X\to Y\times_\BZ Y$, that is $\equi(f,g\colon X\rightrightarrows Y)\,:=\,X\times_{(f,g),Y\times Y,\Delta}Y$ where $\Delta=\Delta_{Y/\BZ}\colon Y\to Y\times_\BZ Y$ is the diagonal morphism.
\end{definition}

\begin{definition}\label{DefIwahori-Weyl}
 Assume that the generic fiber $\genericG$ of $\BP$ over $\Spec\BaseOfD\dpl z\dpr$ is connected reductive. Consider the base change $\genericG_L$ of $\genericG$ to $L=\BaseOfD^\alg\dpl z\dpr$. Let $S$ be a maximal split torus in $\genericG_L$ and let $T$ be its centralizer. Since $\BaseOfD^\alg$ is algebraically closed, $\genericG_L$ is quasi-split and so $T$ is a maximal torus in $\genericG_L$. Let $N = N(T)$ be the normalizer of $T$ and let $\CT^0$ be the identity component of the N\'eron model of $T$ over $\CO_L=\BaseOfD^\alg\dbl z\dbr$.

The \emph{Iwahori-Weyl group} associated with $S$ is the quotient group $\wt{W}= N(L)\slash\CT^0(\CO_L)$. It is an extension of the finite Weyl group $W_0 = N(L)/T(L)$ by the coinvariants $X_\ast(T)_I$ under $I=\Gal(L^\sep/L)$:
$$
0 \to X_\ast(T)_I \to \wt W \to W_0 \to 1.
$$
By \cite[Proposition~8]{H-R} there is a bijection
\begin{equation}\label{EqSchubertCell}
L^+\BP(\BaseOfD^\alg)\backslash L\genericG(\BaseOfD^\alg)/L^+\BP(\BaseOfD^\alg) \isoto \wt{W}^\BP  \backslash \wt{W}\slash \wt{W}^\BP
\end{equation}
where $\wt{W}^\BP := (N(L)\cap \BP(\CO_L))\slash \CT^0(\CO_L)$, and where $LP(R)=P(R\dbl z\dbr)$ and $L^+\BP(R)=\BP(R\dbl z\dbr)$ are the loop group, resp.\ the group of positive loops of $\BP$; see \cite[\S\,1.a]{PR2}, or \cite[\S4.5]{B-D}, \cite{Ngo-Polo} and \cite{Faltings03} when $\BP$ is constant. Let $\omega\in \wt{W}^\BP\backslash \wt{W}/\wt{W}^\BP$ and let $\BaseOfD_\omega$ be the fixed field in $\BaseOfD^\alg$ of $\{\gamma\in\Gal(\BaseOfD^\alg/\BaseOfD)\colon \gamma(\omega)=\omega\}$. There is a representative $g_\omega\in L\genericG(\BaseOfD_\omega)$ of $\omega$; see \cite[Example~4.12]{AH_Local}. The \emph{Schubert variety} $\CS(\omega)$ associated with $\omega$ is the ind-scheme theoretic closure of the $L^+\BP$-orbit of $g_\omega$ in $\SpaceFl_\BP\whtimes_{\BaseOfD}\BaseOfD_\omega$. It is a reduced projective variety over $\BaseOfD_\omega$. For further details see \cite{PR2} and \cite{Richarz}.  

\end{definition}

\noindent
Finally by an IC-sheaf $IC(\CX)$ on a stack $\CX$ , we will mean the intermediate extension of the constant perverse sheaf $\ol \BQ_\ell$ on an open dense substack $\CX^\circ$ of $\CX$ such that the corresponding reduced stack $\CX_{red}^\circ$ is smooth. The IC-sheaf is normalized so that it is pure of weight zero.

%
%

\section{Preliminaries}

Let $\BF_q$ be a finite field with $q$ elements, let $C$ be a smooth projective geometrically irreducible curve over $\BF_q$, and let $\FG$ be a smooth affine group scheme of finite type over C.

\begin{definition}
We let $\scrH^1(C,\FG)$ denote the category fibered in groupoids over the category of $\BF_q$-schemes, such that the objects over $S$, $\scrH^1(C,\FG)(S)$, are $\FG$-torsors over $C_S$ (also called $\FG$-bundles) and morphisms are isomorphisms of $\FG$-torsors. 
\end{definition}

\begin{remark}\label{RemBun_G}
One can prove that the stack $\scrH^1(C,\FG)$ is a smooth Artin-stack locally of finite type over $\BF_q$. Furthermore, it admits a covering $\{\scrH_\alpha^1\}_{\alpha}$ by connected open substacks of finite type over $\BF_q$.  \forget{For this one only needs to assume that $\FG$ is a flat affine group scheme of finite type over $C$.} The proof for parahoric $\FG$ (with semisimple generic fiber) can be found in \cite[Proposition~1]{Heinloth} and for general case we refer to \cite[Theorem~2.5]{AH_Global}.
\end{remark}

\begin{remark}\label{RemGlobalRep}
There is a faithful representation $\rho\colon\FG\hookrightarrow\GL(\CV)$ for a vector bundle $\CV$ on $C$ together with an isomorphism $\alpha\colon\wedge^\topol\CV\isoto\CO_C$ such that $\rho$ factors through $\SL(\CV):=\ker\bigl(\det\colon\!\!\GL(\CV)\to\GL(\wedge^\topol\CV)\bigr)$ and the quotients $\SL(\CV)/\FG$ and $\GL(\CV)/\FG$ are quasi-affine schemes over $C$. Note that for the existence of such a representation it even suffices to assume that $\FG$ is a flat affine group scheme. For a detailed account, see \cite{AH_Global}[Proposition~2.2].
\end{remark}

\begin{definition}\label{DefD-LevelStr}
Let $D$ be a proper closed subscheme of $C$. A \emph{$D$-level structure} on a $\FG$-bundle $\CG$ on $C_S$ is a trivialization $\psi\colon \CG\times_{C_S}{D_S}\isoto \FG\times_C D_S$ along $D_S:=D\times_{\BF_q}S$. Let $\scrH_D^1(C,\mathfrak{G})$ denote the stack classifying $\FG$-bundles with $D$-level structure, that is, $\scrH_D^1(C,\mathfrak{G})$ is the category fibred in groupoids over the category of $\BF_q$-schemes, which assigns to an $\BF_q$-scheme $S$ the category whose objects are
$$
Ob\bigl(\scrH_D^1(C,\FG)(S)\bigr):=\left\lbrace (\CG,\psi)\colon \CG\in \scrH^1(C,\FG)(S),\, \psi\colon \CG\times_{C_S}{D_S}\isoto \FG\times_C D_S \right\rbrace,
$$ 
and whose morphisms are those isomorphisms of $\FG$-bundles that preserve the $D$-level structure.
\end{definition}

\noindent
Let us recall the definition of the (unbounded ind-algebraic) Hecke stacks.
\begin{definition}\label{Hecke}
For each natural number $n$, let $Hecke_{n,D}(C,\FG)$ be the stack fibered in groupoids over the category of $\BF_q$-schemes, whose $S$ valued points are tuples $\bigl((\CG,\psi),(\CG',\psi'),\ul\charsect,\tauGlob\bigr)$ where
\begin{itemize}
\item[--] $(\CG,\psi)$ and $(\CG',\psi')$ are in $\scrH_D^1(C,\FG)(S)$,
\item[--] $\ul\charsect:=(\charsect_i)_i \in (C\setminus D)^n(S)$ are sections, and
\item[--] $\tau\colon  \CG_{|_{{C_S}\setminus{\Gamma_{\ul\charsect}}}}\isoto \CG_{|_{{C_S}\setminus{\Gamma_{\ul\charsect}}}}'$ is an isomorphism preserving the $D$-level structures, that is, $\psi'\circ\tauGlob=\psi$.
\end{itemize}
If $D=\emptyset$ we will drop it from the notation. Note that forgetting the isomorphism $\tau$ defines a morphism 
\begin{equation}\label{EqFactors}
Hecke_{n,D}(C,\FG)\to \scrH_D^1(C,\FG)\times \scrH_D^1(C,\FG) \times (C\setminus D)^n.
\end{equation}
\end{definition}

\begin{remark}\label{HeckeisQP}

A choice of faithful representation  $\rho\colon  \FG \to \SL(\CV)$ as in Remark~\ref{RemGlobalRep} with quasi-affine (resp.\ affine) quotient $\SL(\CV)\slash \FG$ and coweights $\omega_i=(d,0,\ldots,0,-d)$ of $\SL(\CV)$, induce an ind-algebraic structure on the stack $Hecke_n(C,\FG)$, which is relatively representable over $\scrH^1(C,\FG)\times_{\BF_q}C^n$ by an ind-quasi-projective (resp. ind-projective) morphism; see \cite[Proposition 3.10]{AH_Global}.
\end{remark}

\begin{definition}\label{Glob_Grass}
The \emph{global affine Grassmannian} $GR_n(C,\FG)$ is the stack fibered in groupoids over the category of $\BF_q$-schemes, whose $S$-valued points are tuples $(\CG,\ul\charsect,\epsilon)$, where $\CG$ is a $\FG$-bundle over $C_S$, $\ul\charsect:=(s_i)_i \in C^n(S)$ and $\epsilon\colon \CG|_{C_S\setminus \Gamma_\ul s}\isoto \FG \times_C (C_S\setminus \Gamma_\ul s)$ is a trivialization. Since we fixed the curve $C$ and the group $\FG$, we often drop them from notation and write  $GR_n:=GR_n(C,\FG)$.
\end{definition}

\begin{remark}\label{RemGlobAffGrass}
Notice that the global affine Grassmannian $GR_n$ is isomorphic to the fiber product $Hecke_n(C,\FG)\times_{\CG,\,\scrH^1(C,\FG),\FG}\Spec\BF_q$ under the morphism sending $(\CG,\ul\charsect,\epsilon)$ to $(\FG_S,\CG,\ul\charsect,\epsilon^{-1})$. Hence, after we fix a faithful representation $\rho\colon\FG\into\SL(\CV)$ and coweights $\ul\omega$, as in Remark \ref{RemGlobalRep}, the ind-algebraic structure on $Hecke_n(C,\FG)$, induces an ind-quasi-projective ind-scheme structure on $GR_n$ over $C^n$.
\end{remark}

\noindent
The following proposition explains the geometry of the stack $Hecke_n(C,\FG)$ as a family over $C^n \times \scrH^1(C,\FG)$.

\begin{proposition}\label{GR-Hecke}
Consider the stacks $Hecke_n(C,\FG)$ and $GR_n\times \scrH^1(C,\FG)$ as families over $C^n \times \scrH^1(C,\FG)$, via the projections $(\CG,\CG',\ul s,\tauGlob)\mapsto (\ul s,\CG')$ and $(\wt{\CG},\ul s,\wt\tauGlob)\times \CG'\mapsto (\ul s,\CG')$ respectively. They are locally isomorphic with respect to the \'etale topology on $C^n\times \scrH^1(C,\FG)$.
\end{proposition}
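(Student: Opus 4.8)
The plan is to exhibit an explicit étale-local trivialization over $\scrH^1(C,\FG)$ that "straightens" the second torsor $\CG'$ in a Hecke-datum into the trivial torsor, thereby identifying the two families. The key point is that $\scrH^1(C,\FG)$ is a smooth Artin stack locally of finite type (Remark~\ref{RemBun_G}), so by definition of an Artin stack it admits a smooth — and after the usual refinement, étale — atlas, and more to the point: over the stack $\scrH^1(C,\FG)$ itself there is a tautological $\FG$-torsor, and we want to know that this tautological torsor becomes trivial étale-locally on (a base changed version of) $\scrH^1(C,\FG)$. This is essentially the statement that an $\FG$-torsor over $C_S$, after an fppf (hence, since $\FG$ is smooth, étale) cover $S'\to S$, ... is \emph{not} in general trivial on $C_{S'}$ — this is exactly the Drinfeld--Simpson issue the authors warn about in the introduction. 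So the correct reading is more modest: we do not trivialize $\CG'$ globally on $C_{S'}$, we only need the two \emph{families over $C^n\times\scrH^1(C,\FG)$} to agree étale-locally on the base $C^n\times\scrH^1(C,\FG)$.

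**First I would** set up the comparison morphism. Over the base $C^n\times\scrH^1(C,\FG)$, write $\CG'_{\univ}$ for the universal $\FG$-torsor pulled back from the second factor. Then $Hecke_n(C,\FG)$ is, by Remark~\ref{RemGlobAffGrass}-type reasoning, the functor sending $T\to C^n\times\scrH^1(C,\FG)$ (giving $\ul s$ and $\CG'_T$ over $C_T$) to the set of pairs $(\CG_T,\tauGlob)$ with $\tauGlob\colon\CG_T|_{C_T\setminus\Gamma_{\ul s}}\isoto\CG'_T|_{C_T\setminus\Gamma_{\ul s}}$; that is, $Hecke_n$ is the twist of $GR_n$ by the $L^+\FG$-torsor (in the appropriate sense along the formal neighborhood $\Gamma_{\ul s}$) coming from $\CG'_{\univ}$. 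On the other hand $GR_n\times\scrH^1(C,\FG)$ is the \emph{untwisted} version, i.e. the trivial twist. So the statement reduces to: the $L^+\FG$-torsor over $C^n\times\scrH^1(C,\FG)$ attached to $\CG'_{\univ}$ (restrict $\CG'$ to the formal completion $\BD_{\cdot}(\Gamma_{\ul s})$ and take its "positive loop" torsor) is trivial étale-locally on $C^n\times\scrH^1(C,\FG)$.

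**The main obstacle** — and the substantive content — is precisely this triviality of the $L^+\FG$-torsor. Here one uses that $\FG$ is smooth affine over $C$, so that restricting a $\FG$-torsor to the formal disc $\BD_S(\Gamma_{\ul s})$ (a relative affine scheme over $S$, indeed its reduction is finite over $S$) and then to the $m$-th infinitesimal neighborhood $\BD_{S,m}(\Gamma_{\ul s})$ gives a tower of torsors under the smooth affine groups $\FG$ mod successive powers of $\CI_{\Gamma_{\ul s}}$; a torsor under a smooth affine group scheme over an affine base is, étale-locally on that base, trivial, and by smoothness one lifts trivializations up the tower, passing to the limit (using that $\FG$ is of finite type, so the relevant positive-loop group is a projective limit of smooth affine group schemes of finite type and $H^1_{\et}$ commutes with the limit after a suitable quasi-compact reduction). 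Since the base here is the stack $\scrH^1(C,\FG)$ rather than an affine scheme, I would first pass to the smooth atlas $\{\scrH^1_\alpha\}$ of finite type from Remark~\ref{RemBun_G}, work over an affine étale chart of each $\scrH^1_\alpha$, construct the trivialization there, and note it descends to an étale-local statement on $C^n\times\scrH^1(C,\FG)$. The upshot: étale-locally the twist is trivial, so $Hecke_n(C,\FG)$ and $GR_n\times\scrH^1(C,\FG)$ become isomorphic as families over $C^n\times\scrH^1(C,\FG)$; this is exactly what the proposition asserts, and it is the global-affine-Grassmannian analogue of the classical local-model roof identification, with the smoothness of $\FG$ doing the work that Drinfeld--Simpson did in the split constant case.
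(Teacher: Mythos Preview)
Your approach is correct and genuinely different from the paper's. The paper simply adapts Varshavsky's argument \cite[Lemma~4.1]{Var}: it takes an \'etale cover $V\to C\times_{\BF_q}\scrH^1(C,\FG)$ on which the \emph{universal} $\FG$-bundle becomes trivial (this exists because $\FG$ is smooth, so $\FG$-torsors are \'etale-locally trivial on $C\times\scrH^1$), forms the $n$-fold fiber product $U=V\times_{\scrH^1}\cdots\times_{\scrH^1}V$ over $\scrH^1$ to get an \'etale cover of $C^n\times\scrH^1$, and then runs Varshavsky's explicit gluing construction verbatim over $U$. In other words, the paper trivializes $\CG'$ on an \'etale open of the relative curve near the graphs, not on the formal disc.

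Your route instead identifies $Hecke_n$ as the twist of $GR_n$ by the $\FL_n^+\FG$-torsor $\CG'_{\univ}|_{\BD(\Gamma_{\ul s})}$ (via Beauville--Laszlo, Remark~\ref{RemarkBeauvilleLaszlo}) and then argues that this positive-loop torsor is \'etale-locally trivial on the base by smoothness of $\FG$ and the infinitesimal lifting criterion up the tower of thickenings. This is essentially the content of the paper's later Lemma~\ref{lemfppfTrivialization} (upgraded from \fppf\ to \'etale using smoothness) combined with Remark~\ref{RemFormalTorsor}\ref{formal torsor a}. Your argument is more self-contained and aligns better with the formal-disc language the paper uses throughout Chapters~3 and~4; the paper's argument has the virtue of being a one-line reduction to \cite{Var} for readers who know that source. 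One small point to tighten: when you check that the finite-level jet groups are smooth over $C^n$, you should take $\Gamma_{\ul s}$ as the \emph{sum} of the Cartier divisors $\Gamma_{s_i}$ (as the paper implicitly does in Lemma~\ref{lemfppfTrivialization}) rather than the scheme-theoretic union, so that $\BD_{S,m}(\Gamma_{\ul s})\to S$ is finite locally free even where sections collide; this does not affect the limit $\BD(\Gamma_{\ul s})$ or the statement.
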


\begin{proof}
The proof proceeds in a similar way as \cite[Lemma~4.1]{Var}, only one has to replace $S$ by $\scrH^1(C,\FG)$ and take an \'etale cover $V\to C\times_{\BF_q} \scrH^1(C,\FG)$ trivializing the universal $\FG$-bundle over $\scrH^1(C,\FG)$ rather than a Zariski trivialization over $S$. 
Also one sets $U=V\times_{\scrH^1(C,\FG)}\ldots\times_{\scrH^1(C,\FG)}V$, $U'=Hecke_n(C,\FG)\times_{C^n\times \scrH^1(C,\FG)}U$, $U''=GR_n\times_{C^n} U$, $V'=V\times_{C\times \scrH^1(C,\FG),\CG'} C\times U'$ and $V''=V\times_{C\times\scrH^1(C,\FG)} C\times U''$.
\end{proof}

\noindent
Now we recall the construction of the (unbounded ind-algebraic) stack of global $\FG$-shtukas.

\begin{definition}\label{Global Sht}
We define the \emph{moduli stack} $\nabla_n\scrH_D^1(C,\mathfrak{G})$ \emph{of global $\FG$-shtukas with $D$-level structure} to be the preimage in $Hecke_{n,D}(C,\FG)$ of the graph of the Frobenius morphism on $\scrH^1(C,\FG)$. In other words
$$
 \nabla_n\scrH_D^1(C,\mathfrak{G}):=\equi(\sigma_{\scrH_D^1(C,\mathfrak{G})} \circ pr_1,pr_2\colon  Hecke_{n,D}(C,\FG)\rightrightarrows \scrH_D^1(C,\FG)),
$$ 
where $pr_i$ are the projections to the first, resp.\ second factor in \eqref{EqFactors}. Each object $\ul{\cG}$ of $\nabla_n\scrH_D^1(C,\FG)(S)$ is called a \emph{global $\FG$-shtuka with $D$-level structure over $S$} and the corresponding sections $\ul\charsect:=(\charsect_i)_i$ are called the \textit{characteristic sections} (or simply \textit{characteristics}) of $\ul \CG$.\\ 
\noindent
More explicitly a global $\FG$-shtuka $\ul\CG$ with $D$-level structure over an $\BF_q$-scheme $S$ is a tuple $(\CG,\psi,\ul\charsect,\tauGlob)$ consisting of a $\FG$-bundle $\CG$ over $C_S$, a trivialization $\psi\colon \CG\times_{C_S}{D_S}\isoto \FG\times_C D_S$, an $n$-tuple of (characteristic) sections $\ul\charsect$, and an isomorphism $\tauGlob\colon  \s \CG|_{C_S\setminus \Gamma_{\ul\charsect}}\isoto \CG|_{C_S\setminus  \Gamma_{\ul\charsect}}$ with $\psi\circ\tauGlob=\s(\psi)$. If $D=\emptyset$ we drop $\psi$ from $\ul\CG$ and write $\nabla_n\scrH^1(C,\mathfrak{G})$ for the \emph{stack of global $\FG$-shtukas}. Sometimes we will fix the sections $\ul s:=(\charsect_i)_i\in C^n(S)$ and simply call $\ul\CG=(\CG,\tauGlob)$ a global $\FG$-shtuka over $S$. The ind-algebraic structure $\dirlim Hecke_n^\ul\omega(C,\FG)$ on the stack $Hecke_n(C,\FG)$ induces an ind-algebraic structure $\dirlim[\ul\omega]\nabla_n^{\ul \omega} \scrH_D^1(C, \FG)$ on $\nabla_n \scrH_D^1(C, \FG)$. 
\end{definition}

\begin{theorem}\label{ThmnHisArtin}
Let $D$ be a proper closed subscheme of $C$. The stack $\nabla_n \scrH_D^1(C, \FG)=\dirlim\nabla_n^{\ul \omega} \scrH_D^1(C, \FG)$ is an ind-algebraic stack (see \cite[Definition 3.13.]{AH_Global}) over $(C\setminus D)^n$ which is ind-separated and locally of ind-finite type. The stacks $\nabla_n^{\ul \omega} \scrH_D^1(C, \FG)$ are Deligne-Mumford. Moreover, the forgetful morphism $\nabla_n \scrH_D^1(C, \FG)\rightarrow \nabla_n \scrH^1(C, \FG)\times_{C^n}(C\setminus D)^n$ is surjective and a torsor under the finite group $\FG(D)$.
\end{theorem}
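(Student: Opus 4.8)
The plan is to establish each of the four assertions of Theorem~\ref{ThmnHisArtin} by reducing it to corresponding statements about $Hecke_{n,D}(C,\FG)$ and $\scrH^1(C,\FG)$, which were recalled or cited in the preceding remarks. The starting point is that $\nabla_n\scrH_D^1(C,\FG)$ is defined as $\equi(\sigma\circ pr_1,pr_2)$ inside $Hecke_{n,D}(C,\FG)$, i.e.\ it fits into a cartesian square whose other corner is the diagonal of $\scrH_D^1(C,\FG)$. First I would recall from Remark~\ref{RemBun_G} that $\scrH^1(C,\FG)$ is a smooth Artin stack locally of finite type, and from Remark~\ref{HeckeisQP} (after fixing a faithful representation $\rho\colon\FG\into\SL(\CV)$ with quasi-affine quotient and suitable coweights $\ul\omega$) that $Hecke_n^{\ul\omega}(C,\FG)\to \scrH^1(C,\FG)\times_{\BF_q}C^n$ is representable by an ind-quasi-projective morphism, so that the exhaustion $Hecke_{n,D}(C,\FG)=\dirlim Hecke_{n,D}^{\ul\omega}(C,\FG)$ has algebraic, finite-type pieces over $(C\setminus D)^n$.

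Next I would treat the algebraicity, ind-separatedness and ind-finite type. Since the diagonal $\Delta\colon\scrH_D^1(C,\FG)\to\scrH_D^1(C,\FG)\times\scrH_D^1(C,\FG)$ is representable, quasi-compact and (for an Artin stack with affine diagonal, which one has here after adding $D$-level structure with $D\neq\emptyset$, or in general because $\scrH^1(C,\FG)$ has affine diagonal) separated, its pullback along $(\sigma\circ pr_1,pr_2)$ is again representable and separated; applying this fibrewise to the $Hecke_{n,D}^{\ul\omega}$ shows each $\nabla_n^{\ul\omega}\scrH_D^1(C,\FG)$ is an algebraic stack, separated and of finite type over $(C\setminus D)^n$, and passing to the limit gives the ind-algebraic, ind-separated, locally ind-finite-type claim. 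For the Deligne--Mumford assertion the key point is that the automorphism groups become finite (indeed unramified): a shtuka $(\CG,\tauGlob)$ has $\Aut(\ul\CG)=\{g\in\Aut(\CG):\tauGlob\circ\s g=g\circ\tauGlob\}$, which is the $\sigma$-fixed points of the affine group scheme $\underline{\Aut}(\CG)$ over $S$, hence finite étale over $S$ by Lang's theorem / the standard étaleness of a fixed-point locus under Frobenius; formally, one shows the diagonal of $\nabla_n^{\ul\omega}\scrH_D^1(C,\FG)$ is unramified by the same Frobenius-fixed-point argument as in \cite[Theorem~3.14]{AH_Global} (or \cite{Var} in the constant case), and I would simply cite that.

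For the last statement, the forgetful morphism $\Phi\colon\nabla_n\scrH_D^1(C,\FG)\to\nabla_n\scrH^1(C,\FG)\times_{C^n}(C\setminus D)^n$ that drops $\psi$: given a global $\FG$-shtuka $(\CG,\ul s,\tauGlob)$ with characteristics in $(C\setminus D)^n$, a lift to the source is a trivialization $\psi\colon\CG\times_{C_S}D_S\isoto\FG\times_C D_S$ satisfying the Frobenius-compatibility $\psi\circ\tauGlob=\s(\psi)$. Since $\Gamma_{\ul s}\cap D_S=\emptyset$, $\tauGlob$ is an isomorphism in a neighbourhood of $D_S$, so pulling $\psi$ around the Frobenius connection is well-defined; the set of such $\psi$ is either empty or a torsor under the group of $\tauGlob$-compatible automorphisms of $\FG\times_C D_S$, which is $\FG(D)^{\sigma}$-type data — more precisely one checks the sheaf of lifts is, étale-locally on $S$, a torsor under the finite (constant) group $\FG(D)$ of $\BF_q$-points, using that any two trivializations differ by an element of $\FG(D_S)$ and the $\sigma$-compatibility cuts this down to $\FG(D)$. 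Surjectivity then follows because étale-locally a trivialization compatible with $\tauGlob$ exists (spread out an arbitrary trivialization and use that the obstruction is an $\FG(D)$-torsor, hence split after an étale cover). I expect the main obstacle to be this final step: carefully identifying the sheaf of $D$-level structures compatible with $\tauGlob$ as an $\FG(D)$-torsor and verifying its local triviality, i.e.\ the surjectivity, rather than the more formal algebraicity statements, which follow mechanically from the cartesian description and the cited representability of the Hecke stack. I would model that argument on the corresponding passage in \cite[\S3]{AH_Global}.
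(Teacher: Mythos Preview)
The paper does not actually give a proof here: it simply cites \cite[Theorem 3.15]{AH_Global}. So there is no argument in the paper to compare your sketch against; the theorem is imported wholesale from the companion article.

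That said, your outline is essentially the argument one finds in \cite{AH_Global}. The reduction to the cartesian description over $Hecke_{n,D}^{\ul\omega}(C,\FG)$, the use of representability and separatedness of the diagonal of $\scrH_D^1(C,\FG)$ to transfer algebraicity and separatedness, and the Frobenius-fixed-point argument for unramifiedness of the diagonal (hence Deligne--Mumford) are all correct and standard. Your treatment of the $\FG(D)$-torsor claim is also the right one: two compatible trivializations differ by a $\sigma$-invariant element of $\FG(D_S)$, i.e.\ an element of the constant finite group $\FG(D)$, and existence of a compatible trivialization \'etale-locally is a Lang-type statement for the smooth affine group $\Res_{D/\BF_q}\FG$, whose Lang map $g\mapsto g^{-1}\sigma^*(g)$ is \'etale. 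You are right that this last step is where the actual content lies; the rest is formal bookkeeping. One small point to be careful about: Lang's theorem in its usual form requires connectedness, so in \cite{AH_Global} the surjectivity is argued via the \'etaleness of the Lang map together with smoothness of $\FG$, rather than by invoking Lang's theorem as a black box.
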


\begin{proof}
See \cite[Theorem 3.15]{AH_Global}.
\end{proof}

%
%

\section{Analogue Of Beilinson-Drinfeld-Gaitsgory Local Model}\label{LMII}
\setcounter{equation}{0}

Local models for Shimura varieties in the Beilinson-Drinfeld \cite{B-D} and Gaitsgory \cite{Gat} context are in fact constructed as certain scheme-theoretic closures of orbits of positive loop groups inside a fiber product of global affine Grassmannian and a flag variety, which are associated with cocharacters $\lambda\in X_+(T)$, for a maximal split torus $T$ of a constant split reductive group $G$. \\
\noindent
This construction has been generalized to the case of Shimura varieties with parahoric level structure $K\subset G(\BQ_p)$, for which $G$ is non-split (split over tamely ramified extension) by Pappas and Zhu \cite{PZ}.\\ 
\noindent
The function fields analogous construction, for a split reductive group $G$ over $\BF_q$, is well studied by Varshavsky \cite{Var}. In this chapter we generalize his construction of the local model to the case where $\FG$ is a smooth affine group scheme over $C$. Note in particular that this obviously includes the case where $\FG$ is parahoric\forget{(i.e. a smooth group scheme over $C$ with connected fibers and reductive generic fiber)}. To this purpose, we must first introduce the notion of \emph{global boundedness condition}, and correspondingly, the notion of \emph{reflex fields} for the moduli stacks of global $\FG$-shtukas. 


\subsection{Global Boundedness Conditions}\label{SubsecGLobBound}

In this section we establish an \emph{intrinsic global boundedness condition} on the moduli stack $\nabla_n\scrH^1(C,\FG)$ (resp. $Hecke_n(C,\FG)$). The appellation \emph{global boundedness condition} comes from the fact that it bounds 
the relative position of $\s \CG$ and $\CG$ (resp. $\CG'$ and $\CG$) under the isomorphism $\tau$, globally on $C$.  Note further that our proposed definition is intrinsic, in the sense that it is independent of the choice of representation $\rho: \FG \to SL(\CV)$; see \cite[Remark~3.9]{AH_Global}. The corresponding local version of this notion was previously introduced in \cite[Definition~4.8]{AH_Local}.  \\

\noindent
Let us first recall the definition of global loop groups associated with $\FG$.

\begin{definition}\label{DefGlobalLoopGroup}
The group of (positive) loops $\FL_n \FG$ (resp. $\FL_n^+\FG$) of $\FG$ is an ind-scheme (resp. a scheme) representing the functor whose $R$-valued points consist of tuples $(\ul \charsect, \gamma)$ where $\ul\charsect:=(s_i)_i\in C^n(\Spec R)$ and $\gamma\in \FG(\dot{\BD}(\Gamma_\ul s))$(resp. $\gamma\in \FG(\BD(\Gamma_\ul s))$). The projection $(\ul s,\gamma)\mapsto \ul s$ defines morphism $\FL_n\FG\to C^n$, (resp.~ $\FL_n^+\FG\to C^n$).
\end{definition}

\begin{remark}\label{RemarkBeauvilleLaszlo}
Note that by the general form of the descent lemma of Beauville-Laszlo \cite[Theorem 2.12.1]{B-L}, the map which sends $(\CG, \ul s, \epsilon)\in GR_n(S)$ to the triple $(\ul s,\wh\CG:=\CG|_{\BD(\Gamma_\ul s)}, \dot{\epsilon}:=\epsilon|_{\dot{\BD}(\Gamma_\ul s)})$ is bijective. Thus the loop groups $\FL_n\FG$ and $\FL_n^+ \FG$ act on $GR_n$ by changing the trivialization on $\dot{\BD}(\Gamma_\ul s)$.
\end{remark}

\forget
{
\bigskip
\urscomment{I had a slightly different definition in mind. Maybe your problems can be solved with this definition:}
\begin{definition}\label{DefGlobalBC}
We fix an algebraic closure $Q^\alg$ of the function field $Q:=\BF_q(C)$ of the curve $C$. For a finite field extension $Q\subset K$ with $K\subset Q^\alg$ we consider the normalization $\wt C_K$ of $C$ in $K$. It is a smooth projective curve over $\BF_q$ together with a finite morphism $\wt C_K\to C$.

\begin{enumerate}

\item\label{DefGlobalBC_A} 
For a tuple $\ul K:=(K_1,\ldots,K_n)$ of finite extensions $K_i$ of $Q$ as above, set $\wt C_{\ul K}:=\wt C_{K_1}\times\ldots\times \wt C_{K_n}$ and consider closed ind-subschemes $Z_{\ul K}$ of $GR_n\times_{C^n} \wt C_{\ul K}$.
We call two closed ind-subschemes $Z_{\ul K}\subseteq GR_n\times_{C^n} \wt C_{\ul K}$ and $Z'_{\ul K'}\subseteq GR_n\times_{C^n} \wt C_{\ul K'}$ \emph{equivalent} if there are finite field extensions $K_i.K'_i\subset K''_i\subset Q^\alg$, such that $Z_{\ul K} \times_{\wt C_{\ul K}} \wt C_{\ul K''}=Z'_{\ul K'} \times_{\wt C_{\ul K'}} \wt C_{\ul K''}$ in $GR_n\times_{C^n} \wt C_{\ul K''}$. 

\item\label{DefGlobalBC_B}
Let $\CZ=[Z_{\ul K}]$ be an equivalence class of closed ind-subschemes $Z_{\ul K} \subseteq GR_n\times_{C^n} C_{\ul K}$ and let $G_\CZ:= \{\ul g \in \Aut(Q^\alg/Q)^n: \ul g^*(\CZ)=\CZ\}$. Here the $i$-th component $g_i$ of $\ul g=(g_1,\ldots,g_n)$ maps $K_i$ to $g_i(K_i)$. We set $\ul g(\ul K)=(g_1(K_1),\ldots,g_n(K_n))$ and let $\ul g:\wt C_{\ul g(\ul K)}\to \wt C_{\ul K}$ be the product of the induced $C$-morphisms $g_i:\wt C_{g_i(K_i)}\to \wt C_{K_i}$. We define the $n$-tuple $\ul Q_\CZ=(Q_{\CZ,1},\ldots,Q_{\CZ,n})$ of \emph{fields of definition} of $\CZ$ as the intersection of the fixed points of $G_\CZ$ in $(Q^\alg)^n$ with all the tuples of finite extensions over which a representative of $\CZ$ exists. 
\urscomment{You should make sure that the consequences of \cite[Remarks~4.6 and 4.7]{AH_Local} are true in this situation. And maybe you should mention this or even copy these two remarks.}

\item\label{DefGlobalBC_C}
We define a \emph{bound} to be an equivalence class $\CZ:=[Z_{\ul K}]$ of closed subschemes $Z_{\ul K}\subset GR_n\times_{C^n} \wt C_{\ul K}$, such that all the ind-subschemes $Z_{\ul K}$ are stable under the left $\FL_n^+\FG$-action on $GR_n$. The $n$-tuple of fields of definition $\ul Q_\CZ$ of $\CZ$ is called the \emph{$n$-tuple of reflex fields} of $\CZ$. We set $\ul C_\CZ:=\wt C_{Q_{\CZ,1}}\times_{\BF_q}\dots\times_{\BF_q}\wt C_{Q_{\CZ,n}}$.

\item\label{DefGlobalBC_D}
Let $\CZ$ be a bound in the above sense. Let $S$ be an $\BF_q$-scheme equipped with an $\BF_q$-morphism $\ul s'=(s_1',\ldots,s_n')\colon S\to \ul C_\CZ$ and let $s_i\colon S\to C$ be obtained by composing $s_i'\colon S\to \wt C_{Q_{\CZ,i}}$ with the morphism $\wt C_{Q_{\CZ,i}}\to C$. Consider an isomorphism $\tau:\CG|_{C_S\setminus \Gamma_{\ul s}}\to \CG'|_{C_S\setminus \Gamma_{\ul s}}$ defined outside the graph of the sections $s_i$. Take an \fppf cover $T\to S$ in such a way that the induced \fppf cover $\BD_{T}(\Gamma_{\ul s_T})\to\BD_{S}(\Gamma_\ul s)$ trivializes ${\wh\CG}'$. Here $\ul s_T:=(s_{T,i})$ denotes the n-tuple $(s_{T,i})$ of morphisms $s_{T,i}:T \to C$ which are obtained by composing the covering morphism $T\to S$ with the morphism $s_i$.  For existence of such trivializations see Remark \ref{RemfppfTrivialization} below. Fixing a trivialization $\wh\alpha:{\wh\CG}'\times_{\BD(\Gamma_{\ul s})} \BD_T(\Gamma_{\ul s_T})\tilde{\to} \FG \times_C\BD_T(\Gamma_{\ul s_T})$
we obtain a morphism $T\to GR_n$ which is induced by the tuple 
$$
\left(\ul s_T,\wh{\CG}:=\CG|_{\BD_T(\Gamma_{\ul s_T})},\dot{\wh\alpha} \circ \tau|_{\dot{\BD}_T({\Gamma_{\ul s_T}})}: \dot{\wh{\CG}}:=\wh\CG|_{\dot{\BD}_T({\Gamma_{\ul s_T}})}\tilde{\to}\FG \times_C \dot\BD_T({\Gamma_{\ul s_T}})\right),
$$
see Remark \ref{RemarkBeauvilleLaszlo}. We say that $\tau:\CG|_{C_S\setminus \Gamma_{\ul s}}\to \CG'|_{C_S\setminus \Gamma_{\ul s}}$ is bounded by $\CZ$ if for all representative $Z_\ul K$ of $\CZ$ over $\ul K$  the induced morphism

$$
T\times_{C_{\ul\CQ_\CZ}} \wt C_\ul K \to GR_n\times_{C^n} \wt C_\ul K
$$

factors through $Z_\ul K$. Note that since $Z_\ul K$ is invariant under the left $\FL_n^+\FG$-action, this definition is independent of the choice of the trivialization $\wh	\alpha$.

\item\label{DefGlobalBC_E}
We say that a tuple $(\CG,\CG',\ul\charsect,\tau)$ in $\left(Hecke_n(C,\FG)\times_{C^n}\ul C_\CZ\right)(S)$ is bounded by $\CZ$ if $\tau$ is bounded by $\CZ$ in the above sense. One similarly defines the boundedness condition on a  $\FG$-shtuka $\ul \CG$ in $\left(\nabla_n \scrH^1(C, \FG)\times_{C^n}\ul C_\CZ\right)(S)$. We define the stack $Hecke_n^{\CZ}(C,\FG)$ (resp. $\nabla_n^{\CZ} \scrH^1(C, \FG)$) as the stack whose points over a scheme $S$ in $Sch_{\ul C_\CZ}$ parametrize those objects of $\left(Hecke_n(C,\FG)\times_{C^n}\ul C_\CZ\right)(S)$ (resp. $(\nabla_n \scrH^1(C, \FG)\times_{C^n}\ul C_\CZ)(S)$) which are bounded by $\CZ$. Notice that these stacks naturally lie over $\ul C_{\CZ}$.

\end{enumerate}
\end{definition}

\begin{remark}\label{RemOneRepSuffices}
One can easily see that two closed ind-subschemes $Z_{\ul K_1}$ and $Z_{\ul K_2}$ are equivalent if and only if $Z_{\ul{K}_1}\times_{C_{\ul{K}_1}} \wt C_{\ul K'}=Z_{\ul{K}_2}\times_{C_{\ul{K}_2}} \wt C_{\ul{K}'}$ for \emph{every} $n$-tuple $\ul K' :=(\ul{K}_i')$ of finite extensions $Q\subseteq K_i'\subseteq Q^\alg$ containing $K_{1,i}$ and $K_{2,i}$. Moreover, for an $n$-tuple $\ul K'$ of finite extensions $K_i\subseteq K_i'\subseteq Q^\alg$ and for a scheme $S$ in $Sch_{\wt C_\ul K}$ a morphism $S \to GR_n\times_{C^n} \wt C_\ul K$ factors through $Z_\ul K$ if and only if 
$$
S\times_{\wt C_\ul K} \wt C_{\ul K'}\to GR_n\times_{C^n} \wt C_{\ul K'} 
$$
factors through $Z_{\ul K'}$.
\end{remark}

\begin{remark}[about Definition~\ref{DefGlobalBC}(b)]\label{RemReflexRing}
Let $\ul K$ be an $n$-tuple of finite extensions $K_i\subset Q^\alg$ of $Q$ over which a representative $Z_\ul K$ of $\CZ$ exists.
\begin{enumerate}
\item \label{RemReflexRing_A}
$\ul g^\ast (\CZ)=\CZ$ then means that $\ul g^\ast (Z_\ul K)\subset GR_n\times_{C^n}C_\ul K^n$ is equivalent to $Z_\ul K$. In particular, if $\ul g (\ul K)=\ul K$ (e.g. $K_i/Q$ is normal) then $\ul g^\ast (\CZ)=\CZ$ means that $\ul g^\ast (Z_\ul K)=Z_\ul K$.
\item \label{RemReflexRing_B}
It follows that $\prod_i \Aut(Q^\alg/K_i)\subset G_\CZ$.
\item \label{RemReflexRing_C}
We let $\ul K^{G_\CZ}:=\{\ul x\in \ul K\colon\ul g (\ul x)=\ul x\text{ for all }\ul g\in G_\CZ\,\}$. It equals the intersection of $\ul K$ with the $G_\CZ$-invariant $n$-tuples in $\prod_{i=1}^n Q^\alg$. 
\item \label{RemReflexRing_D}
Let $\iota(K_i):=[K_i: Q]_{\rm insep}$ be the inseparability degree. Then $\left(\sqrt[\iota(K_i)]{Q}\right)_i\subset  \ul K \subset \left((\sqrt[\iota(K_i)]{Q})^\sep \right)_i$ and thus

$$
\ul K^{G_{\CZ}}=\left(\left(\!\sqrt[\iota(K_i)]{Q}\right)^\sep\right)_i^{G_\CZ},
$$

by \ref{RemAltReflexRing}\ref{RemAltReflexRing_B}.

\item \label{RemReflexRing_E}

We choose a representative $Z_\ul K$ of $\CZ$ over some $\ul K$ such that $\prod_i\iota(K_i)$ is minimal. It follows that for each $i$ the inseparable degree $\iota(K_i)$ is minimal  \comment{I don't see why this is the case, suppose we have two representative of $\CZ$, say $Z_{\ul K'}$ over $\ul K'$ and $Z_{\ul K''}$ over $\ul K''$ with $i(K_i'')\leq i(K_i')$. How could one produce a representative over $(K_1',...,K_{i-1}',K_i'',K_{i+1}',...K_n')$} and $\ul Q_\CZ:=\ul K^{G_\CZ}$.  


\item 
We do not know whether in general $\CZ$ has a representative $Z_{\ul Q_\CZ}$ over the $n$-tuple of reflex curves $\wt C_{\ul Q_\CZ}$. Compare with the local case \cite[Remark~4.7]{AH_Local}. 
\end{enumerate}
\end{remark}

-------------------------------

ALTERNATIVE DEFINITION Global BC AND CORRESPONDING REMARKS:

-------------------------------
}

\begin{definition}\label{DefAltGlobalBC}
We fix an algebraic closure $Q^\alg$ of the function field $Q:=\BF_q(C)$ of the curve $C$. For a finite field extension $Q\subset K$ with $K\subset Q^\alg$ we consider the normalization $\wt C_K$ of $C$ in $K$. It is a smooth projective curve over $\BF_q$ together with a finite morphism $\wt C_K\to C$.

\begin{enumerate}

\item\label{DefAltGlobalBC_A} 
For a finite extension $K$ as above, we consider closed ind-subschemes $Z$ of $GR_n\times_{C^n} \wt C_K^n$.
We call two closed ind-subschemes $Z_1\subseteq GR_n\times_{C^n} \wt C_{K_1}^n$ and $Z_2\subseteq GR_n\times_{C^n} \wt C_{K_2}^n$ \emph{equivalent} if there is a finite field extension $K_1.K_2\subset K'\subset Q^\alg$ with corresponding curve $\wt C_{K'}$ finite over $\wt C_{K_1}$ and $\wt C_{K_2}$, such that $Z_1 \times_{\wt C_{K_1}^n} \wt C_{K'}^n=Z_2 \times_{\wt C_{K_2}^n} \wt C_{K'}^n$ in $GR_n\times_{C^n} C_{K'}^n$. 

\item\label{DefAltGlobalBC_B}
Let $\CZ=[Z_K]$ be an equivalence class of closed ind-subschemes $Z_K \subseteq GR_n\times_{C^n} \wt C_K^n$ and let $G_\CZ:= \{g \in \Aut(Q^\alg/Q): g^\ast(\CZ)=\CZ\}$. We define the \emph{field of definition} $Q_\CZ$ of $\CZ$ as the intersection of the fixed field of $G_\CZ$ in $Q^\alg$ with all the finite extensions over which a representative of $\CZ$ exists.

\item\label{DefAltGlobalBC_C}
We define a \emph{bound} to be an equivalence class $\CZ:=[Z_K]$ of closed subscheme $Z_K\subset GR_n\times_{C^n} \wt C_K^n$, such that all the ind-subschemes $Z_K$ are stable under the left $\FL_n^+\FG$-action on $GR_n$. The field of definition $Q_\CZ$ (resp. the curve of definition $C_\CZ:=\wt C_{Q_\CZ}$) of $\CZ$ is called the \emph{reflex field} (resp. \emph{reflex curve}) of $\CZ$. 

\item\label{DefAltGlobalBC_D}
Let $\CZ$ be a bound in the above sense. Let $S$ be an $\BF_q$-scheme equipped with an $\BF_q$-morphism $\ul s'=(s_1',\ldots,s_n')\colon S\to C_{\CZ}^n$ and let $s_i\colon S\to C$ be obtained by composing $s_i'\colon S\to C_{\CZ}$ with the morphism $C_{\CZ}\to C$. Consider an isomorphism $\tau:\CG|_{C_S\setminus \Gamma_{\ul s}}\to \CG'|_{C_S\setminus \Gamma_{\ul s}}$ defined outside the graph of the sections $s_i$. Take an \fppf cover $T\to S$ in such a way that the induced \fppf cover $\BD_{T}(\Gamma_{\ul s_T})\to\BD_{S}(\Gamma_\ul s)$ trivializes ${\wh\CG}'$. Here $\ul s_T:=(s_{T,i})$ denotes the $n$-tuple $(s_{T,i})$ of morphisms $s_{T,i}:T \to C$ which is obtained by composing the covering morphism $T\to S$ with the morphism $s_i$. For existence of such trivializations see Lemma \ref{lemfppfTrivialization} below. Fixing a trivialization $\wh\alpha:{\wh\CG}'\times_{\BD(\Gamma_{\ul s})} \BD_T(\Gamma_{\ul s_T})\tilde{\to} \FG \times_C\BD_T(\Gamma_{\ul s_T})$
we obtain a morphism $T\to GR_n$ which is induced by the tuple 
$$
\left(\ul s_T,\wh{\CG}:=\CG|_{\BD_T(\Gamma_{\ul s_T})},\dot{\wh\alpha} \circ \tau|_{\dot{\BD}_T({\Gamma_{\ul s_T}})}: \dot{\wh{\CG}}:=\wh\CG|_{\dot{\BD}_T({\Gamma_{\ul s_T}})}\tilde{\to}\FG \times_C \dot\BD_T({\Gamma_{\ul s_T}})\right);
$$  

see Remark \ref{RemarkBeauvilleLaszlo}. We say that $\tau:\CG|_{C_S\setminus \Gamma_{\ul s}}\to \CG'|_{C_S\setminus \Gamma_{\ul s}}$ is bounded by $\CZ$ if for all representative $Z_K$ of $\CZ$ over $K$  the induced morphism

$$
T\times_{C_Z^n} \wt C_K^n \to GR_n\times_{C^n} \wt C_K^n
$$

factors through $Z_K$. Note that since $Z_K$ is invariant under the left $\FL_n^+\FG$-action, this definition is independent of the choice of the trivialization $\alpha$.

\item\label{DefAltGlobalBC_E}
We say that a tuple $(\CG,\CG',(s_i),\tau)$ in $\left(Hecke_n(C,\FG)\times_{C^n}C_\CZ^n\right)(S)$ is bounded by $\CZ$ if $\tau$ is bounded by $\CZ$ in the above sense. This consequently establishes the boundedness condition on $\FG$-shtukas in $\left(\nabla_n \scrH^1(C, \FG)\times_{C^n}C_\CZ^n\right)(S)$. We denote the corresponding moduli stacks, obtained by imposing the bound $\CZ$, respectively by $Hecke_n^{\CZ}(C,\FG)$ and $\nabla_n^{\CZ} \scrH^1(C, \FG)$. These stacks naturally lie over the n-fold fiber product $C_{\CZ}^n$ of the reflex curve $C_{\CZ}$ over $\BF_q$.

\forget
{
One similarly defines the boundedness condition on a  $\FG$-shtuka $\ul \CG$ in $\left(\nabla_n \scrH^1(C, \FG)\times_{C^n}C_\CZ^n\right)(S)$. We define the stack $Hecke_n^{\CZ}(C,\FG)$ (resp. $\nabla_n^{\CZ} \scrH^1(C, \FG)$) as the stack whose points over a scheme $S$ in $Sch_{C_{\CZ}^n}$ parametrize those objects of $\left(Hecke_n(C,\FG)\times_{C^n}C_\CZ^n\right)(S)$ (resp. $(\nabla_n \scrH^1(C, \FG)\times_{C^n}C_\CZ^n)(S)$) which are bounded by $\CZ$. Notice that these stacks naturally lie over the n-fold fiber product $C_{\CZ}^n$ of the reflex curve $C_{\CZ}$ over $\BF_q$.
}

\end{enumerate}
\end{definition}

\begin{remark}\label{RemAltReflexRing}

In a similar way as explained in \cite[Remark~4.7]{AH_Local} one can compute the reflex field of $\CZ$ in the following concrete sense. We choose a finite extension $K\subset Q^\alg$ of $Q$ over which a representative $Z_K$ of $\CZ$ exists, and for which the inseparability degree $\iota(K)$ of $K$ over $Q$ is minimal. Then the reflex field $Q_\CZ$ equals $K^{G_\CZ}$ and $C_\CZ=C_{K^{G_\CZ}}$. Moreover, let $\wt K$ be the normal closure of $K$. Then $\iota(\wt K) = \iota(K)$ and therefore $\wt K$ is Galois over $Q_\CZ$ with Galois group

$$
\mbox{$\Gal (\wt K/Q_\CZ)\,=\,\{\gamma\in\Aut_{Q}(\wt K)\text{ with }\gamma(Z_{C_\wt K})=Z_{C_\wt Z}\}\,\subset\,\Aut_{Q}(\wt K)$}.
$$
We conclude that $
Q_\CZ\;=\;\bigl\{x\in{\wt K}\colon\gamma(x)=x\text{ for all }\gamma\in\Aut_Q(\wt K)\text{ with }\gamma(Z_{C_\wt K})=Z_{C_\wt K}\bigr\}\,$. As in the local situation we do not know whether in general $\CZ$ has a representative $Z_{Q_\CZ}$ over reflex curve $C_{\CZ}$.

\end{remark}
\forget
{
\begin{remark}[about Definition~\ref{DefAltGlobalBC} (b)]\label{RemAltReflexRing}
Let $K\subset Q^\alg$ be a finite extension of $Q$ over which a representative $Z_K$ of $\CZ$ exists.
\begin{enumerate}
\item \label{RemAltReflexRing_A}
$g^\ast(\CZ)=\CZ$ means that $g^\ast(Z_K)\subset GR_n\times_{C^n}\wt C_K^n$ is equivalent to $Z_K$. In particular, if $g(K)=K$ (e.g. $K/Q$ is normal) then $g^\ast(\CZ)=\CZ$ means that $g^\ast(Z_K)=Z_K$.
\item \label{RemAltReflexRing_B}
It follows that $\Aut(Q^\alg/K)\subset G_\CZ$ because all $K$-automorphisms fix $Z_K$.
\item \label{RemAltReflexRing_C}
We let $K^{G_\CZ}:=\{x\in K\colon g(x)=x\text{ for all } g\in G_\CZ\,\}$. It equals the intersection of $K$ with the fixed field of $G_\CZ$ in $Q^\alg$.
\item \label{RemAltReflexRing_D}
Let $\iota(K):=[K: Q]_{\rm insep}$ be the inseparability degree. Then $\sqrt[\iota(K)]{Q}\subset K^{G_\CZ}$ and $K^{G_{\CZ}}$ equals the fixed field $L$ of $G_\CZ$ inside the separable closure $\left(\!\sqrt[i(K)]{Q}\right)^\sep$ of $\sqrt[\iota(K)]{Q}$. To prove this we use the fact from field theory, that $\sqrt[\iota(K)]{Q}$ is contained in $K$ and that this is a separable extension. In particular $K\subset\sqrt[\iota(K)]{Q}^\sep$ and $K^{G_{\CZ}}\subset L$. From \ref{RemAltReflexRing_B} it follows that $L\subset K$, and hence $L\subset K^{G_{\CZ}}$. 
\item \label{RemAltReflexRing_E}
If $K'$ is another finite extension of $Q$ over which a representative $Z_{K'}$ of $\CZ$ exists, such that $i(K)\le i(K')$, then $K^{G_{\CZ}}\subset (K')^{G_{\CZ}}$ by \ref{RemAltReflexRing_D}. If $i(K)=i(K')$ then $K^{G_{\CZ}}=(K')^{G_{\CZ}}$.
\item \label{RemAltReflexRing_F}
We conclude from \ref{RemAltReflexRing_E} that the field of definition of $\CZ$ may be computed as follows. We choose a finite extension $K\subset Q^\alg$ of $Q$ over which a representative $Z_K$ of $\CZ$ exists, and for which $\iota(K)$ is minimal. Then the reflex field $Q_\CZ$ equals $K^{G_\CZ}$ and $C_\CZ=C_{K^{G_\CZ}}$. Moreover, let $\wt K$ be the normal closure of $K$. Then $\iota(\wt K) = \iota(K)$ and therefore $\wt K$ is Galois over $Q_\CZ$ with Galois group \mbox{$\Gal (\wt K/Q_\CZ)\,=\,\{\gamma\in\Aut_{Q}(\wt K)\text{ with }\gamma(Z_{C_\wt K})=Z_{C_\wt Z}\}\,\subset\,\Aut_{Q}(\wt K)$}. We conclude that
\[
Q_\CZ\;=\;\bigl\{x\in{\wt K}\colon\gamma(x)=x\text{ for all }\gamma\in\Aut_Q(\wt K)\text{ with }\gamma(Z_{C_\wt K})=Z_{C_\wt K}\bigr\}\,.
\]
\item 
We do not know whether in general $\CZ$ has a representative $Z_{Q_\CZ}$ over reflex curve $C_{\CZ}$. See also \cite[Remark~4.7]{AH_Local}. 
\end{enumerate}
\end{remark}
}

\begin{lemma}\label{lemfppfTrivialization}
Consider the effective relative Cartier divisor $\Gamma_\ul s$ in $C_S$. Let $\CG$ be a $\FG$-bundle over $C_S$ and set $\wh\CG:=\CG|_{\BD(\Gamma_\ul s)}$. Then there is an \fppf cover $T\to S$ such that the induced morphism $\BD_T(\Gamma_{{\ul s}_T})\to\BD(\Gamma_\ul s)$ is a trivializing cover for $\wh\CG$.
\end{lemma}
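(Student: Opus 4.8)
The plan is to trivialise $\CG$ along the divisor $\Gamma_\ul s$ itself after one \fppf base change $T\to S$, then to spread this trivialisation out over the infinitesimal neighbourhoods $\BD_{S,n}(\Gamma_\ul s)$ using smoothness of $\FG$, and finally to pass to the limit; only the first of these costs a base change. For the base change, set $D:=\Gamma_\ul s\subseteq C_S$; being a relative Cartier divisor in the proper relative curve $C_S/S$, it is finite locally free over $S$. Since $\FG$ is affine over $C$, the torsor $\CG_D:=\CG|_D$ under $\FG_D:=\FG\times_C D$ is affine over $D$ (torsors under affine group schemes are affine), and it is smooth over $D$ as well, being \fppf-locally on $D$ isomorphic to the smooth group scheme $\FG_D$ and smoothness being \fppf-local on the base.

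Now take $T:=\Res_{D/S}(\CG_D)$, the Weil restriction of $\CG_D$ along the finite locally free morphism $D\to S$. By the standard properties of Weil restriction along a finite locally free morphism, $T$ is representable by a scheme that is affine, smooth, and of finite presentation over $S$. Moreover $T\to S$ is surjective: for a geometric point $\Spec\Omega\to S$ the scheme $D\times_S\Spec\Omega$ is the spectrum of a finite $\Omega$-algebra, over which every torsor under a smooth affine group scheme is trivial (it is trivial over the residue fields, which equal $\Omega$ and are algebraically closed, and it lifts along the Artinian thickening by formal smoothness), so $T(\Spec\Omega)=\CG_D(D\times_S\Spec\Omega)\neq\emptyset$. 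Hence $T\to S$ is a smooth surjection, in particular an \fppf cover, and the tautological element $\id_T\in T(T)=\CG_D(D\times_S T)$ is a section of the torsor $\CG|_{\Gamma_{\ul s_T}}$, that is, a trivialisation of $\CG$ over $\BD_{S,1}(\Gamma_{\ul s_T})$.

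Replacing $S$ by $T$, we may assume that $\CG|_{\BD_{S,1}(\Gamma_\ul s)}$ is trivial, with a chosen section $\epsilon_1$; the rest of the argument needs no further base change. For each $n\geq1$ the ideal sheaf cutting out $\BD_{S,n}(\Gamma_\ul s)$ inside $\BD_{S,n+1}(\Gamma_\ul s)$ is the image of $\CI_{\Gamma_\ul s}^n$, hence has square zero (because $\CI_{\Gamma_\ul s}^{2n}\subseteq\CI_{\Gamma_\ul s}^{n+1}$), so $\BD_{S,n}(\Gamma_\ul s)\hookrightarrow\BD_{S,n+1}(\Gamma_\ul s)$ is a nilpotent closed immersion; and the restriction of $\CG$ to $\BD_{S,n+1}(\Gamma_\ul s)$, being a torsor under the smooth group scheme $\FG\times_C\BD_{S,n+1}(\Gamma_\ul s)$, is itself smooth over $\BD_{S,n+1}(\Gamma_\ul s)$. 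Thus, by the infinitesimal lifting criterion for smooth morphisms, any section $\epsilon_n\in\CG\bigl(\BD_{S,n}(\Gamma_\ul s)\bigr)$ extends to some $\epsilon_{n+1}\in\CG\bigl(\BD_{S,n+1}(\Gamma_\ul s)\bigr)$, and inductively we obtain a compatible system $(\epsilon_n)_{n\geq1}$. Finally, since $\CG$ is affine over $C_S$ and $\CO_{\BD_S(\Gamma_\ul s)}=\varprojlim_n\CO_{\BD_{S,n}(\Gamma_\ul s)}$, the universal property of the inverse limit gives $\CG\bigl(\BD_S(\Gamma_\ul s)\bigr)=\varprojlim_n\CG\bigl(\BD_{S,n}(\Gamma_\ul s)\bigr)$, so the $\epsilon_n$ assemble into a section $\epsilon\in\CG\bigl(\BD_S(\Gamma_\ul s)\bigr)$, and $g\mapsto\epsilon\cdot g$ defines an isomorphism $\FG\times_C\BD_S(\Gamma_\ul s)\isoto\CG|_{\BD_S(\Gamma_\ul s)}$. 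Undoing the replacement of $S$ by $T$, this is exactly a trivialisation of $\wh\CG$ along the \fppf cover $T\to S$, so that $\BD_T(\Gamma_{\ul s_T})\to\BD_S(\Gamma_\ul s)$ is a trivialising cover for $\wh\CG$, as claimed.

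The one genuinely non-formal point is the base change in the first step: the lemma requires an \fppf cover of the \emph{base} $S$, whereas a trivialisation of $\CG|_{\Gamma_\ul s}$ naturally lives over $\Gamma_\ul s$, which is only finite — not smooth — over $S$, so one cannot merely pull back a cover of $\Gamma_\ul s$. The Weil restriction $\Res_{\Gamma_\ul s/S}(\CG|_{\Gamma_\ul s})$ is precisely the universal base change over which $\CG|_{\Gamma_\ul s}$ becomes trivial, and the work consists in checking that it is an \fppf cover: this uses that Weil restriction along a finite locally free morphism preserves affineness, smoothness, and finite presentation, together with the fibrewise surjectivity check over geometric points (where one invokes triviality of torsors under smooth affine group schemes over Artinian rings with algebraically closed residue field). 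The remaining two steps — lifting through the infinitesimal neighbourhoods and passing to the limit — are formal consequences of $\FG$ being smooth and affine over $C$.
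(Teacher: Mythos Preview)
Your argument is correct and complete, but it takes a different route from the paper's. The paper starts with an \'etale covering $\CU\to\BD_S(\Gamma_{\ul s})$ trivialising $\wh\CG$ (which exists since $\FG$ is smooth), sets $T:=\Gamma_{\ul s}\times_{\BD_S(\Gamma_{\ul s})}\CU$, and then observes that the tautological map $\Gamma_{\ul s_T}\to\CU$ lifts uniquely through each nilpotent thickening $\BD_{T,n}(\Gamma_{\ul s_T})$ by formal \emph{\'etaleness} of $\CU\to\BD_S(\Gamma_{\ul s})$, hence to $\BD_T(\Gamma_{\ul s_T})\to\CU$; since $\wh\CG|_{\CU}$ is trivial, so is its pullback to $\BD_T(\Gamma_{\ul s_T})$. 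You instead take $T=\Res_{\Gamma_{\ul s}/S}(\CG|_{\Gamma_{\ul s}})$, trivialising only along $\Gamma_{\ul s}$, and then extend the section step by step via formal \emph{smoothness} of $\CG$.

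The trade-off is this. The paper's use of \'etaleness makes the lift unique, so local lifts glue automatically and no affineness hypothesis on $\BD_{T,n}$ is needed. Your smoothness-based lifting $\CG(\BD_{T,n+1})\onto\CG(\BD_{T,n})$ is, strictly speaking, only guaranteed by the infinitesimal criterion when $\BD_{T,n+1}(\Gamma_{\ul s_T})$ is affine---equivalently when $T$ is affine---so you should either reduce to affine $S$ at the outset (then $T$, being affine over $S$, is affine) or note that a further Zariski refinement of $T$ still gives an \fppf cover of $S$. This is harmless for the lemma as stated, but worth flagging. On the other hand, your $T$ is smooth over $S$, whereas the paper's $T$ is only \'etale over $\Gamma_{\ul s}$ and hence merely quasi-finite flat over $S$; and your argument makes explicit that the Weil restriction is the universal base change trivialising the torsor along the divisor, which is conceptually pleasant.
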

\begin{proof}
Let $\CU\to \BD(\Gamma_\ul s)$ be an \'etale covering that trivializes $\wh \CG'$. Consider the closed immersion $\Gamma_\ul s \to \BD_S(\Gamma_\ul s)$ and set $T:=\Gamma_\ul s \times_{\BD(\Gamma_\ul s)} \CU$. 
To see that $T\to S$ is the desired covering notice that the closed immersion $\Gamma_{\ul s_T}\to \BD_{T,n}(\Gamma_{\ul s_T})$ is defined by a nilpotent sheaf of ideal and moreover $\CU \to \BD_S(\Gamma_\ul S)$ is \'etale, hence the natural morphism 
$\Gamma_{\ul s_T} \to \CU$ 
lifts to a morphism $\BD_{T,n}(\Gamma_{\ul s_T})\to \CU$ and consequently to $\BD_{T}(\Gamma_{\ul s_T})\to \CU$. 
\end{proof}
\forget
{
\begin{remark}[about Definition~\ref{DefAltGlobalBC}(d)]\label{RemfppfTrivialization}
Consider the effective relative Cartier divisor $\Gamma_\ul s$ in $C_S$. Let $\CU\to \BD(\Gamma_\ul s)$ be an \'etale covering that trivializes $\wh \CG'$. Consider the closed immersion $\Gamma_\ul s \to \BD_S(\Gamma_\ul s)$ and set $T:=\Gamma_\ul s \times_{\BD(\Gamma_\ul s)} \CU$. 
To see that $T\to S$ is the desired covering notice that the closed immersion $\Gamma_{\ul s_T}\to \BD_{T,n}(\Gamma_{\ul s_T})$ is defined by a nilpotent sheaf of ideal and moreover $\CU \to \BD_S(\Gamma_\ul S)$ is \'etale, hence the natural morphism 
$\Gamma_{\ul s_T} \to \CU$ 
lifts to a morphism $\BD_{T,n}(\Gamma_{\ul s_T})\to \CU$ and consequently to $\BD_{T}(\Gamma_{\ul s_T})\to \CU$.   
\end{remark}

}

\begin{theorem}\label{nHisDM}
Let $D$ be a proper closed subscheme of $C$ and set $D_\CZ:=D\times_C C_\CZ$. The stack $\nabla_n^{\CZ} \scrH_D^1(C, \FG)$ is a Deligne-Mumford stack locally of finite type and separated over $(C_\CZ\setminus D_\CZ)^n$. It is relatively representable over $\scrH^1(C,\FG)\times_{\BF_q} (C_\CZ\setminus D_\CZ)^n$ by a separated morphism of finite type. 
\end{theorem}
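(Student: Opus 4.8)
The plan is to exhibit $\nabla_n^{\CZ}\scrH_D^1(C,\FG)$ as a \emph{closed} substack of $\nabla_n^{\ul\omega}\scrH_D^1(C,\FG)\times_{C^n}C_\CZ^n$ for a suitably large coweight tuple $\ul\omega$, and then to read off the statement from Theorem~\ref{ThmnHisArtin}: a closed substack of a Deligne--Mumford stack which is locally of finite type and separated over a base $B$ is again Deligne--Mumford, locally of finite type and separated over $B$, and a closed immersion is in particular relatively representable by a separated morphism of finite type. As a preliminary I would reduce to $D=\emptyset$: by Theorem~\ref{ThmnHisArtin} the forgetful morphism $\nabla_n\scrH_D^1(C,\FG)\to\nabla_n\scrH^1(C,\FG)\times_{C^n}(C\setminus D)^n$ is a torsor under the finite group $\FG(D)$, and base-changing along the finite morphism $C_\CZ\to C$ and restricting the bound $\CZ$ gives the analogous statement for the bounded stacks; since a torsor under a finite group over a stack with the listed properties again has those properties, it is enough to treat $\nabla_n^{\CZ}\scrH^1(C,\FG)$.

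The heart of the argument is to show that boundedness by $\CZ$ is a closed condition, so that $Hecke_n^{\CZ}(C,\FG)$ — and hence, by the equalizer presentation in Definition~\ref{Global Sht}, also $\nabla_n^{\CZ}\scrH^1(C,\FG)$ — is a closed substack of $Hecke_n(C,\FG)\times_{C^n}C_\CZ^n$. Given $(\CG,\CG',\ul s,\tau)$ over an $\BF_q$-scheme $S$ with $\ul s\in C_\CZ^n(S)$, I would follow Definition~\ref{DefAltGlobalBC}(d): by Lemma~\ref{lemfppfTrivialization} choose an \fppf cover $T\to S$ trivialising $\wh\CG'$ along $\Gamma_{\ul s_T}$ and, via Remark~\ref{RemarkBeauvilleLaszlo}, obtain a morphism $T\to GR_n\times_{C^n}C_\CZ^n$. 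For a representative $Z_K$ of $\CZ$, the preimage of $Z_K$ under $T\times_{C_\CZ^n}\wt C_K^n\to GR_n\times_{C^n}\wt C_K^n$ is a closed subscheme $W$ of $T\times_{C_\CZ^n}\wt C_K^n$, and the locus on $T$ over which this morphism factors through $Z_K$ — equivalently, over which $W$ is all of $T\times_{C_\CZ^n}\wt C_K^n$ — is represented by a closed subscheme $T_K\subseteq T$; here one uses that $\wt C_K\to C_\CZ=\wt C_{Q_\CZ}$, being a finite morphism of smooth projective curves, is finite flat, so that $T\times_{C_\CZ^n}\wt C_K^n\to T$ is finite flat and closed immersions descend along it. Since $Z_K$ is stable under the left $\FL_n^+\FG$-action on $GR_n$, the subscheme $T_K$ does not depend on the chosen trivialisation and is unchanged under refinement of the \fppf cover; hence the various $T_K$ glue and, by \fppf descent of closed substacks, descend to the desired closed substack of $Hecke_n(C,\FG)\times_{C^n}C_\CZ^n$. (One could argue equivalently through Proposition~\ref{GR-Hecke}: \'etale-locally on $C^n\times\scrH^1(C,\FG)$ the stack $Hecke_n(C,\FG)$ is modelled on $GR_n$, and the $\FL_n^+\FG$-invariant closed subscheme $Z_K$ glues to a closed substack in these charts.)

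Next I would use that a bound $\CZ$ is represented by a \emph{quasi-compact} closed subscheme $Z_K$ — this is what distinguishes a ``closed subscheme'' in Definition~\ref{DefAltGlobalBC}(c) from the ``closed ind-subschemes'' of part (a). Being quasi-compact, $Z_K$ is contained in the closed piece of $GR_n\times_{C^n}\wt C_K^n$ bounded by the tuple $\ul\omega$ of coweights $\omega_i=(d,0,\dots,0,-d)$, $d\gg0$; hence $Hecke_n^{\CZ}(C,\FG)$ is a closed substack of $Hecke_n^{\ul\omega}(C,\FG)\times_{C^n}C_\CZ^n$, and, taking the equalizer with the graph of the Frobenius on $\scrH^1(C,\FG)$, $\nabla_n^{\CZ}\scrH^1(C,\FG)$ is a closed substack of $\nabla_n^{\ul\omega}\scrH^1(C,\FG)\times_{C^n}C_\CZ^n$. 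Together with the reduction to $D=\emptyset$, this and Theorem~\ref{ThmnHisArtin} show that $\nabla_n^{\CZ}\scrH_D^1(C,\FG)$ is Deligne--Mumford, locally of finite type and separated over $(C_\CZ\setminus D_\CZ)^n$; by the cancellation property, the latter separatedness forces the morphism $\nabla_n^{\CZ}\scrH_D^1(C,\FG)\to\scrH^1(C,\FG)\times_{\BF_q}(C_\CZ\setminus D_\CZ)^n$ to be separated as well. That this morphism is representable and of finite type I would check by fixing $(\CG,\ul s)$ in $\bigl(\scrH^1(C,\FG)\times_{\BF_q}(C_\CZ\setminus D_\CZ)^n\bigr)(S)$: since $n\ge1$ the scheme $C_S\setminus\Gamma_{\ul s}$ is affine over $S$, the functor of isomorphisms $\tau\colon\s\CG|_{C_S\setminus\Gamma_{\ul s}}\isoto\CG|_{C_S\setminus\Gamma_{\ul s}}$ is representable, and by the second paragraph the condition ``$\tau$ bounded by $\CZ$'' carves out a closed subscheme of it which, $Z_K$ being quasi-compact, is of finite type; re-introducing the finite $\FG(D)$-torsor preserves this.

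The step I expect to be the main obstacle is the closedness of the boundedness condition and its descent, carried out in the second paragraph: one must verify carefully that ``factoring through $Z_K$'' is insensitive to the auxiliary \fppf trivialisation of $\wh\CG'$ (which is exactly where the $\FL_n^+\FG$-invariance of $Z_K$ enters) and that it descends from the level of $\wt C_K^n$ back down to $C_\CZ^n$ (which is where finite flatness of $\wt C_K\to C_\CZ$ enters), so that the locally defined closed subschemes $T_K$ genuinely glue to a closed substack rather than merely to a constructible one.
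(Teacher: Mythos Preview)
Your approach is essentially the same as the paper's: reduce to $D=\emptyset$ via the finite \'etale forgetful morphism of Theorem~\ref{ThmnHisArtin}, show that the bounded locus is a closed substack by choosing an \fppf\ trivialisation of $\wh\CG'$ as in Definition~\ref{DefAltGlobalBC}\ref{DefAltGlobalBC_D}, pulling back $Z_K$, and descending using the $\FL_n^+\FG$-invariance of $Z_K$, and then conclude from Theorem~\ref{ThmnHisArtin}. The paper's proof is in fact terser than yours; in particular it simply asserts the descent and then says ``we may now conclude by Theorem~\ref{ThmnHisArtin}'', whereas you spell out the additional step of using that $Z_K$ is a closed \emph{subscheme} (not ind-subscheme) to land in some $\nabla_n^{\ul\omega}\scrH_D^1(C,\FG)$ and hence in an honest Deligne--Mumford stack rather than an ind-stack --- this is genuinely needed and only implicit in the paper. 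Your separate treatment of the relative representability over $\scrH^1(C,\FG)\times_{\BF_q}(C_\CZ\setminus D_\CZ)^n$ is also more explicit than the paper's. One small refinement for the descent step you flag as the main obstacle: to see that the pulled-back closed subscheme $W\subseteq T\times_{C_\CZ^n}\wt C_K^n$ really descends all the way to $S$ (not just to $T$), it is cleanest to first replace $K$ by its normal closure $\wt K$ over $Q_\CZ$, so that $\wt C_{\wt K}\to C_\CZ$ is Galois and, by Remark~\ref{RemAltReflexRing}, every $\gamma\in\Gal(\wt K/Q_\CZ)$ satisfies $\gamma^*Z_{\wt K}=Z_{\wt K}$; together with the $\FL_n^+\FG$-invariance this gives the required agreement of the two pullbacks of $W$ to $T'\times_S T'$.
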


\begin{proof}
The forgetful morphism $\nabla_n^{\CZ} \scrH_D^1(C, \FG) \to \nabla_n^{\CZ} \scrH^1(C, \FG) \times_{C_\CZ^n} (C_\CZ\setminus D_\CZ)^n$ is relatively representable by a finite \'etale surjective morphism, see \ref{ThmnHisArtin}. Thus it suffices to prove the statement for $\nabla_n^{\CZ} \scrH^1(C, \FG)$.

Let $S$ be a scheme over $C_\CZ^n$ and let $Z_{C_K}$ be a representative of the bound $\CZ$ for a finite extension $Q\subseteq K\subset Q^\alg$. As in Definition \ref{DefAltGlobalBC}\ref{DefAltGlobalBC_D}, we consider a trivialization of $\wh\CG'$ over an \fppf covering $T\to S$. Recall that this induces a morphism $T\times_{C_Z^n} C_K^n \to GR_n\times_{C^n} C_K^n$. By definition of boundedness condition \ref{DefAltGlobalBC}\ref{DefAltGlobalBC_C}, the
closed subscheme $T\times_{GR_n\times_{C^n} C_K^n} \CZ$ descends to a closed subscheme of $S$.
Consequently $\nabla_n^{\CZ} \scrH^1(C, \FG)$ is a closed substack of $\nabla_n \scrH^1(C, \FG)$ and we may now conclude by Theorem \ref{ThmnHisArtin}.

\end{proof}

\subsection{The Local Model Theorem For $\nabla_n^\CZ\scrH_D^1(C,\FG)$}

The following theorem asserts that global affine Grassmannians (resp. global Schubert varieties inside global affine Grassmannians ) may be regarded as local models for the moduli stacks of global $\FG$-shtukas (resp.  bounded global $\FG$-shtukas). Here $\FG$ is a smooth affine group scheme over $C$. Notice that when $\FG$ is constant, i.e.  $\FG=G_0\times_{\BF_q}C$ for a split reductive group $G_0$ over $\BF_q$, this observation was first recorded by Varshavsky~\cite{Var}. His proof relies on the well-known theorem of Drinfeld and Simpson \cite{DS95} which assures that a $G_0$-bundle over a relative curve over $S$ is Zariski-locally trivial after suitable \'etale base change $S'\to S$.  Note however that this essentially follows from their argument about existence of $B$-structure on $G_0$-bundles, for a Borel subgroup $B$. Hence, one should not hope to implement this result to treat the general case, which we are mainly interested to study in this article. Namely when $\FG$ ramifies at certain places of $C$ (i.e. when $\FG$ is a parahoric group scheme over $C$). Accordingly, in the proof of the following theorem, we modify Varshavsky's method and provide a proof which is independent of the theorem of Drinfeld and Simpson. In the course of the proof we will see that it suffices to assume that $\FG$ is smooth affine group scheme over $C$. Additionally, we will evidently see that it is not possible to weaken this assumption any further.

\begin{theorem}\label{ThmLocalModelI}
Let $\FG$ be a smooth affine group scheme of finite type over $C$ and let $D$ be a proper closed subsecheme of $C$. For any point $y$ in $\nabla_n\scrH_D^1(C,\FG)$ there exist an ind-\'etale neighborhood $U_y$ of $y$ and a roof
\[ 
\xygraph{
!{<0cm,0cm>;<1cm,0cm>:<0cm,1cm>::}
!{(0,0) }*+{U_y}="a"
!{(-1.5,-1.5) }*+{\nabla_n\scrH_D^1(C,\FG)}="b"
!{(1.5,-1.5) }*+{GR_n,}="c"
"a":^{\text{\'et}}"b" "a":_{\text{\'et}}"c"
}  
\]
of ind-\'etale morphisms. In other words the global affine Grassmannian $GR_n$ is a local model for the moduli stack  $\nabla_n\scrH_D^1(C,\FG)$ of global $\FG$-shtukas. Furthermore for a global bound  $\CZ$ as in Definition \ref{DefAltGlobalBC}, with a representative $Z_K\subset GR_n\times_{C^n} C_K^n$ over finite extension $Q\subset K$, \forget{\comment{or may be a global bound as in Definition~\ref{DefGlobalBC} with a representative  $Z_\ul K\subset GR_n\times_{C^n} C_\ul K$}} the pull back of the above diagram induces a roof of \'etale morphisms between $\nabla^{\CZ}_n\scrH_D^1(C,\FG)\times_{C_\CZ^n}C_K^n$ and $Z_K$. \forget{\comment{or may be  $\nabla^{\CZ}_n\scrH_D^1(C,\FG)\times_{C_\CZ^n}C_\ul K^n$ and $Z_\ul K$.} }  
\end{theorem}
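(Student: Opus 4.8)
The plan is to deduce the statement from Proposition~\ref{GR-Hecke}, which exhibits $Hecke_n(C,\FG)$ étale-locally over $C^n\times\scrH^1(C,\FG)$ as $GR_n\times\scrH^1(C,\FG)$, together with the fact (Remark~\ref{RemBun_G}) that $\scrH^1(C,\FG)$ is a \emph{smooth} Artin stack over $\BF_q$. The smoothness is exactly where the hypothesis ``$\FG$ smooth'' enters, and it is also the reason the argument needs neither the Drinfeld--Simpson theorem nor its consequences: Proposition~\ref{GR-Hecke} only uses an \emph{étale}-local trivialization of the universal torsor over $C\times\scrH^1(C,\FG)$, which exists for any smooth affine $\FG$ since a torsor trivializes over itself. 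First I would reduce to $D=\emptyset$: by Theorem~\ref{ThmnHisArtin} the forgetful morphism $\nabla_n\scrH_D^1(C,\FG)\to\nabla_n\scrH^1(C,\FG)\times_{C^n}(C\setminus D)^n$ is a finite étale torsor under $\FG(D)$, so a roof for $D=\emptyset$ pulls back to one for arbitrary $D$ (a composite of a finite étale and an étale morphism is étale).

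Write $\scrH^1:=\scrH^1(C,\FG)$ and recall from Definition~\ref{Global Sht} that $\nabla_n\scrH^1$ is the fibre product of $Hecke_n(C,\FG)$ with $\scrH^1$ along the graph $\Gamma_\sigma\colon\scrH^1\to\scrH^1\times\scrH^1$ of the $\BF_q$-Frobenius $\sigma=\sigma_{\scrH^1}$, via the two projections $(pr_1,pr_2)\colon Hecke_n(C,\FG)\to\scrH^1\times\scrH^1$. Applying Proposition~\ref{GR-Hecke} (level by level in the ind-structures) yields a surjective ind-étale morphism $W\to C^n\times\scrH^1$ and an isomorphism $\Theta\colon Hecke_n(C,\FG)\times_{C^n\times\scrH^1}W\isoto GR_n\times_{C^n}W$ over $W$; by the Beauville--Laszlo description (Remark~\ref{RemarkBeauvilleLaszlo}) one may arrange $W$ so that it trivializes the universal bundle ``$\CG'=pr_2$'' along the graph of the universal sections. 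Under $\Theta$ the projection $pr_2$ becomes the structure map $b\colon GR_n\times_{C^n}W\to\scrH^1$ supplied by $W$ (which is fibrewise over $GR_n$ an étale morphism to $\scrH^1$), while $pr_1$ becomes the map $a\colon GR_n\times_{C^n}W\to\scrH^1$ sending a point to the bundle obtained from $b$ by the Beauville--Laszlo modification at the legs prescribed by the $GR_n$-coordinate. Setting
\[
U_y\ :=\ \equi\bigl(\sigma\circ a,\ b\colon GR_n\times_{C^n}W\rightrightarrows\scrH^1\bigr)
\]
(and, if one wants a genuine neighbourhood, restricting to a component over a preimage of $y$), the obvious base-change square identifies $U_y$ with $\nabla_n\scrH^1\times_{C^n\times\scrH^1}W$, so the leg $U_y\to\nabla_n\scrH^1$ is ind-étale, being a base change of $W\to C^n\times\scrH^1$.

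\noindent\textbf{The étale leg to $GR_n$ — the main point.}
It remains to prove that the other leg $U_y\hookrightarrow GR_n\times_{C^n}W\to GR_n$ is ind-étale. Since $\scrH^1$ is smooth over $\BF_q$, the morphism $W\to C^n$ is smooth, hence $GR_n\times_{C^n}W\to GR_n$ is smooth, and $U_y$ is cut out in it by $\dim\scrH^1$ equations (étale-locally the equations of the regular immersion $\Gamma_\sigma\hookrightarrow\scrH^1\times\scrH^1$, pulled back along $(\sigma\circ a,b)$). It therefore suffices to show that $U_y\to GR_n$ is flat with étale geometric fibres. For a geometric point $x$ of $GR_n$ the fibre $U_{y,x}$ is the preimage of the diagonal under $(\sigma\circ a|,\,b|)\colon W_{c(x)}\to\scrH^1\times\scrH^1$, where $W_{c(x)}$ is the fibre of $W\to C^n$, a smooth $\kappa(x)$-scheme; here $b|$ is étale, while $\sigma\circ a|$ has \emph{vanishing differential} because $\sigma$ is a $q$-power Frobenius. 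Hence $(\sigma\circ a|,\,b|)$ is transverse to the diagonal, $U_{y,x}$ is smooth over $\kappa(x)$ of relative dimension $0$, i.e.\ étale, and in particular the $\dim\scrH^1$ defining equations form a regular sequence on every fibre of $GR_n\times_{C^n}W\to GR_n$; flatness of $U_y\to GR_n$ follows. Assembling the levels $\ul\omega$ gives the ind-étale roof, and hence the first two assertions. This step is where I expect the real work to lie: the whole construction turns on the interaction between the vanishing of the Frobenius differential and the smoothness of $\scrH^1(C,\FG)$, which is also where one sees that the hypothesis on $\FG$ cannot be relaxed (if $\scrH^1(C,\FG)$ failed to be smooth, the transversality argument would break down).

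\noindent\textbf{The bounded version.}
For the last assertion, base change the roof along $C_K^n\to C^n$ — which factors through $C_\CZ^n$ because $Q_\CZ\subseteq K$ — to obtain ind-étale morphisms $U_y\times_{C^n}C_K^n\to\nabla_n\scrH_D^1(C,\FG)\times_{C^n}C_K^n$ and $U_y\times_{C^n}C_K^n\to GR_n\times_{C^n}C_K^n$. By (the proof of) Theorem~\ref{nHisDM}, $\nabla_n^\CZ\scrH_D^1(C,\FG)\times_{C_\CZ^n}C_K^n$ is a closed substack of $\nabla_n\scrH_D^1(C,\FG)\times_{C^n}C_K^n$, and $Z_K$ is a closed sub-ind-scheme of $GR_n\times_{C^n}C_K^n$; the one thing to verify is that their preimages in $U_y\times_{C^n}C_K^n$ coincide. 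This is precisely the compatibility of the $GR_n$-valued point produced by $\Theta$ with the one used to define ``bounded by $\CZ$'' in Definition~\ref{DefAltGlobalBC}\ref{DefAltGlobalBC_D}: both are obtained by restricting the shtuka to the formal disc around the legs, trivializing $\CG'=\s\CG$ there, and recording the relative position, and the dependence on the choice of trivialization is immaterial since $Z_K$ is stable under the left $\FL_n^+\FG$-action. Granting this, the common preimage maps étale onto $\nabla_n^\CZ\scrH_D^1(C,\FG)\times_{C_\CZ^n}C_K^n$ (base change of an étale morphism along a closed immersion) and étale onto $Z_K$ (likewise), which is the desired roof of étale morphisms.
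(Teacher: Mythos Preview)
Your proposal is correct and follows essentially the same route as the paper: reduce to $D=\emptyset$ via Theorem~\ref{ThmnHisArtin}, use Proposition~\ref{GR-Hecke} to identify $Hecke_n$ and $GR_n\times\scrH^1$ over an \'etale cover of $C^n\times\scrH^1$, and then show the induced map to $GR_n$ is \'etale by exploiting that $\sigma$ has vanishing differential while the other projection is \'etale onto the smooth stack $\scrH^1$.

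The only substantive difference is in packaging the key step. The paper first passes to an open finite-type substack $\scrH_\alpha^1$, adds enough $D$-level structure so that $\scrH_{\alpha,D}^1$ admits a scheme presentation $H_{\alpha,D}^1$, and then isolates a clean scheme-theoretic Lemma (Lemma~\ref{etaleness}) proved by a direct Jacobi-criterion computation on the equations $\tilde g^\ast(z_j)-\tilde f^\ast(z_j)^q$. You instead run a transversality/fibrewise-regular-sequence argument directly at the stack level. Your argument is morally the same, but the phrase ``the equations of the regular immersion $\Gamma_\sigma\hookrightarrow\scrH^1\times\scrH^1$'' is not literally correct: the diagonal of the Artin stack $\scrH^1$ is not a closed immersion (objects have nontrivial automorphisms), so to make your ``cut out by $\dim\scrH^1$ equations'' precise you must first pass to a smooth atlas --- which is exactly the reduction the paper performs. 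Once that is done, your transversality argument and the paper's Jacobi-criterion argument are interchangeable. Your treatment of the bounded version agrees with the paper's.
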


\begin{proof}[Proof of Theorem~\ref{ThmLocalModelI}.]
Regarding Theorem \ref{ThmnHisArtin}, we may ignore the $D$-level structure. Since the curve $C$, the parahoric group $\FG$ and the index $n$ (which stands for the number of characteristic sections) are fixed, we drop them from the notation and simply write $\scrH^1=\scrH^1(C,\FG)$, $Hecke=Hecke_n(C,\FG)$ and $\nabla\scrH^1=\nabla_n\scrH^1(C,\FG)$. \\
Let $y'$ be the image of $y$ in $C^n\times\scrH^1$ under the projection sending $(\CG,\CG',\ul s,\tauGlob)$ to $(\ul s,\CG')$.\\ 
According to Proposition~\ref{GR-Hecke}, we may take an \'etale neighborhood $U\to C^n\times \scrH^1$ of $y'$, such that the restriction $U'$ of $Hecke$ to $U$ and the restriction $U''$ of $GR_n \times \scrH^1$ to $U$ become isomorphic. Now, set $\wt U_y:=U' \times_{Hecke} \nabla\scrH^1$ and consider the following commutative diagram

\[ \xygraph{
!{<0cm,0cm>;<1cm,0cm>:<0cm,1cm>::}
!{(.75,0)}*+{Hecke}="a"
!{(-2.25,0)}*+{U'}="b"
!{(.75,-2.25)}*+{C^n\times \scrH^1}="c"
!{(-2.25,-2.25) }*+{U}="d"
!{(-.75,-3)}*+{GR_n\times \scrH^1}="e"
!{(-3.75,-3) }*+{U''}="f"
!{(-.75,-4.5) }*+{GR_n}="h"
!{(-1,-1) }*+{\nabla\scrH^1}="i"
!{(-5.25,-1) }*+{\wt U_y}="j"
"b":^{\et}"a"
"d":^{\et}"c"
"f":^{\et}"e"
"a":"c"
"b":"d"
"e":_{pr_1}"h"
"e":"c"
"f":"d"
"b":^{\cong}"f"
"i":"c"
"j":^{\et\qquad\quad}"i"
"i":"a"
"j":"b"
"j":"d"
"j":@/_4em/_{\phi'}"h"
}  
\]

We pick an open substack of finite type $\scrH_\alpha^1$ that contains the image of $y$ under projection to $\scrH^1$; see Remark~\ref{RemBun_G}. 
After restricting the above diagram to $\scrH_\alpha^1$ and imposing a $D$-level structure, we may obtain the following diagram

\[ 
\xygraph{
!{<0cm,0cm>;<1cm,0cm>:<0cm,1cm>::}
!{(.75,0) }*+{GR_n.}="a"
!{(-2.25,0) }*+{GR_n\times \scrH_{\alpha,D}^1}="b"
!{(-6,0) }*+{U_{\alpha,D}'}="c"
!{(-4.5,0) }*+{U_{\alpha,D}''}="c'"
!{(-6,2.25) }*+{\wt U_{y,_\alpha,D}}="d"
!{(-6,-1.5) }*+{\scrH_{\alpha,D}^1}="e"
!{(-4.5,-1.5) }*+{\scrH_{\alpha,D}^1}="e'"
!{(-7.5,0) }*+{\nabla\scrH_{\alpha,D}^1}="l"
"d":"c":^{\cong}"c'":^{\et\qquad}"b":^{pr_1}"a"
"d":^{\phi'}"a"
"c":_{f}@/_1em/"e"
"c'":^g@/_-1em/"e'"
"e":^{\sigma_{\scrH_\alpha^1}}"e'"
"d":"l"
} 
\]
Here $f$ is the morphism induced by the projection $\pi\colon Hecke\to \scrH^1$ sending $(\CG,\CG',\ul s,\tauGlob)$ to $\CG$ and $g$ is induced by $U''\to GR_n\times \scrH^1$ followed by the projection. 
We may take $D$ enough big such that $\scrH_{\alpha,D}^1$ admits an \'etale presentation $H_{\alpha,D}^1 \to \scrH_{\alpha,D}^1$; see Remark \ref{RemBun_G} and \cite[Lemma~8.3.9, 8.3.10 and 8.3.11]{Beh}. In addition, according to Theorem \ref{ThmnHisArtin} $\nabla\scrH_{\alpha,D}^1 \to \nabla\scrH_\alpha^1$ is \'etale. Consequently, we may replace $\scrH_{\alpha,D}^1$ by the scheme $H_{\alpha,D}^1$. Then define $U_y:=\wt U_{y,\alpha,D}\times_{\scrH_{\alpha,D}^1}H_{\alpha,D}^1$, the theorem now follows from Remarks \ref{RemBun_G} and \ref{RemGlobAffGrass} and the lemma below.

\end{proof}

\begin{lemma}\label{etaleness}
Let $W$, $T$, $Y$ and $Z$ be schemes locally of finite type over $\BF_q$ and let $Z$ be smooth. Assume that we have a morphism $f\colon W \to Z$, and \'etale morphisms $\iota\colon  W\to T\times_{\BF_q}Y$ and $\phi\colon  Y\to Z$. 
Let $g\colon W\to Z$ denote the morphism $\phi \circ pr_2 \circ \iota$, where  $pr_2\colon T\times_{\BF_q}Y\to Y$ is the projection to the second factor. Consider the following diagram
\[
\xymatrix @C+2pc {
V \ar[d] \ar[drr] \\
W \ar[r]_{\iota\quad} \ar@/_/[d]_{\sigma_Z\circ f} \ar@/^/[d]^g & T\times_{\BF_q}Y \ar[r]_{\quad pr_1} & T \\
Z
}
\]
where $V:=\equi( \sigma_Z \circ f, g\colon  W \rightrightarrows Z)$; see Definition~\ref{ker}. Then the induced morphism $V\to T$ is \'etale.
\end{lemma}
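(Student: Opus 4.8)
The plan is to reduce the étaleness of $V\to T$ to the étaleness of $\iota$ by identifying $V$ with a fiber product of étale maps over $T$. First I would unwind the definition of the equalizer: $V=\equi(\sigma_Z\circ f,\,g\colon W\rightrightarrows Z)$ sits in the cartesian square
\[
\xymatrix{
V \ar[r]\ar[d] & W \ar[d]^{(\sigma_Z\circ f,\,g)}\\
Z \ar[r]_-{\Delta_{Z/\BF_q}} & Z\times_{\BF_q}Z.
}
\]
Since $f=$ (anything) and $g=\phi\circ pr_2\circ\iota$, I want to rewrite the map $(\sigma_Z\circ f,g)\colon W\to Z\times_{\BF_q}Z$ through the étale map $\iota$. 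Concretely, consider on $T\times_{\BF_q}Y$ the two morphisms to $Z\times_{\BF_q}Z$: on the second coordinate use $\phi\circ pr_2$, and on the first coordinate I need a morphism $T\times_{\BF_q}Y\to Z$ whose composition with $\iota$ is $\sigma_Z\circ f$ — but $\sigma_Z\circ f$ need not factor through $\iota$ in general. So the cleaner route is: form $V':=\equi(\sigma_Z\circ f,\,g)$ directly as above, and observe that $V\to W$ is a closed immersion when $Z$ is separated — actually $Z$ is smooth over $\BF_q$ hence the diagonal is an immersion, so $V\to W$ is an immersion; but this alone does not give étaleness of $V\to T$.

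The key idea I would actually pursue is to use smoothness of $Z$ to show $V\to T$ is étale by a base-change argument using the graph of Frobenius. Since $Z$ is smooth of finite type over $\BF_q$, the relative Frobenius considerations give that the map $Z\xrightarrow{(\sigma_Z,\id)} Z\times_{\BF_q}Z$ is... not what we want either. Let me instead take the honest approach: I claim $V\to T$ factors as $V\to W\to T\times_{\BF_q}Y\to T$ and analyze each. We have $V\to W$ is the pullback of $\Delta_{Z}$, and $W\xrightarrow{\iota} T\times_{\BF_q}Y$ is étale; so it suffices to show that the composite $V\to T\times_{\BF_q}Y\xrightarrow{pr_1} T$ is étale. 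Now $V\to T\times_{\BF_q}Y$ is the pullback along $\iota$ of a closed (more precisely, immersion) condition, but crucially: consider the morphism $h\colon T\times_{\BF_q}Y\to Z\times_{\BF_q}Z$ given by $(t,y)\mapsto(\text{?},\phi(y))$. The first slot is the obstruction. The resolution is that $V$ only sees points of $W$, and on $W$ we are equalizing $\sigma_Z\circ f$ with $g=\phi\circ pr_2\circ\iota$. Write $\iota=(\iota_1,\iota_2)\colon W\to T\times_{\BF_q} Y$. Then $g=\phi\circ\iota_2$, while $\sigma_Z\circ f$ is some map $W\to Z$. Thus $V=\equi(\sigma_Z\circ f,\,\phi\circ\iota_2)$, and I would present $V$ as the fiber product $W\times_{(\sigma_Z\circ f,\,\phi\circ\iota_2),\,Z\times Z,\,\Delta}Z$, i.e. $V=W\times_{\phi\circ\iota_2,\,Z}Z\times_{Z,\sigma_Z\circ f}W$ collapsed appropriately — equivalently $V$ is the locus in $W$ where $\sigma_Z\circ f=\phi\circ\iota_2$.

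Here is the step I expect to be the main obstacle, and how I would handle it. I want to factor this equalizer through the étale map $\phi$. Form $\wt V:=W\times_{\sigma_Z\circ f,\,Z,\,\phi}Y$; since $\phi$ is étale, $\wt V\to W$ is étale. There is a natural map $V\to\wt V$ sending a point $w$ with $\sigma_Zf(w)=\phi\iota_2(w)$ to $(w,\iota_2(w))$. I would show $V\to\wt V$ is an open and closed immersion: it is the equalizer of the two maps $\wt V\rightrightarrows Y$ given by $pr_2$ and by $\iota_2\circ pr_1$, both of which are sections-compatible, and since $Y\to\Spec\BF_q$ need not be separated one instead argues via the étale diagonal of $\phi$ — the diagonal $Y\to Y\times_Z Y$ is an open immersion because $\phi$ is étale, and $V=\wt V\times_{(pr_2,\iota_2\circ pr_1),\,Y\times_Z Y}Y$, hence $V\to\wt V$ is an open immersion. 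Therefore $V\to W$ is étale (composite of an open immersion and an étale map). Finally, combine: $V\to W$ étale, $W\xrightarrow{\iota}T\times_{\BF_q}Y$ étale, $T\times_{\BF_q}Y\xrightarrow{pr_1}T$ smooth — that is not enough. So instead I use: the problem is symmetric, and I should run the same construction factoring through $\iota_1$ on the $T$-side. Concretely, after replacing $W$ by the étale cover $\wt V$ I may assume $g$ (hence the equalizer condition) is pulled back from $Y$ via $\phi$, and then $V\to T$ becomes the pullback of the étale map $\wt V\to Y\times\cdots$; more carefully, once we know $V\to W$ is étale and that on $V$ the two maps to $Z$ agree, the map $V\to T$ equals $pr_1\circ\iota\circ(V\to W)$, and I claim this is étale because it is a base change of $\iota$: indeed $V\cong W\times_{(\sigma_Zf,g),\,Z\times Z}Z$ and one checks, using that $Z\to\Spec\BF_q$ is smooth so $\Delta_Z$ is a regular immersion and the relevant maps are étale, that the projection to $T$ inherits étaleness from $\iota$ by the cancellation property of étale morphisms. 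The genuinely delicate point — which I would spell out in full — is verifying that after the factorization through $\phi$, the residual map to $T$ is literally a base change of $\iota$ along a morphism $T\to T$-compatible square, so that flatness and unramifiedness transfer; this is where the hypothesis that $\iota$ is étale (not merely smooth or unramified) is used, together with smoothness of $Z$ to guarantee the diagonal $\Delta_{Z/\BF_q}$ is a local complete intersection so no pathology arises in the fiber product.
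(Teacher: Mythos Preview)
Your argument has a genuine gap: you never use the Frobenius $\sigma_Z$. Every step you write treats $\sigma_Z\circ f$ as an arbitrary morphism $h\colon W\to Z$, and your factorizations, the fiber product $\wt V=W\times_{h,Z,\phi}Y$, the open-diagonal trick, and the conclusion that $V\to W$ is \'etale all remain valid with $h$ arbitrary. But the lemma is false in that generality: take $W=T\times_{\BF_q}Y$, $\iota=\id$, $Z=Y$, $\phi=\id$, and $h=g=pr_2$; then $V=W$ and $V\to T$ has relative dimension $\dim Y$. So the final step---where you try to promote ``$V\to W$ \'etale and $W\to T$ smooth'' to ``$V\to T$ \'etale'' via a purported base-change of $\iota$ or cancellation---cannot work as stated; it would prove a false statement.

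What the paper does, and what you are missing, is a Jacobian computation that exploits $d(\sigma_Z\circ f)=0$. After localizing and choosing an \'etale chart $\pi\colon Z'\to\BA^m=\Spec\BF_q[z_1,\dots,z_m]$, one writes $V$ (up to an open immersion coming from the same open-diagonal argument you used for $\phi$) as the zero locus in $W$ of the $m$ functions $g^*(z_j)-f^*(z_j)^q$. Since $q$ is a power of $p=\charakt\BF_q$, the differentials $d\bigl(f^*(z_j)^q\bigr)$ vanish, so the Jacobian of these $m$ equations is $\bigl(dg^*(z_j)\bigr)_j$, which has full rank $m$ because $g$ is \'etale. As $W\to T$ is smooth of relative dimension $m$, the Jacobi criterion gives $V\to T$ \'etale. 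Your observation that $V\to W$ is \'etale is correct and is in fact implicit in the paper's reduction, but the dimension drop from $m$ to $0$ over $T$ comes entirely from the vanishing of the Frobenius differential, which your proposal omits.
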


\begin{proof}
Let $v\in V$ be a point and let $w\in W$ and $z=\sigma_Z\circ f(w)=g(w)\in Z$, as well as $t\in T$ and $y\in Y$ be its images. Consider an affine open neighborhood $Z'$ of $z$ in $Z$ which admits an \'etale morphism $\pi\colon Z'\to\wt Z$ to some affine space $\wt Z=\BA^m_{\BF_q}=\Spec\BF_q[z_1,\ldots,z_m]$, and consider an affine neighborhood $T'$ of $t$ which we write as a closed subscheme of some $\wt T=\BA^\ell_{\BF_q}$. Replace $Y$ by an affine neighborhood $Y'$ of $y$ contained in $\phi^{-1}(Z')$ and $W$ by an affine neighborhood $W'$ of $w$ contained in $(\sigma_Z\circ f)^{-1}(Z')\cap g^{-1}(Z')\cap \iota^{-1}(T'\times_{\BF_q}Y')$. Then $V':=W'\times_WV=W'\times_{(\sigma_Zf,g),Z'\times Z',\Delta}Z'$ is an open neighborhood of $v$ in $V$. We may extend the \'etale morphism $\iota\colon W'\to T'\times_{\BF_q}Y'$ to an \'etale morphism $\tilde\iota\colon\wt W\to\wt T\times_{\BF_q}Y'$ with $\wt W\times_{\wt T}T'=W'$. We also extend $\pi\circ f\colon W'\to\wt Z$ to a morphism $\tilde f\colon\wt W\to \wt Z$, let $\tilde g:=\pi\,\phi\, pr_2\,\tilde\iota\colon\wt W\to \wt Z$, and set $\wt V:=\equi(\sigma_{\wt Z}\circ\tilde f,\tilde g\colon\wt W\rightrightarrows\wt Z)=\wt W\times_{(\sigma_{\wt Z}\tilde f,\tilde g),\wt Z\times\wt Z,\Delta}\wt Z$. Since $\Delta\colon Z'\to Z'\times_{\wt Z}Z'$ is an open immersion, also the natural morphism 
\[
V'\;\longto\; \wt V\times_{\wt T}T'\;=\;W'\times_{(\sigma_{\wt Z}\tilde f,\tilde g),\wt Z\times\wt Z,\Delta}\wt Z\;=\;W'\times_{(\sigma_Z f,g),Z'\times Z',\Delta}(Z'\times_{\wt Z}Z')
\]
is an open immersion. Since $\wt W$ is smooth over $\wt T$ of relative dimension $m$ and $\wt V$ is given by $m$ equations $\tilde g^*(z_j)-\tilde f^*(z_j)^q$ with linearly independent differentials $d\tilde g^*(z_j)$ the Jacobi-criterion \cite[\S 2.2, Proposition~7]{BLR} implies that $\wt V\to\wt T$ is \'etale. The lemma follows from this.
\end{proof}

\forget{
\begin{lemma}\label{etaleness}
Let $Z$ be a smooth scheme locally of finite type over $\BF_q$ and let $W$, $T$ and $Y$ be locally noetherian schemes. Assume that we have a morphism $f\colon W \to Z$, an \'etale morphism $\iota\colon  W\to Y\times T$ and an isomorphism $\phi\colon  Y\to Z$. 
Let $g\colon W\to Z$ denote the morphism $\phi \circ pr_1 \circ \iota$, where  $pr_1\colon Y\times T\to Y$ is the projection to the first factor. Consider the following diagram
\[
\xygraph{
!{<0cm,0cm>;<1cm,0cm>:<0cm,1cm>::}
!{(0,0) }*+{T}="a"
!{(-2,0) }*+{Y\times T}="b"
!{(-4,0) }*+{W}="c"
!{(-4,2) }*+{V}="d"
!{(-4,-2) }*+{Z}="e"
"d":"c":^{\iota}"b":^{pr_2}"a"
"d":^{\phi'}"a"
"c":_{\sigma_Z \circ f}@/_1em/"e"
"c":^g@/_-1em/"e"
} 
\]
here $V:=\ker ( \sigma_Z \circ f, g\colon  W \rightrightarrows Z)$. Then $\phi'$ is \'etale in either of the following cases
\begin{enumerate}
\item
$T$ is smooth over $\BF_q$,
\item
$Y$ and $T$ are locally of finite type over $\BF_q$.
\end{enumerate}
\end{lemma}

\begin{proof}
Since the question is local we may reduce to the case that $Z=\BA^m$. \comment{Where do you use this?}

We first show that the proof of case b) reduces to case a).

We may assume that $T$ is affine. Take a closed embedding $T\hookrightarrow \wt{T}$ of $T$ into a smooth affine scheme $\wt{T}$ and let $I=I_T$ denote the corresponding ideal identifying $T$ as a closed subscheme of $\wt{T}$. Let $\wh{\wt{T}}$ denote the spectrum of the ring obtain by taking the completion of the ring $\Gamma(\CO_\wt{T},\wt{T})$ with respect to the ideal $I_T$. Note that $\wh{\wt{T}}$ is regular; see \cite[IV, 7.8.3.v. page 215]{EGA}. \comment{But $\wh{\wt T}$ is only a formal scheme! Maybe you need that ...; see p.~64 in DissVersion1korrigiertTeil3120712.pdf}\\

Consider the closed immersion $Y\times T \to Y\times \wh{\widetilde{T}}$. By \cite[Chap.V, Theorem~1]{Raynaud} the \'etale topology on the closed subscheme $T$ is the induced topology, Zariski locally on $Y\times \wh{\widetilde{T}}$.  Thus we may assume that there is an \'etale morphism $\wh{\widetilde{W}}\to Y\times \wh{\widetilde{T}}$ such that $W= \wh{\widetilde{W}}\times_{Y\times \wh{\widetilde{T}}} Y\times T$. Now since $\wh{\widetilde{W}}$ is regular the morphism $f$ extends to $\hat{\tilde{f}}\colon  \wh{\widetilde{W}}\to Z$. Now assuming the conclusion of the lemma  in the case a), we see that the composition   
$$
\widetilde{V}:=\ker ( \sigma \hat{\tilde{f}}, g\colon  \wh{\widetilde{W}} \rightrightarrows Z)\to  \wh{\widetilde{W}}\to Y\times \wh{\widetilde{T}} \to \wh{\widetilde{T}}
$$
is \'etale. Therefore its restriction $f\colon  V\to T$ is also \'etale.\\
(a) Assume that $T$ is smooth. This implies that $V$ is \'etale over $T$. Indeed, $V$ is locally given by $m$ equations with linearly independent differentials inside the smooth scheme $\wt{W}$. Then (a) follows by the Jacobi-criterion \cite[Section 2.2, Proposition~7]{BLR}.
\end{proof}
}

\begin{remark}
Notice that the Varshavsky's arguments \cite{Var} about the intersection cohomology of the moduli stacks of (iterated) $G$-shtukas often rely on the following assumption that the Bott-Samelson-Demazure resolution for an affine Schubert variety is (semi-)small. This is the case for split constant reductive groups, but unfortunately for a parahoric group scheme $\FG$, the semi-smallness of the resolution may fail beyond the (co-)minuscule case. \forget{\comment{should write Richarz and ask about counterexample...} }
\end{remark}

%
%

\section{Analogue Of Rapoport-Zink Local Model}\label{LMI}

The PEL-Shimura varieties appear as moduli spaces for abelian varieties (together with additional structures, i.e. polarization, endomorphism and level structure). For such Shimura varieties, Rapoport and Zink \cite[Chapter 3]{RZ} establish their theory of local models. They consider the moduli space whose $S$-valued points parametrizes abelian varieties $X$ (with additional structures) together with a trivialization of the associated  de Rham cohomology. Consequently, one may simultaneously view this space as a torsor on the Shimura variety by forgetting the trivialization, and on the other hand, it also maps to a flag variety which parametrizes filtrations on $Lie(X)\dual$. The later map is formally smooth over its image by Grothendieck-Messing theory. This provides the local model roof, which they use to relate the local properties of the PEL-Shimura varieties and certain Schubert varieties inside flag varieties. In this chapter we investigate the analogous theory over function fields.

\subsection{Preliminaries On Local Theory}
In this section we provide some background material concerning the local theory of global $\FG$-shtukas.

\forget
{
In this section we discuss the local theory of global $\FG$-shtukas. In particular, we introduce local $\BP$-shtukas and we further explain their relation to the deformation theory of global $\FG$-shtukas. 
}

\begin{definition}\label{DefGlobaltoLocal}
Fix an $n$-tuple $\ul\nu:=(\nu_i)_i$ of places on $C$ with $\nu_i\ne\nu_j$ for $i\ne j$. Let $\wh A_\ul\nu$ be the completion of the local ring $\CO_{C^n,\ul\nu}$ of $C^n$ at the closed point $\ul\nu$, and let $\BF_{\ul\nu}$ be the residue field of the point $\ul\nu$. Then $\BF_{\ul\nu}$ is the compositum of the fields $\BF_{\nu_i}$ inside $\BF_q^\alg$, and $\wh A_\ul\nu\cong\BF_{\ul\nu}\dbl\zeta_1,\ldots,\zeta_n\dbr$ where $\zeta_i$ is a uniformizing parameter of $C$ at $\nu_i$. Let the stack $Hecke_{n,D}(C,\FG)^{\ul\nu}\;:=Hecke_{n,D}(C,\FG)\whtimes_{C^n}\Spf \wh A_\ul\nu$ (resp. $\nabla_n\scrH_D^1(C,\FG)^{\ul\nu}\;:=\;\nabla_n\scrH_D^1(C,\FG)\whtimes_{C^n}\Spf \wh A_\ul\nu$ )
be the formal completion of the ind-algebraic stack $Hecke_{n,D}(C,\FG)$ (resp. $\nabla_n\scrH_D^1(C,\FG)$) along $\ul\nu\in C^n$. It is an ind-algebraic stack over $\Spf \wh A_\ul\nu$ which is ind-separated and locally of ind-finite type; see Remark \ref{HeckeisQP} and Theorem \ref{ThmnHisArtin}. As before if $D= \emptyset$ we will drop it from our notation.
\end{definition}

\noindent
Here we recall the following notion of morphism between global $\FG$-shtukas.

\begin{definition}\label{quasi-isogenyGlob}
Consider a scheme $S$ together with characteristic sections $\ul\charsect=(s_i)_i\in C^n(S)$  and let $\ul{\CG}=(\CG,\tau)$ and $\ul{\CG}'=(\CG',\tau')$ be two global $\FG$-shtukas over $S$ with the same characteristics $\charsect_i$. A \emph{quasi-isogeny} from $\ul\CG$ to $\ul\CG'$ is an isomorphism $f\colon\CG|_{C_S \setminus D_S}\isoto \CG'|_{C_S \setminus D_S}$ satisfying $\tau'\sigma^{\ast}(f)=f\tau$, where $D$ is some effective divisor on $C$.
\end{definition}

\noindent
Before introducing the category of local $\BP$-shtukas, the global-local functor and the local boundedness condition, let us recall the following preparatory material.

\bigskip
Let $\BaseOfD$ be a finite field and $\BaseOfD\dbl z\dbr$ be the power series ring over $\BaseOfD$ in the variable $z$. We let $\BP$ be a smooth affine group scheme over $\BD:=\Spec\BaseOfD\dbl z\dbr$ with connected fibers, and we let $\genericG:=\BP\times_\BD\dot{\BD}$ be the generic fiber of $\BP$ over $\dot\BD:=\Spec\BaseOfD\dpl z\dpr$. 

\begin{definition}\label{DefLoopGps}
The \emph{group of positive loops associated with $\BP$} is the infinite dimensional affine group scheme $L^+\BP$ over $\BaseOfD$ whose $R$-valued points for an $\BaseOfD$-algebra $R$ are 
\[
L^+\BP(R):=\BP(R\dbl z\dbr):=\BP(\BD_R):=\Hom_\BD(\BD_R,\BP)\,.
\]
The \emph{group of loops associated with $\genericG$} is the $\fpqc$-sheaf of groups $L\genericG$ over $\BaseOfD$ whose $R$-valued points for an $\BaseOfD$-algebra $R$ are 
\[
L\genericG(R):=\genericG(R\dpl z\dpr):=\genericG(\dot{\BD}_R):=\Hom_{\dot\BD}(\dot\BD_R,\genericG)\,,
\]
where we write $R\dpl z\dpr:=R\dbl z \dbr[\frac{1}{z}]$ and $\dot{\BD}_R:=\Spec R\dpl z\dpr$. It is representable by an ind-scheme of ind-finite type over $\BaseOfD$; see \cite[\S\,1.a]{PR2}, or \cite[\S4.5]{B-D}, \cite{Ngo-Polo}, \cite{Faltings03} when $\BP$ is constant.
Let $\scrH^1(\Spec \BaseOfD,L^+\BP)\,:=\,[\Spec \BaseOfD/L^+\BP]$ (respectively $\scrH^1(\Spec \BaseOfD,L\genericG)\,:=\,[\Spec \BaseOfD/L\genericG]$) denote the classifying space of $L^+\BP$-torsors (respectively $L\genericG$-torsors). It is a stack fibered in groupoids over the category of $\BaseOfD$-schemes $S$ whose category $\scrH^1(\Spec \BaseOfD,L^+\BP)(S)$ consists of all $L^+\BP$-torsors (resp.\ $L\genericG$-torsors) on $S$. The inclusion of sheaves $L^+\BP\subset L\genericG$ gives rise to the natural 1-morphism 
\begin{equation}\label{EqLoopTorsor}
\L\colon\scrH^1(\Spec \BaseOfD,L^+\BP)\longto \scrH^1(\Spec \BaseOfD,L\genericG),~\CL_+\mapsto \CL\,.
\end{equation}
\end{definition}

\begin{definition}
The affine flag variety $\SpaceFl_\BP$ is defined to be the ind-scheme representing the $fpqc$-sheaf associated with the presheaf
$$
R\;\longmapsto\; L\genericG(R)/L^+\BP(R)\;=\;\BP\left(R\dpl z \dpr \right)/\BP\left(R\dbl z\dbr\right).
$$ 
on the category of $\BaseOfD$-algebras; compare Definition~\ref{DefLoopGps}.
\end{definition}

\begin{remark}\label{RemFlagisquasiproj}
 Recall that Pappas and Rapoport \cite[Theorem~1.4]{PR2} show that $\SpaceFl_\BP$ is ind-quasi-projective over $\BaseOfD$, and hence ind-separated and of ind-finite type over $\BaseOfD$. Additionally, they show that the quotient morphism $L\genericG \to \SpaceFl_\BP$ admits sections locally for the \'etale topology. They proceed as follows. When $\BP = \SL_{r,\BD}$, the \fpqc-sheaf $\CF\ell_\BP$ is called the \emph{affine Grassmanian}. It is an inductive limit of projective schemes over $\BF$, that is, ind-projective over $\BF$; see \cite[Theorem~4.5.1]{B-D} or \cite{Faltings03, Ngo-Polo}.  By \cite[Proposition~1.3]{PR2} and \cite[Proposition~2.1]{AH_Global} there is a faithful representation $\BP ֒\to \SL_r$ with quasi-affine quotient. Pappas and Rapoport show in the proof of \cite[Theorem~1.4]{PR2} that $\CF\ell_\BP \to \CF\ell_{\SL_r}$ is a locally closed embedding, and moreover, if $\SL_r /\BP$ is affine, then $\CF\ell_\BP \to \CF\ell_{\SL_r}$ is even a closed embedding and $\CF\ell_\BP$ is ind-projective. Moreover, if the fibers of $\BP$ over $\BD$ are geometrically connected, it was proved by Richarz \cite[Theorem A]{Richarz13} that $\CF\ell_\BP$ is ind-projective if and only if $\BP$ is a parahoric group scheme in the sense of Bruhat and Tits \cite[D\'efinition 5.2.6]{B-T}; see also \cite{H-R}. Note that, in particular, a parahoric group scheme is smooth with connected fibers and reductive generic fiber. 

\end{remark}

\forget{
\begin{remark}
For a closed point $\ul\nu \in C^n$ with $\nu_i\neq\nu_j$ for $i\neq j$, set $A_\ul \nu :=A_{\nu_1}\otimes_{\BF_q} \dots \otimes_{\BF_q} A_{\nu_n}$. Then we have
$$
(\FL_n\FG)_\ul \nu:=\FL_n\FG\times_{C^n}\Spec A_\ul{\nu}= \prod_i LP_\nu,~~~~~~ (\FL_n^+\FG)_{\ul\nu}:=\FL^+\FG\times_{C^n}\Spec A_{\ul\nu}= \prod_i L^+\BP_{\nu_i},
$$
where $LP_\nu(R)=P_\nu(R\dpl z_\nu\dpr)$ and $L^+\BP_\nu(R)=\BP_\nu(R\dbl z_\nu\dbr)$ are respectively the loop group and the group of positive loops of $\BP_\nu$. Moreover, that for any closed point $\ul\nu\in C^n$ with $\nu_i\neq\nu_j$ for $i\neq j$, the bijection induces an isomorphism $(GR_n)_{\ul\nu}:=GR_n\times_{C^n} \Spf \wh A_\ul\nu\cong\prod_i \CF l_{\BP_{\nu_i}}:= \prod_i LP_{\nu_i}/L^+\BP_{\nu_i}$ by \cite[Proposition~3.11]{LaszloSorger}.

\end{remark}

}

Here we recall the definition of the category of local $\BP$-shtukas.

\begin{definition}\label{localSht}

Let $\CX$ be the fiber product 
$$
\scrH^1(\Spec \BaseOfD,L^+\BP)\times_{\scrH^1(\Spec \BaseOfD,L\genericG)} \scrH^1(\Spec \BaseOfD,L^+\BP)
$$ 
of groupoids. Let $pr_i$ denote the projection onto the $i$-th factor. We define the groupoid of \emph{local $\BP$-shtukas} $\Sht_{\BP}^{\BD}$ to be 
$$
\Sht_{\BP}^{\BD}\;:=\;\equi\left(\hat{\sigma}\circ pr_1,pr_2\colon \CX \rightrightarrows \scrH^1(\Spec \BaseOfD,L^+\BP)\right)\whtimes_{\Spec\BaseOfD}\Spf\BaseOfD\dbl\zeta\dbr.
$$
(see Definition~\ref{ker}) where $\hat{\sigma}:=\hat{\sigma}_{\scrH^1(\Spec \BaseOfD,L^+\BP)}$ is the absolute $\BaseOfD$-Frobenius of $\scrH^1(\Spec \BaseOfD,L^+\BP)$. The category $\Sht_{\BP}^{\BD}$ is fibered in groupoids over the category $\Nilp_{\BaseOfD\dbl\zeta\dbr}$ of $\BaseOfD\dbl\zeta\dbr$-schemes on which $\zeta$ is locally nilpotent. We call an object of the category $\Sht_{\BP}^{\BD}(S)$ a \emph{local $\BP$-shtuka over $S$}. 

More explicitly a local $\BP$-shtuka over $S\in \Nilp_{\BaseOfD\dbl\zeta\dbr}$ is a pair $\ul \CL = (\CL_+,\tauLoc)$ consisting of an $L^+\BP$-torsor $\CL_+$ on $S$ and an isomorphism of the associated loop group torsors $\tauLoc\colon  \hat{\sigma}^\ast \CL \to\CL$. 
\end{definition}

Local $\BP$-shtukas can be viewed as function field analogs of $p$-divisible groups. According to this analogy one may introduce the following notion.

\begin{definition}\label{quasi-isogeny L}
A \emph{quasi-isogeny} $f\colon\ul\CL\to\ul\CL'$ between two local $\BP$-shtukas $\ul{\CL}:=(\CL_+,\tauLoc)$ and $\ul{\CL}':=(\CL_+' ,\tauLoc')$ over $S$ is an isomorphism of the associated $L\genericG$-torsors $f \colon  \CL \to \CL'$ satisfying $f\circ\tauLoc=\tauLoc'\circ\hat{\sigma}^{\ast}f$. We denote by $\QIsog_S(\ul{\CL},\ul{\CL}')$ the set of quasi-isogenies between $\ul{\CL}$ and $\ul{\CL}'$ over $S$. 
\end{definition}

\begin{definition}\label{DefFormalTorsor}
Let $\hat{\BP}$ be the formal group scheme over $\hat{\BD}:=\Spf\BaseOfD\dbl z\dbr$, obtained by the formal completion of $\BP$ along $V(z)$. A \emph{formal $\hat{\BP}$-torsor} over an $\BaseOfD$-scheme $S$ is a $z$-adic formal scheme $\hat{\CP}$ over $\hat{\BD}_{S}:=\hat{\BD}\whtimes_{\BaseOfD} S$ together with an action
$\hat{\BP}\whtimes_{\hat{\BD}}\hat{\CP}\rightarrow\hat{\CP}$ of $\hat{\BP}$ on $\hat{\CP}$ such that there is a covering $\hat{\BD}_{S'} \rightarrow \hat{\BD}_{S}$ where ${S'\rightarrow S}$ is an \fpqc-covering
and a $\hat{\BP}$-equivariant isomorphism $
\hat{\CP} \whtimes_{\hat{\BD}_{S}} \hat{\BD}_{S'}\isoto\hat{\BP} \whtimes_{\hat{\BD}}\hat{\BD}_{S'}$. Here $\hat{\BP}$ acts on itself by right multiplication.
Let $\scrH^1(\hat{\BD},\hat{\BP})$ be the category fibered in groupoids that assigns to each $\BaseOfD$-scheme $S$ the groupoid consisting of all formal $\hat{\BP}$-torsors over $\hat{\BD}_S$.

\end{definition}

\begin{remark}\label{RemFormalTorsor}
\begin{enumerate}

\item\label{formal torsor a}
There is a natural isomorphism $\scrH^1(\hat{\BD},\hat{\BP}) \isoto \scrH^1(\Spec \BaseOfD,L^+\BP)$
of groupoids. In particular all $L^+\BP$-torsors for the $\fpqc$-topology on $S$ are already trivial \'etale locally on $S$; see \cite[Proposition~2.4]{AH_Local}.

\item\label{formal torsor b}

Let $\nu$ be a place on $C$ and let $\BD_\nu:=\Spec \wh A_\nu$ and $\hat\BD_\nu:=\Spf \wh A_\nu$. We set $\BP_{\nu}:=\FG\times_C\Spec \wh A_\nu$ and $\hat\BP_{\nu}:=\FG\times_C\Spf \wh A_\nu$. Let $\deg\nu:=[\BF_\nu:\BF_q]$ and fix an inclusion $\BF_\nu\subset \wh A_\nu$. Assume that we have a section $\charsect\colon S\rightarrow C$ which factors through $\Spf \wh A_\nu$, that is, the image in $\CO_S$ of a uniformizer of $\wh A_\nu$ is locally nilpotent. In this case we have $\hat\BD_\nu\whtimes_{\BF_q}\, S\cong \coprod_{\ell\in\BZ/(\deg\nu)} \Var (\mathfrak{a}_{\nu,\ell})\cong \coprod_{\ell\in \BZ/(\deg\nu)}  \hat{\BD}_{\nu,S}$,
\forget{
\begin{equation}\label{EqDecomp}
\hat\BD_\nu\whtimes_{\BF_q}\, S\cong \coprod_{\ell\in\BZ/(\deg\nu)} \Var (\mathfrak{a}_{\nu,\ell})\cong \coprod_{\ell\in \BZ/(\deg\nu)}  \hat{\BD}_{\nu,S},
\end{equation}
}
where $\hat\BD_{\nu,S}:=\hat\BD_\nu\whtimes_{\BF_\nu}S$ and where $\Var (\mathfrak{a}_{\nu,\ell})$ denotes the component identified by the ideal $\mathfrak{a}_{\nu,\ell}=\langle a \otimes 1 -1\otimes \charsect^\ast (a)^{q^\ell}\colon a \in \BF_\nu \rangle$. Note that $\sigma$ cyclically permutes these components and thus the $\BF_\nu$-Frobenius $\sigma^{\deg{\nu}}=:\hat\sigma$ leaves each of the components $\Var (\mathfrak{a}_{\nu,\ell})$ stable. Also note that there are canonical isomorphisms $\Var (\mathfrak{a}_{\nu,\ell})\cong\hat\BD_{\nu,S}$ for all $\ell$.

\item\label{formal torsor c}

 Assume that we have a section $\charsect\colon S\rightarrow C$ which factors through $\Spf \wh A_\nu$ as above. By part \ref{formal torsor b} we may decompose $\CG\whtimes_{C_S}(\Spf \wh A_{\nu_i}\whtimes_{\BF_q}S)\cong\coprod_{\ell\in \BZ/(\deg\nu_i)}\CG\whtimes_{C_S}\Var (\mathfrak{a}_{\nu_i,\ell})$
into a finite product with components $\CG\whtimes_{C_S}\Var (\mathfrak{a}_{\nu_i,\ell})\in\scrH^1(\hat\BD_{\nu_i},\hat\BP_{\nu_i})(S)$. According to \ref{formal torsor a}, we view $\CG\whtimes_{C_S}\Var (\mathfrak{a}_{\nu_i,0})$ as $L^+\BP_\nu$-torsor over $S$, which we denote by $\L_\nu^+\CG$. Furthermore we denote by $\L_\nu \CG$ the $LP_\nu$-torsor associated with $L^+\BP_\nu$-torsor $\L_\nu^+\CG$ regarding the natural 1-morphism (4.1).  

\item\label{formal torsor d}

Fix an $n$-tuple $\ul\nu:=(\nu_i)$ of closed points of $C$. Let $\ul s:=(s_i)$ be $n$-tuple of sections $s_i:S\to C$, such that $s_i$ factors through $\Spf \wh A_{\nu_i}$. We set $\dot{(C_S)}_{\ul s}:=C_S\setminus\Gamma_\ul s$. Let  $[\BF_q/\FG]_e(\dot{(C_S)}_{\ul s})$ denote the category of $\FG$-bundles over $\dot{(C_S)}_{\ul s}$ that can be extended to a $\FG$-bundle over whole relative curve $C_S$. We denote by $\dot{(~)}_\ul s$ the restriction functor 
$$
\dot{(~)}_\ul s\colon\scrH^1(C,\FG)\longto [\BF_q/\FG]_e(\dot{(C_S)}_{\ul s})
$$ 
which assigns to a $\FG$-torsor $\CG$ over $C_S$, the $\FG$-torsor $\dot{(\CG)}_\ul s:=\CG\times_{C_S}\dot{(C_S)}_\ul s$ over $\dot{(C_S)}_\ul s$. Furthermore, for every $i$ there is a functor
$$
\L_{\nu_i}\colon [\BF_q/\FG]_e(\dot{(C_S)}_{\ul s})\longto\scrH^1(\Spec\BF_\nu,L \genericG_{\nu_i})(S)\,
$$
which sends the $\FG$-torsor $\CG'$ over $\dot{(C_S)}_\ul s$, having some extension $\CG$ over $C_S$, to the $L\genericG_\nu$-torsor $\L(\L^+_{\nu_i}(\CG))$ associated with $\L^+_{\nu_i}\CG$ under \eqref{EqLoopTorsor}. One can verify that this assignment is well defined; see \cite[Section~5.1]{AH_Local}. \\
\item\label{formal torsor e}
(Loop group version of Beauville-Laszlo gluing lemma) We define the groupoid $\mathcal{D}(\ul s)$ whose objects consist of the tuples $
(\CG',\CL_{\nu_i}^+, \alpha_i: \L_{\nu_i} \CL_{\nu_i}^+\to \L_{\nu_i}\CG')$, where $\CG'$ lies in the category $[\BF_q/\FG]_e(\dot{(C_S)}_{\ul s})$, $\CL_{\nu_i}^+$ is an $L^+{\BP}_{\nu_i}$-torsor over $S$ and finally an isomorphism $\alpha_i: \L_{\nu_i} \CL_+\to \L_{\nu_i}\CG'$ between associated loop torsors.  One can prove that the functor $\scrH^1(C,\FG)(S) \to  \mathcal{D}(\ul s)$, which sends $\CG$ to $\left(\dot{(\CG)}_\ul s,\L_{\nu_i}^+\CG, \alpha_i:\L_{\nu_i} \L_{\nu_i}^+\CG\to \L_{\nu_i}\dot{(\CG)}_\ul s\right)$ is an equivalence of categories; see \cite[Lemma~5.1]{AH_Local}.

\end{enumerate}
\end{remark}

\subsection{$\BP$-Shtukas And The Deformation Theory Of Global $\FG$-Shtukas}

First of all, we recall an important feature of the morphisms of the category of local $\BP$-shtukas. Namely, as in the theory of $p$-divisible groups, quasi-isogenies between local $\BP$-shtukas enjoy the rigidity property. More explicitly, a quasi-isogeny between local $\BP$-shtukas lifts over infinitesimal thickenings, thanks to the Frobenius connections.

\begin{proposition}[Rigidity of quasi-isogenies for local $\BP$-shtukas] \label{PropRigidityLocal}
Let $S$ be a scheme in $\Nilp_{\BaseOfD\dbl\zeta\dbr}$ and
let $j \colon  \bar{S}\rightarrow S$ be a closed immersion defined by a sheaf of ideals $\CI$ which is locally nilpotent.
Let $\ul{\CL}$ and $\ul{\CL}'$ be two local $\BP$-shtukas over $S$. Then
$$
\QIsog_S(\ul{\CL}, \ul{\CL}') \longto \QIsog_{\bar{S}}(j^*\ul{\CL}, j^*\ul{\CL}') ,\quad f \mapsto j^*f
$$
is a bijection of sets.
\end{proposition}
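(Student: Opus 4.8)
The plan is to reduce to an infinitesimal situation small enough that the $\BaseOfD$‑Frobenius becomes blind to the ideal $\CI$, and then to write the lift down explicitly. First, the statement is local on $S$: quasi-isogenies are the sections over $S$ of a sheaf, namely the subsheaf of $\underline{\Isom}_{L\genericG}(\CL,\CL')$ cut out by the relation of Definition~\ref{quasi-isogeny L}, and $j$ is a universal homeomorphism, so opens of $\bar S$ and of $S$ correspond; hence I may assume $S=\Spec R$ is affine, in which case $\CI$ is nilpotent, say $\CI^N=0$. Filtering $V(\CI)\hookrightarrow V(\CI^2)\hookrightarrow\dots\hookrightarrow V(\CI^N)=S$, where each step $V(\CI^k)\hookrightarrow V(\CI^{k+1})$ is defined by the ideal $\CI^k/\CI^{k+1}$ of square zero, an evident induction reduces the claim to the case $\CI^2=0$.

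Assume $\CI^2=0$. The key observation is that $\hat\sigma_S$ then factors through the closed immersion $j$. Indeed, writing $q:=\#\BaseOfD$, a power of the characteristic $p$, the fact that $\CO_S$ is an $\BF_p$‑algebra and that $q=0$ in $\CO_S$ makes the binomial expansion of $(a+i)^q$ collapse to $a^q$ for $i\in\CI$; thus $\bar a\mapsto a^q$ is a well-defined ring homomorphism $\CO_{\bar S}=\CO_S/\CI\to\CO_S$, i.e.\ there is a (unique) morphism $g\colon S\to\bar S$ with $j\circ g=\hat\sigma_S$, and one also gets $g\circ j=\hat\sigma_{\bar S}$. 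Since Frobenius commutes with base change, for any object $X$ over $S$ this yields a canonical identification $\hat\sigma_S^\ast X\cong g^\ast(j^\ast X)$ compatible with all structure in sight; in particular the Frobenius pullbacks $\hat\sigma_S^\ast\CL$ and $\hat\sigma_S^\ast\CL'$ depend only on the restrictions of $\ul\CL$ and $\ul\CL'$ to $\bar S$.

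With this in hand, injectivity is immediate: for $f\in\QIsog_S(\ul\CL,\ul\CL')$ the defining relation gives $f=\tauLoc'\circ\hat\sigma_S^\ast f\circ\tauLoc^{-1}$, and $\hat\sigma_S^\ast f=g^\ast(j^\ast f)$ depends only on $j^\ast f$. For surjectivity, given $\bar f\in\QIsog_{\bar S}(j^\ast\ul\CL,j^\ast\ul\CL')$ I would set
\[
f\;:=\;\tauLoc'\circ g^\ast(\bar f)\circ\tauLoc^{-1}\colon\quad\CL\;\longto\;\hat\sigma_S^\ast\CL\;\xrightarrow{\ g^\ast\bar f\ }\;\hat\sigma_S^\ast\CL'\;\longto\;\CL',
\]
an isomorphism of $L\genericG$‑torsors over $S$, and then check, using $g\circ j=\hat\sigma_{\bar S}$ and the Frobenius compatibility of $\bar f$, that $j^\ast f=\bar f$; this forces $\hat\sigma_S^\ast f=g^\ast(j^\ast f)=g^\ast\bar f$, whence $f\circ\tauLoc=\tauLoc'\circ g^\ast\bar f=\tauLoc'\circ\hat\sigma_S^\ast f$, so $f$ is a quasi-isogeny lifting $\bar f$. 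I expect the only delicate points to be organising the reduction to $\CI^2=0$ cleanly and verifying that the identification $\hat\sigma_S^\ast(-)\cong g^\ast j^\ast(-)$ is compatible with the torsor structures and with $\tauLoc,\tauLoc'$; once the factorisation $\hat\sigma_S=j\circ g$ is established the rest is purely formal, and the whole argument parallels the classical rigidity of quasi-isogenies of $p$‑divisible groups.
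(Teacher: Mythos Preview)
Your argument is correct and is precisely the standard Drinfeld-style proof of rigidity: reduce to $\CI^2=0$, factor the $\BaseOfD$-Frobenius through $j$, and reconstruct $f$ from $\bar f$ via $f=\tauLoc'\circ g^\ast\bar f\circ\tauLoc^{-1}$. The paper itself does not give a proof here but simply cites \cite[Proposition~2.11]{AH_Local}, where the same argument is carried out; one small quibble is that the phrase ``$q=0$ in $\CO_S$'' is not the operative fact---what you actually use is that your $q=\#\BaseOfD$ is a power of $p$, so that $(a+i)^q=a^q+i^q=a^q$ in characteristic $p$ once $\CI^2=0$.
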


\begin{proof} See \cite[Proposition 2.11]{AH_Local}.

\end{proof}
\noindent
Notice that, like for abelian varieties, the corresponding statement for global $\FG$-shtukas only holds in fixed finite characteristics. For a detailed account see \cite[Chapter~2 and Chapter~5]{AH_Global}.\\

Analogously to the functor which assigns to an abelian variety over a $\BZ_p$-scheme its $p$-divisible group, there is a global-local functor from the category of global $\FG$-shtukas to the category of local $\BP_{\nu_i}$-shtukas. This functor has been introduced in \cite[Section~5.2]{AH_Local}. As an additional analogy, Hartl and the first author proved that the infinitesimal deformation of a global $\FG$-shtuka is completely ruled by the infinitesimal deformations of the associated local $\BP$-shtukas at the characteristic places. Bellow we explain this phenomenon.

\noindent
We set $\BP_{\nu_i}:=\FG\times_C\Spec \wh A_{\nu_i}$ and $\hat\BP_{\nu_i}:=\FG\times_C\Spf \wh A_{\nu_i}$. Let $(\CG,\ul\charsect,\tauGlob)\in \nabla_n\scrH^1(C,\FG)^{\ul\nu}(S)$, that is, $s_i\colon S\to C$ factors through $\Spf \wh A_{\nu_i}$. According to \ref{RemFormalTorsor}\ref{formal torsor c} we define the \emph{global-local functor} by

\begin{eqnarray}\label{EqGlobLocFunctor}
\wh\Gamma_{\nu_i}(-)\colon \es \nabla_n\scrH^1(C,\FG)^{\ul\nu}(S) & \longto & \Sht_{\BP_{\nu_i}}^{\Spec \wh A_{\nu_i}}(S)\,,\nonumber\\
(\CG,\tauGlob) & \longmapsto & \bigl(\L_{\nu_i}^+\CG,\L_{\nu_i}\tauGlob^{\deg\nu_i}\bigr)\,,\nonumber\\[2mm]
\qquad\ul{\wh\Gamma}\;:=\; \prod_i\wh\Gamma_{\nu_i}\colon \es \nabla_n\scrH^1(C,\FG)^{\ul\nu}(S) & \longto & \prod_i \Sht_{\BP_{\nu_i}}^{\Spec \wh A_{\nu_i}}(S)\, ,\label{G-LFunc}
\end{eqnarray}
where $\L_{\nu_i}\tauGlob^{\deg\nu_i}\colon(\sigma^{\deg\nu_i})^*\L_{\nu_i}\CG\isoto \L_{\nu_i}\CG$ is the $\BF_{\nu_i}$-Frobenius on the loop group torsor $\L_{\nu_i}\CG$ associated with $\L_{\nu_i}^+\CG$. These functors also transform quasi-isogenies into quasi-isogenies. For further explanation see \cite[Section~5.2]{AH_Local}.


Let $S \in \Nilp_{\wh A_\ul\nu}$ and let $j: \ol S \to S$ be a closed subscheme defined by a locally nilpotent sheaf of ideals $\CI$. Let $\ol{\ul\CG}$ be a global $\FG$-shtuka $\nabla_n\scrH^1(C,\FG)^{\ul \nu}(\bar{S})$. We let $Defo_S(\bar{\ul{\cG}})$ denote the category of infinitesimal deformations of $\ol{\ul\CG}$ over $S$. More explicitly $Defo_S(\bar{\ul{\cG}})$ is th category of lifts of $\ol{\ul\CG}$ to $S$, which consists of all pairs $(\ul\CG,\alpha: j^\ast \ul \CG \to \ol{\ul\CG})$ where $\ul\CG$ belongs to $\nabla_n\scrH^1(C,\FG)^{\ul \nu}(S)$, where $\alpha$ is an isomorphism of global $\FG$-shtukas over $S$.\\
Similarly for a local $\BP$-shtuka $\bar{\CL}$ in $\Sht_\BP^\BD(S)$ we define the category of lifts  $Defo_S(\bar{\ul\CL})$ of $\bar{\ul\CL}$ to $S$.

\begin{theorem}\label{Serre-Tate} 
(The Analog of the Serre-Tate Theorem for $\FG$-shtukas) Keep the notation. Let $\bar{\ul{\cG}}:=(\bar{\CG},\bar{\tau})$ be a global $\FG$-shtuka in $\nabla_n\scrH^1(C,\FG)^{\ul \nu}(\bar{S})$. Let $(\ul{\bar\CL}_i)_i=\wh{\ul\Gamma}(\bar{\ul{\cG}})$. Then the functor 
$$
Defo_S(\bar{\ul{\cG}})\longto \prod_i Defo_S(\ul{\bar\CL}_i)\,,\quad \bigl(\ul\CG,\alpha)\longmapsto(\ul{\wh\Gamma}(\ul\CG),\ul{\wh\Gamma}(\alpha)\bigr)
$$ 
induced by the global-local functor (\ref{G-LFunc}), is an equivalence of categories.
\end{theorem}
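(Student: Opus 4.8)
The plan is to run the classical Serre--Tate argument in this setting. Using the loop-group Beauville--Laszlo gluing of Remark~\ref{RemFormalTorsor}\ref{formal torsor e}, one decomposes a global $\FG$-shtuka over $S$ with characteristics $\ul s$ into: its restriction $\dot{(\CG)}_{\ul s}$ to $C_S\setminus\Gamma_{\ul s}$ together with the Frobenius $\tau$; the local torsors $\L^+_{\nu_i}\CG$ at the places $\nu_i$; and the gluing isomorphisms $\alpha_i$ over the punctured formal disks. I will show that the first and third data admit no nontrivial infinitesimal deformation, so that all deformations come from the local torsors equipped with the Frobenius induced by $\tau$, i.e.\ from the local $\BP_{\nu_i}$-shtukas $\wh\Gamma_{\nu_i}(\ul\CG)$. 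Since the functor in the theorem is the one induced by the global-local functor (\ref{G-LFunc}), the equivalence then follows.

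First I would reduce to $\CI^2=0$. Since $S\in\Nilp_{\wh A_{\ul\nu}}$ the ideal $\CI$ is locally nilpotent, so $j$ is a finite composite of square-zero closed immersions, and an equivalence of deformation groupoids may be checked one such step at a time. Assuming $\CI^2=0$, the key elementary observation is that $q\geq 2$ forces $x^q\in\CI^2=0$ for every local section $x$ of $\CI$; hence $\sigma_S^\#$ kills $\CI$ and factors through $\CO_{\bar S}$, producing a morphism $\bar\sigma\colon S\to\bar S$ with $\sigma_S=j\circ\bar\sigma$ and $\bar\sigma\circ j=\sigma_{\bar S}$. Consequently, for \emph{any} lift $\CG$ of $\bar\CG$ to $C_S$ there is a canonical identification $\sigma^\ast\CG=(\id_C\times\bar\sigma)^\ast\bar\CG$, compatible with the reduction isomorphism $\alpha\colon j^\ast\CG\isoto\bar\CG$ and independent of the lift: the pullback by $\sigma$ sees only the special fibre.

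From this I would extract the two rigidity facts that drive the proof. First, the pair $\bigl(\dot{(\bar\CG)}_{\ul s},\bar\tau\bigr)$ over $C_S\setminus\Gamma_{\ul s}$ has an essentially unique deformation: for any lift $\CG$ the isomorphism $\tau$ identifies $\dot{(\CG)}_{\ul s}$ with the restriction of the fixed bundle $(\id_C\times\bar\sigma)^\ast\bar\CG$, compatibly with reductions, so $\bigl(\dot{(\CG)}_{\ul s},\tau\bigr)$ must be the canonical lift transporting $\bar\tau$; one then checks that the deformation groupoid of $\bigl(\dot{(\bar\CG)}_{\ul s},\bar\tau\bigr)$ inside $[\BF_q/\FG]_e\bigl(\dot{(C_S)}_{\ul s}\bigr)$ is trivial. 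Second, the gluing isomorphism $\alpha_i\colon\L_{\nu_i}\L^+_{\nu_i}\CG\isoto\L_{\nu_i}\dot{(\CG)}_{\ul s}$ of Remark~\ref{RemFormalTorsor}\ref{formal torsor e} is, by construction, compatible with the Frobenii induced by $\tau$ on both sides, hence is a quasi-isogeny of local $\BP_{\nu_i}$-shtukas and lifts uniquely along $j$ by the rigidity Proposition~\ref{PropRigidityLocal}. Combining these with the gluing description of Remark~\ref{RemFormalTorsor}\ref{formal torsor c} and \ref{RemFormalTorsor}\ref{formal torsor e}, the groupoid of global $\FG$-shtukas over $S$ with characteristics $\ul s$ factoring through $\Spf\wh A_{\nu_i}$ is identified with the groupoid of triples (object away from $\ul s$, local $\BP_{\nu_i}$-shtukas, Frobenius-compatible gluings), and $\ul{\wh\Gamma}$ becomes the projection onto the local $\BP_{\nu_i}$-shtukas. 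Passing to deformations of $\bar{\ul\CG}$, the away-part and the gluings contribute trivial groupoids, so $\ul{\wh\Gamma}$ induces an equivalence $Defo_S(\bar{\ul\CG})\isoto\prod_i Defo_S(\ul{\bar\CL}_i)$. Concretely: for essential surjectivity one takes given deformations $\ul\CL_i$ of $\ul{\bar\CL}_i$, the canonical deformation of the away-part and the unique lifts of the $\bar\alpha_i$, and reassembles; full faithfulness is the same argument on morphisms.

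I expect the main obstacle to be making the rigidity of the away-part genuinely precise. One must work throughout inside the category $[\BF_q/\FG]_e\bigl(\dot{(C_S)}_{\ul s}\bigr)$ of $\FG$-bundles extending over all of $C_S$, so that the extendability condition and the component decomposition $\hat\BD_{\nu_i}\whtimes_{\BF_q}S\cong\coprod_\ell\Var(\mathfrak a_{\nu_i,\ell})$ of Remark~\ref{RemFormalTorsor}\ref{formal torsor b} --- which produces the $\deg\nu_i$-twist in the global-local functor --- are tracked correctly through the gluing; once the square-zero observation on $\sigma^\ast$ is in hand, the remainder is bookkeeping.
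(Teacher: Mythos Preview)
Your proposal is correct and matches the approach the paper indicates. Note that the paper does not actually prove this theorem in the body: it states the result and then remarks that ``the proof essentially relies on the rigidity of quasi-isogenies, Proposition~\ref{PropRigidityLocal}, and the following significant fact. A global $\FG$-shtuka $\ul\CG$ can be pull-backed along a quasi-isogeny $\ul\CL'\to\ul\CL_{\nu_i}$\ldots'', deferring the details to \cite[Proposition~5.7]{AH_Local}. Your argument unpacks exactly these two ingredients: the square-zero Frobenius trick gives the canonical lift of the away-from-$\ul s$ data, and rigidity of quasi-isogenies (Proposition~\ref{PropRigidityLocal}) handles the gluing; the Beauville--Laszlo description of Remark~\ref{RemFormalTorsor}\ref{formal torsor e} is precisely what makes the ``pull-back along a quasi-isogeny'' statement go. The only cosmetic difference is packaging: the paper isolates the pull-back construction as a standalone proposition and then feeds it into the Serre--Tate machine, whereas you run the gluing argument directly inside the deformation groupoid. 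Your caution about tracking the $\deg\nu_i$-twist through the component decomposition of Remark~\ref{RemFormalTorsor}\ref{formal torsor b} is well placed; this is where careless bookkeeping would go wrong, but your outline handles it correctly.
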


\bigskip

Analyzing the proof of the above theorem, one can see that the proof essentially relies on the rigidity of quasdi-isogenies, Proposition \ref{PropRigidityLocal}, and the following significant fact. A global $\FG$-shtuka $\ul\CG$ can be pull-backed along a quasi-isogeny $\ul\CL'\to \ul\CL_{\nu_i}$ from a local $\BP_{\nu_i}$-shtuka $\ul\CL'$ to $\ul\CL_{\nu_i}$. Where $\ul\CL_{\nu_i}$ is the local $\BP_{\nu_i}$-shtuka associated with $\ul\CG$ at the characteristic place $\nu_i$, via the global-local functor \ref{G-LFunc}. Here we are not going to explain this phenomenon and we refer the interested reader to \cite[Proposition~5.7]{AH_Local}. However, regarding our goal in this section, we discuss the following simpler case. Namely, the corresponding fact for the moduli stack $\scrH^1(C,\FG)$ of $\FG$-bundles. This will be needed later in the proof of Proposition \ref{PropLocalModelHecke} and the local model theorem \ref{ThmRapoportZinkLocalModel}. \\

Consider the formal scheme $\Spf \BF\dbl\zeta\dbr$ as the ind-scheme $\dirlim\Spec \BF\dbl\zeta\dbr$ and let $\wh\SpaceFl_\BP$ be the fiber product $\SpaceFl_\BP \times \Spf \BF\dbl\zeta\dbr$ in the category of ind-schemes ; see \cite[7.11.1]{B-D}. Thus $\wh\SpaceFl_\BP$ is the restriction of the sheaf $\SpaceFl_\BP$ to the \fppf -site of schemes in $\Nilp_{\BF\dbl\zeta\dbr}$.
\forget
{For a scheme $\bar \TTT$ over $\BaseOfD$ we set $\wh{\TTT}:=\bar \TTT\whtimes_{\Spec\BaseOfD}\Spf\BaseOfD\dbl\zeta\dbr$. Then $\wh \TTT$ is a $\zeta$-adic formal scheme with $\bar \TTT=\Var_{\wh \TTT}(\zeta)$. So the underlying topological spaces of $\bar\TTT$ and $\wh{\TTT}$ coincide. We let $\Nilp_{\wh{\TTT}}$ be the category of $\wh{\TTT}$-schemes on which $\zeta$ is locally nilpotent. We may view $\wh\TTT=\bar\TTT\whtimes_\BaseOfD\Spf \BaseOfD\dbl\zeta\dbr$ as the ind-scheme $\dirlim\bar\TTT\times_\BaseOfD\Spec \BaseOfD[\zeta]/(\zeta^{m})$, and form the fiber product $\wh{\SpaceFl}_{\BP,\wh\TTT}:=\SpaceFl_{\BP}\whtimes_\BaseOfD\wh\TTT$ in the category of ind-schemes; see \cite[7.11.1]{B-D}. 
\noindent
The ind-scheme $\wh\SpaceFl_{\BP,\wh\TTT}:=\SpaceFl_\BP\whtimes_\BaseOfD\wh \TTT$ pro-represents the functor
\begin{eqnarray*}
&(\Nilp_{\wh{\TTT}})^o &\longto  \Sets\hspace{7cm}\vspace{-2mm}\\
&\SSS &\longmapsto  \big\{\text{Isomorphism classes of }(\CL_+,\delta)\colon\;\text{where }\CL_+~\text{is an }\\ 
& & ~~~~~~~~~~~~~ \text{$L^+\BP$-torsor over $\SSS$ and a trivialization}~\delta\colon  \CL \to LP_S\\
& & ~~~~~~~~~~~~~~~~~~~~~~~~~~~~~~~~\text{of the associated $LP$-torsor}~\CL~\text{over $S$}\big\}. 
\end{eqnarray*}
\noindent
For details see \cite[Remark 4.2 and Theorem 4.4]{AH_Local}.\\
}
The ind-scheme $\wh\SpaceFl_\BP$ pro-represents the functor
\begin{eqnarray}\label{EqRZFunctor}
&(\Nilp_{\BF\dbl\zeta\dbr})^o &\longto  \Sets\hspace{7cm}\vspace{-2mm}\\
&\SSS &\longmapsto  \big\{\text{Isomorphism classes of }(\CL_+,\delta)\colon\;\text{where }\CL_+~\text{is an }\nonumber\\ 
& & ~~~~~~~~~~~~~ \text{$L^+\BP$-torsor over $\SSS$ and a trivialization}~\delta\colon  \CL \to LP_S\nonumber\\
& & ~~~~~~~~~~~~~~~~~~~~~~~~~~~~~~~~\text{of the associated $LP$-torsor}~\CL~\text{over $S$}\big\}.\nonumber 
\end{eqnarray}
\noindent
For details see \cite[Remark 4.2, Theorem 4.4]{AH_Local} and Proposition \ref{PropRigidityLocal}.\\
Note further that for an $L^+\BP$-torsor $\CL_{0,+}$ over $S$, one may further define a twisted variant $\CM(\CL_{0,+})$ of the above functor, which assigns to a scheme $T$ over $S$, the set of isomorphism classes of tuples $(\CL_+,\delta)$ consisting of an $L^+\BP$-torsor $\CL_+$ over $T$ and an isomorphism $\delta\colon  \CL \to \CL_{0,+,T}$.

\forget
{

\begin{remark}\label{RZ space}

Consider the formal scheme $\Spf \BF\dbl\zeta\dbr$ as the ind-scheme $\dirlim\Spec \BF\dbl\zeta\dbr$ and let $\wh\SpaceFl_\BP$ be the fiber product $\wh\SpaceFl_\BP \times \Spf \BF\dbl\zeta\dbr$ in the category of ind-schemes ; see \cite[7.11.1]{B-D}. Thus $\wh\SpaceFl_\BP$ is the restriction of the sheaf $\wh\SpaceFl_\BP$ to the \fppf -site of schemes in $\Nilp_{\BF\dbl\zeta\dbr}$.

The ind-scheme $\wh\SpaceFl_\BP$ pro-represents the functor
\begin{eqnarray}\label{EqRZFunctor}
&(\Nilp_{\BF\dbl\zeta\dbr})^o &\longto  \Sets\hspace{7cm}\vspace{-2mm}\\
&\SSS &\longmapsto  \big\{\text{Isomorphism classes of }(\CL_+,\delta)\colon\;\text{where }\CL_+~\text{is an }\nonumber\\ 
& & ~~~~~~~~~~~~~ \text{$L^+\BP$-torsor over $\SSS$ and a trivialization}~\delta\colon  \CL \to LP_S\nonumber\\
& & ~~~~~~~~~~~~~~~~~~~~~~~~~~~~~~~~\text{of the associated $LP$-torsor}~\CL~\text{over $S$}\big\}.\nonumber 
\end{eqnarray}
\noindent
For details see \cite[Remark 4.2, Theorem 4.4]{AH_Local} and Proposition \ref{PropRigidityLocal}.\\
Note further that for an $L^+\BP$-torsor $\CL_{0,+}$ over $S$, one may further define a twisted variant $\CM(\CL_{0,+})$ of the above functor, which assigns to a scheme $T$ over $S$, the set of isomorphism classes of tuples $(\CL_+,\delta)$ consisting of an $L^+\BP$-torsor $\CL_+$ over $T$ and an isomorphism $\delta\colon  \CL \to \CL_{0,+,T}$.

\end{remark}
}
\noindent
Now we construct the following uniformization map for $\scrH^1(C,\FG)$.

\begin{lemma}\label{LemUnifMap}

Let $S$ be a scheme in $\Nilp_{\wh A_\ul \nu}$. Fix a $\FG$-bundle $\CG$ in $\scrH^1(C,G)(S)$. There is a uniformization map

$$
\Psi_\CG:\prod_{\nu_i} \CM(\L_{\nu_i}^+\CG) \to \scrH^1(C,\FG)|_S.
$$
\noindent
Furthermore this map is formally smooth.

\end{lemma}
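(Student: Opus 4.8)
The plan is to define $\Psi_\CG$ via the loop-group version of the Beauville--Laszlo gluing equivalence recalled in Remark~\ref{RemFormalTorsor}\ref{formal torsor e}, and then to prove formal smoothness by an infinitesimal lifting argument whose only substantial ingredient is the affineness of $C_T\setminus\Gamma_{\ul s_T}$ together with the smoothness of $\FG$.

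To construct $\Psi_\CG$, let $T$ be an $S$-scheme and $(\CL_{+,i},\delta_i)_i\in\prod_i\CM(\L_{\nu_i}^+\CG)(T)$. Here $\delta_i$ is an isomorphism $\L_{\nu_i}\CL_{+,i}\isoto\L_{\nu_i}(\L_{\nu_i}^+\CG)_T$ of the associated $LP_{\nu_i}$-torsors, and by the definition of the functor $\L_{\nu_i}$ in Remark~\ref{RemFormalTorsor}\ref{formal torsor d} its target equals $\L_{\nu_i}\bigl(\CG_T|_{\dot{(C_T)}_{\ul s}}\bigr)$. Thus $\bigl(\CG_T|_{\dot{(C_T)}_{\ul s}},\CL_{+,i},\delta_i\bigr)_i$ is an object of the groupoid $\mathcal{D}(\ul s_T)$ of Remark~\ref{RemFormalTorsor}\ref{formal torsor e}, and I would let $\Psi_\CG\bigl((\CL_{+,i},\delta_i)_i\bigr)$ be the $\FG$-bundle over $C_T$ obtained by applying a quasi-inverse of the equivalence $\scrH^1(C,\FG)(T)\isoto\mathcal{D}(\ul s_T)$. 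This is functorial in $T$, so it defines a morphism; by construction $\L_{\nu_i}^+$ of $\Psi_\CG((\CL_{+,i},\delta_i)_i)$ is $\CL_{+,i}$ and its restriction to $\dot{(C_T)}_{\ul s}=C_T\setminus\Gamma_{\ul s_T}$ is $\CG_T|_{\dot{(C_T)}_{\ul s}}$.

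For formal smoothness I would check the infinitesimal lifting criterion for an affine $T$ and a square-zero closed immersion $j\colon\bar T\hookrightarrow T$ in $\Nilp_{\wh A_\ul\nu}$ (the general nilpotent case then follows by induction on the nilpotence order). Given $(\bar{\CL}_{+,i},\bar\delta_i)_i\in\prod_i\CM(\L_{\nu_i}^+\CG)(\bar T)$, a $\FG$-bundle $\CG'$ over $C_T$, and an isomorphism $\beta\colon j^\ast\CG'\isoto\Psi_\CG((\bar{\CL}_{+,i},\bar\delta_i)_i)$, any preimage of this datum under $\Psi_\CG$ must have $\CL_{+,i}=\L_{\nu_i}^+\CG'$, which automatically lifts $\bar{\CL}_{+,i}$ because $j^\ast(\L_{\nu_i}^+\CG')=\L_{\nu_i}^+(j^\ast\CG')\cong\L_{\nu_i}^+\Psi_\CG((\bar{\CL}_{+,i},\bar\delta_i)_i)=\bar{\CL}_{+,i}$, the last equality by the construction of $\Psi_\CG$. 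Unravelling the equivalence of Remark~\ref{RemFormalTorsor}\ref{formal torsor e} and using its full faithfulness, producing the remaining data is then equivalent to lifting the isomorphism $\gamma_0\colon\CG'|_{\dot{(C_{\bar T})}_{\ul s}}\isoto\CG_{\bar T}|_{\dot{(C_{\bar T})}_{\ul s}}$ induced by $\beta$ to an isomorphism $\gamma\colon\CG'|_{\dot{(C_T)}_{\ul s}}\isoto\CG_T|_{\dot{(C_T)}_{\ul s}}$: one sets $\delta_i:=\L_{\nu_i}(\gamma)\circ\alpha_i'$ with $\alpha_i'$ the canonical gluing isomorphism attached to $\CG'$, and $\gamma$ itself provides the isomorphism $\Psi_\CG((\CL_{+,i},\delta_i)_i)\isoto\CG'$ compatible with $\beta$.

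The hard part is lifting $\gamma_0$, and I expect this to be the main obstacle. Set $X:=\dot{(C_T)}_{\ul s}=C_T\setminus\Gamma_{\ul s_T}$. The Isom-sheaf $\underline{\Isom}_X(\CG'|_X,\CG_T|_X)$ is a torsor under the automorphism group scheme $\underline{\Aut}_X(\CG_T|_X)$, an inner form of $\FG|_X$, which is smooth and affine over $X$ because $\FG$ is a smooth affine group scheme over $C$; hence $\underline{\Isom}_X(\CG'|_X,\CG_T|_X)$ is smooth over $X$. Since $S$, and hence $T$, lies in $\Nilp_{\wh A_\ul\nu}$, the relative Cartier divisor $\Gamma_{\ul s_T}$ is supported on $\bigcup_i\{\nu_i\}\times T$ and so meets every fibre of the smooth projective curve $C_T\to T$; therefore $X$ is affine over $T$, and as $T$ is affine, $X$ is affine. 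Then $\gamma_0$ is a section of the smooth $X$-scheme $\underline{\Isom}_X(\CG'|_X,\CG_T|_X)$ over the nilpotent thickening $\dot{(C_{\bar T})}_{\ul s}\hookrightarrow X$, and smoothness over the affine base $X$ lets it be extended to a section $\gamma$ over $X$. This produces the required lift and shows that $\Psi_\CG$ is formally smooth; this step is exactly where the smoothness and affineness of $\FG$ and the restriction to $\Nilp_{\wh A_\ul\nu}$ are used, and it is the function-field analogue of the appeal to Grothendieck--Messing theory in the Rapoport--Zink construction.
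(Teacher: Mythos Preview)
Your proof is correct. The construction of $\Psi_\CG$ via Beauville--Laszlo gluing matches the paper's exactly. For formal smoothness, however, the paper takes a different route: it reduces \'etale-locally on source and target to the case where $\L_{\nu_i}^+\CG$ is trivial (so that the source becomes a product of $\wh\SpaceFl_{\BP_{\nu_i}}$), and then simply appeals to the smoothness of $\FG$ together with $\scrH^1(C,\FG)$ being locally noetherian, without spelling out the lifting. Your direct argument---exploiting that $X=C_T\setminus\Gamma_{\ul s_T}$ is affine because $S\in\Nilp_{\wh A_{\ul\nu}}$ forces $\Gamma_{\ul s_T}$ to meet every fibre, and then lifting a section of the smooth Isom-scheme over the affine $X$---is more explicit and avoids the \'etale reduction altogether. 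It also makes transparent exactly where the two hypotheses (smoothness of $\FG$, and $S\in\Nilp_{\wh A_{\ul\nu}}$) are used.

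One small bookkeeping point: your formula $\delta_i:=\L_{\nu_i}(\gamma)\circ\alpha_i'$ has target $\L_{\nu_i}\dot{(\CG_T)}_{\ul s_T}$, whereas the definition of $\CM(\L_{\nu_i}^+\CG)$ requires $\delta_i$ to land in $\L_{\nu_i}(\L_{\nu_i}^+\CG)_T$; you should postcompose with $\alpha_{i,T}^{-1}$. This does not affect the argument.
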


\begin{proof}

Suppose that the $\FG$-bundle $\CG$ corresponds to the tuple $\left(\dot{(\CG)}_\ul s, (\L_{\nu_i}^+\CG, \alpha_i:\L_{\nu_i}\L_{\nu_i}^+\CG\to\L_{\nu_i}\dot{(\CG)}_\ul s)_i\right)$; see Remark \ref{RemFormalTorsor}\ref{formal torsor e}. To define the map $\Psi_\CG$, we send the tuple $( \CL_{+,\nu_i},\delta_{\nu_i}:\L_{\nu_i}\CL_{+,\nu_i}\tilde{\to}\L_{\nu_i}\L_{\nu_i}^+\CG )_i$ to the $\FG$-bundle associated with the tuple $(\dot{(\CG_T)}_{\ul s_T},(\CL_{+,\nu_i}, \alpha_i\circ\delta_{\nu_i}:\L_{\nu_i}\CL_{+,\nu_i} \to\L_{\nu_i}\dot{(\CG_T)}_{\ul s_T})_i)$.\\ 
To see that the resulting map $\Psi_\cG$ is formally smooth, first notice that we may reduce to the case where $\L_{\nu_i}\L_{\nu_i}^+\CG$ is trivial. This is because being formally smooth is \'etale local on the source and target. Then using the assumption that $\FG$ is smooth and $\scrH^1(C,\FG)$ is locally noetherian, one can easily argue by lifting criterion for smoothness.

\end{proof}

\subsection{Local Boundedness Conditions}

Here we recall the notion of \emph{local boundedness condition}, introduced in \cite[Definition~4.8]{AH_Local}, and we further explain the relation to the global boundedness condition which we introduced in section \ref{SubsecGLobBound}.\\
Fix an algebraic closure $\BaseOfD\dpl\zeta\dpr^\alg$ of $\BaseOfD\dpl\zeta\dpr$. Since its ring of integers is not complete we prefer to work with finite extensions of discrete valuation rings $R/\BaseOfD\dbl\zeta\dbr$ such that $R\subset\BaseOfD\dpl\zeta\dpr^\alg$. For such a ring $R$ we denote by $\kappa_R$ its residue field, and we let $\Nilp_R$ be the category of $R$-schemes on which $\zeta$ is locally nilpotent. We also set $\wh{\SpaceFl}_{\BP,R}:=\SpaceFl_\BP\whtimes_{\BaseOfD}\Spf R$ and $\wh{\SpaceFl}_\BP:=\wh{\SpaceFl}_{\BP,\BaseOfD\dbl\zeta\dbr}$. Before we can define (local) ``bounds'' we need to make the following observations.

\begin{definition}\label{DefEqClClosedInd}
(a) For a finite extension of discrete valuation rings $\BaseOfD\dbl\zeta\dbr\subset R\subset\BaseOfD\dpl\zeta\dpr^\alg$ we consider closed ind-subschemes $\wh Z_R\subset\wh{\SpaceFl}_{\BP,R}$. We call two closed ind-subschemes $\wh Z_R\subset\wh{\SpaceFl}_{\BP,R}$ and $\wh Z'_{R'}\subset\wh{\SpaceFl}_{\BP,R'}$ \emph{equivalent} if there is a finite extension of discrete valuation rings $\BaseOfD\dbl\zeta\dbr\subset\wt R\subset\BaseOfD\dpl\zeta\dpr^\alg$ containing $R$ and $R'$ such that $\wh Z_R\whtimes_{\Spf R}\Spf\wt R \,=\,\wh Z'_{R'}\whtimes_{\Spf R'}\Spf\wt R$ as closed ind-subschemes of $\wh{\SpaceFl}_{\BP,\wt R}$.

\medskip\noindent
(b) Let $\wh Z=[\wh Z_R]$ be an equivalence class of closed ind-subschemes $\wh Z_R\subset\wh{\SpaceFl}_{\BP,R}$ and let $G_{\wh Z}:=\{\gamma\in\Aut_{\BaseOfD\dbl\zeta\dbr}(\BaseOfD\dpl\zeta\dpr^\alg)\colon \gamma(\wh Z)=\wh Z\,\}$. We define the \emph{ring of definition $R_{\wh Z}$ of $\wh Z$} as the intersection of the fixed field of $G_{\wh Z}$ in $\BaseOfD\dpl\zeta\dpr^\alg$ with all the finite extensions $R\subset\BaseOfD\dpl\zeta\dpr^\alg$ of $\BaseOfD\dbl\zeta\dbr$ over which a representative $\wh Z_R$ of $\wh Z$ exists.
\end{definition}

\begin{definition}\label{DefBDLocal}
\begin{enumerate}
\item \label{DefBDLocal_A}
We define a (local) \emph{bound} to be an equivalence class $\wh Z:=[\wh Z_R]$ of closed ind-subschemes $\wh Z_R\subset\wh{\SpaceFl}_{\BP,R}$, such that all the ind-subschemes $\wh Z_R$ are stable under the left $L^+\BP$-action on $\SpaceFl_\BP$, and the special fibers $Z_R:=\wh Z_R\whtimes_{\Spf R}\Spec\kappa_R$ are quasi-compact subschemes of $\SpaceFl_\BP\whtimes_{\BaseOfD}\Spec\kappa_R$. The ring of definition $R_{\wh Z}$ of $\wh Z$ is called the \emph{reflex ring} of $\wh Z$. Since the Galois descent for closed ind-subschemes of $\SpaceFl_\BP$ is effective, the $Z_R$ arise by base change from a unique closed subscheme $Z\subset\SpaceFl_\BP\whtimes_\BaseOfD\kappa_{R_{\wh Z}}$. We call $Z$ the \emph{special fiber} of the bound $\wh Z$. It is a projective scheme over $\kappa_{R_{\wh Z}}$ by~\cite[Remark 4.3]{AH_Local} and \cite[Lemma~5.4]{H-V}, which implies that every morphism from a quasi-compact scheme to an ind-projective ind-scheme factors through a projective subscheme.
\item \label{DefBDLocal_B}
Let $\wh Z$ be a bound with reflex ring $R_{\wh Z}$. Let $\CL_+$ and $\CL_+'$ be $L^+\BP$-torsors over a scheme $S$ in $\Nilp_{R_{\wh Z}}$ and let $\delta\colon \CL\isoto\CL'$ be an isomorphism of the associated $L\genericG$-torsors. We consider an \'etale covering $S'\to S$ over which trivializations $\alpha\colon\CL_+\isoto(L^+\BP)_{S'}$ and $\alpha'\colon\CL'_+\isoto(L^+\BP)_{S'}$ exist. Then the automorphism $\alpha'\circ\delta\circ\alpha^{-1}$ of $(L\genericG)_{S'}$ corresponds to a morphism $S'\to L\genericG\whtimes_\BaseOfD\Spf R_{\wh Z}$. We say that $\delta$ is \emph{bounded by $\wh Z$} if for any such trivialization and for all finite extensions $R$ of $\BaseOfD\dbl\zeta\dbr$ over which a representative $\wh Z_R$ of $\wh Z$ exists the induced morphism $S'\whtimes_{R_{\wh Z}}\Spf R\to L\genericG\whtimes_\BaseOfD\Spf R\to \wh{\SpaceFl}_{\BP,R}$ factors through $\wh Z_R$. Furthermore we say that a local $\BP$-shtuka $(\CL_+, \tauLoc)$ is \emph{bounded by $\wh Z$} if the isomorphism $\tauLoc$ is bounded by $\wh Z$. Assume that $\wh Z=\CS(\omega)\whtimes_\BaseOfD\Spf \BaseOfD\dbl\zeta\dbr$ for a \emph{Schubert variety} $\CS(\omega)\subseteq \CF\ell_\BP$ , with $\omega\in \wt{W}$; see \cite{PR2}. Then we say that $\delta$ is \emph{bounded by $\omega$}. 

\item\label{DefBDLocal_C}
Fix an $n$-tuple $\ul\nu=(\nu_i)$ of places on the curve $C$ with $\nu_i\ne\nu_j$ for $i\ne j$. Let $\wh Z_\ul\nu:=(\wh Z_{\nu_i})_i$ be an $n$-tuple of bounds in the above sense and set $R_{\wh Z_\ul\nu}:=R_{\wh Z_{\nu_1}}\hat{\otimes}_{\BF_q}\dots \hat{\otimes}_{\BF_q} R_{\wh Z_{\nu_n}}$. We say that a tuple $(\CG,\CG',\ul s,\phi)$ in $Hecke_n(C,\FG)^\ul\nu\times_{\wh A_\ul\nu}\Spf R_{\wh Z_\ul\nu}$ is bounded by $\wh Z_\ul\nu$ if for each $i$ the associated isomorphism $\wh\phi_{\nu_i}:=\L_{\nu_i}(\phi_i):\L_{\nu_i}\CG'\to \L_{\nu_i}\CG$ is bounded by $\wh Z_{\nu_i}$ in the above sense. We denote the resulting formal stack by $Hecke_n^{\wh Z_\ul\nu}(C,\FG)^\ul\nu$, and sometimes we abbreviate this notation by $Hecke_n^{\wh Z_\ul\nu}$.  This accordingly defines boundedness condition on global $\FG$-shtukas. Equivalently, one says that a global $\FG$-shtuka $\ul\CG$ in $\nabla_n \scrH^1 (C,\FG)^{\ul\nu}(S)$ is bounded by $\wh Z_\ul\nu$ if for each $i$ the associated local $\BP_{\nu_i}$-shtuka $\wh{\Gamma}_{\nu_i}(\ul\CG)$ under the global-local functor $\wh{\Gamma}_{\nu_i}(-)$ is bounded by $\wh Z_{\nu_i}$. We denote by $\nabla_n^{\wh Z_\ul\nu}\scrH^1(C,\FG)^\ul\nu$ the formal substack obtained by imposing the boundedness condition $\wh Z_\ul\nu$. 
\item\label{DefBDLocal_D}
Assume that the bound $\wh Z_\ul\nu$ comes from a tuple of affine Schubert varieties $\CS(\ul\omega):=(\CS(\omega_i))_i$, where $\ul\omega:=(\omega_i)_i\in\prod_{i=1}^n\wt W_i$. Here $\wt W_i$ denotes the Iwahori-Weyl group corresponding to $P_{\nu_i}$; see Definition \ref{DefIwahori-Weyl}. Then we use the notation $Hecke_n^\ul\omega (C,\FG)^\ul\nu$ (resp. $\nabla_n^{\ul\omega}\scrH^1(C,\FG)^\ul\nu$) for the corresponding moduli stack obtained by imposing the bound $\wh Z_\ul\nu$.

\end{enumerate}
\end{definition}

\forget
{
\begin{remark}\label{RemRelationBetweenGBandLB}
Fix an $n$-tuple $\ul\nu=(\nu_i)_i$ of places on the curve $C$ with $\nu_i\ne\nu_j$ for $i\ne j$. Let $\CZ$ be a global bound in the sense of Definition \ref{DefGlobalBC}. Let $Z:=Z_\ul K$ be a representative of $\CZ$ over $n$-tuple of fields $\ul K$ such that $\prod_i\iota(K_i)$ is minimal.
Let $Z_i$ denote the image of $Z$ under the morphism $GR_n\times_{C^n} \wt C_\ul K\to GR_1\times_{C} \wt C_{K_i}$ given by $(\ul s:=(s_i)_i, \wh \CG, \dot{\epsilon})\mapsto(s_i,\wh \CG|_{\BD(\Gamma_{s_i})}, \dot{\epsilon}|_{\dot{\BD}(\Gamma_{s_i})})$, see Remark \ref{RemarkBeauvilleLaszlo}. Note that $Z_i$ is a closed \comment{here we require GR to be projective}subscheme of $GR_1\times_C \wt C_{K_i}$ which is stable under the  action of the global loop group $\FL_1^+\FG$. Consider the following Cartesian diagram

$$
\xymatrix@!0{ &&Z_i\ar[rr]&&& GR_1\times_C \wt C_{K_i} \ar[rrrr]\ar[dd] &&&& GR_1 \ar[dd] \\ 
\coprod\wh{Z}_i^j \ar[rr]\ar[urr]&&&~~\coprod_j \CF \ell_{{\BP_{\nu_i}}, \wh A_{\nu_i^j}}\ar[urr]\ar[rrrr]\ar[dd] &&&& \cF\ell_{\BP_{\nu_i}} \ar[urr]\ar[dd] \\ 
&&&&& \wt C_{K_i} \ar[rrrr] &&&& C. \\ 
&&&\coprod_j\Spf \wh A_{\nu_i^j} \ar[rrrr]\ar[urr] &&&& \Spf \wh A_{\nu_i} \ar[urr] } 
$$

Let $\wh{Z}_i$ denote an arbitrary element of $\{\wh{Z}_i^j\}_j$. Now we assign to a global bound $\CZ$, the tuple $([\wh{Z}_i])_i$ of local bounds. Regarding Remark  \ref{RemReflexRing}\ref{RemReflexRing_D} one may check that this assignment is well-defined. 

\end{remark}

\begin{remark}\label{RemAltRelationBetweenGBandLB}
Fix an $n$-tuple $\ul\nu=(\nu_i)_i$ of pairwise distinct places on the curve $C$. Let $\CZ$ be a global bound in the sense of Definition \ref{DefAltGlobalBC}. Let $Z:=Z_K$ be a representative of $\CZ$ over a field $K$ which has minimal inseparable degree over $Q$.
Let $Z_i$ denote the image of $Z$ under the morphism $GR_n\times_{C^n} \wt C_K^n\to GR_1\times_{C} \wt C_K$ given by $(\ul s:=(s_i)_i, \wh \CG, \dot{\epsilon})\mapsto(s_i,\wh \CG|_{\BD(\Gamma_{s_i})}, \dot{\epsilon}|_{\dot{\BD}(\Gamma_{s_i})})$; see Remark \ref{RemarkBeauvilleLaszlo}. Note that $Z_i$ is a closed \forget{\comment{here we require GR to be projective}}subscheme of $GR_1\times_C \wt C$ which is stable under the  action of the global loop group $\FL_1\FG$. Consider the following diagram

$$
\xymatrix@!0{ &&Z_i\ar[rr]&&& GR_1\times_C \wt C_K \ar[rrrr]\ar[dd] &&&& GR_1 \ar[dd] \\ 
\coprod\wh{Z}_i^j \ar[rr]\ar[urr]&&&~~\coprod_j \CF \ell_{{\BP_{\nu_i}},\wt A_{\nu_i^j}}\ar[urr]\ar[rrrr]\ar[dd] &&&& \cF\ell_{\BP_{\nu_i},\wh A_{\nu_i}} \ar[urr]\ar[dd] \\ 
&&&&& \wt C_K \ar[rrrr] &&&& C \\ 
&&&\coprod_j\Spf \wt A_{\nu_i^j} \ar[rrrr]\ar[urr] &&&& \Spf \wh A_{\nu_i} \ar[urr] } 
$$

We fix a place $\nu_i'$ of the reflex field $Q_\CZ$ which lies above the place $\nu_i$. Now we take an element $\wh{Z}_{K,i}:=\wh Z_i^j$ from the set $\{\wh{Z}_i^j\}_j$ such that the corresponding place $\nu_i^j$ lies above $\nu_i'$. Now we assign to a global bound $\CZ$, the tuple $([\wh Z_{K,i}])_i$ of local bounds. 
\end{remark}

}

\begin{proposition}

Fix an $n$-tuple $\ul\nu=(\nu_i)_i$ of pairwise distinct places on the curve $C$. Let $\CZ$ be a global bound in the sense of Definition \ref{DefAltGlobalBC}. Furthermore, let $\ul\nu'=(\nu_i')_i$ be an $n$-tuple of places on the reflex field $Q_\CZ$, such that $\nu_i'$ lies over $\nu_i$. Then one can associate an $n$-tuple $(\wh Z_{\nu_i})$ of local bounds to the global bound $\CZ$.

\end{proposition}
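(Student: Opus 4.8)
The plan is to transport the global bound $\CZ$ to an $n$-tuple of local bounds by restricting it along the Beauville--Laszlo maps, following closely the computation of the local reflex ring in \cite[Remark~4.7]{AH_Local}. First, using Remark~\ref{RemAltReflexRing}, I would fix a representative $Z_K\subset GR_n\times_{C^n}\wt C_K^n$ of $\CZ$ over a finite extension $Q\subseteq K\subset Q^\alg$ that contains $Q_\CZ$ (for instance the one of minimal inseparability degree, for which $Q_\CZ=K^{G_\CZ}$), so that the finite morphism $\wt C_K\to C$ factors through $C_\CZ=\wt C_{Q_\CZ}$; as for local bounds I shall tacitly require the fibres of $Z_K$ over $\wt C_K$ to be quasi-compact. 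By Remark~\ref{RemarkBeauvilleLaszlo}, for each $i$ the assignment $(\CG,\ul s,\epsilon)\mapsto\bigl(s_i,\CG|_{\BD(\Gamma_{s_i})},\epsilon|_{\dot{\BD}(\Gamma_{s_i})}\bigr)$ defines a morphism $r_i\colon GR_n\to GR_1$ lying over the $i$-th projection $C^n\to C$, and $r_i$ is equivariant for the restriction homomorphism $\FL_n^+\FG\to\FL_1^+\FG$, $(\ul s,\gamma)\mapsto\bigl(s_i,\gamma|_{\BD(\Gamma_{s_i})}\bigr)$, which is surjective locally on $C$. Composing $Z_K\hookrightarrow GR_n\times_{C^n}\wt C_K^n$ with $r_i$ and with the $i$-th projection $\wt C_K^n\to\wt C_K$ and taking the scheme-theoretic image, I obtain a quasi-compact closed subscheme $Z_{K,i}:=\overline{r_i(Z_K)}\subset GR_1\times_C\wt C_K$; since the $\FL_1^+\FG$-action on $GR_1$ is by isomorphisms and hence commutes with scheme-theoretic images, the $\FL_n^+\FG$-stability of $Z_K$ forces $Z_{K,i}$ to be $\FL_1^+\FG$-stable.

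Now fix $i$ and set $\BaseOfD:=\BF_{\nu_i}$, so that $\BP_{\nu_i}=\FG\times_C\Spec\wh A_{\nu_i}$ is a smooth affine group scheme over $\BaseOfD\dbl z\dbr$. By Remark~\ref{RemGlobAffGrass} and the decomposition in Remark~\ref{RemFormalTorsor}\ref{formal torsor b}, each component of $GR_1\whtimes_C\Spf\wh A_{\nu_i}$ is identified with $\wh\SpaceFl_{\BP_{\nu_i},\wh A_{\nu_i}}:=\SpaceFl_{\BP_{\nu_i}}\whtimes_{\BaseOfD}\Spf\wh A_{\nu_i}$. For a place $\nu_i^{(j)}$ of $\wt C_K$ above $\nu_i$, the completed local ring $\wt A_j:=\wh\CO_{\wt C_K,\nu_i^{(j)}}$ is a finite extension of the discrete valuation ring $\wh A_{\nu_i}\cong\BaseOfD\dbl z\dbr$, and I would pull $Z_{K,i}$ back along $\Spf\wt A_j\to\wt C_K$ and restrict to the relevant component, obtaining a closed ind-subscheme $\wh Z_i^{(j)}\subset\wh\SpaceFl_{\BP_{\nu_i},\wt A_j}$. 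By construction $\wh Z_i^{(j)}$ is stable under the left $L^+\BP_{\nu_i}$-action and has quasi-compact special fibre, so it is a local bound in the sense of Definition~\ref{DefBDLocal}\ref{DefBDLocal_A}. As $Q_\CZ\subseteq K$, there is at least one place $\nu_i^{(j)}$ of $\wt C_K$ above the fixed place $\nu_i'$ of $Q_\CZ$; choosing such a $\nu_i^{(j)}$ I set $\wh Z_{\nu_i}:=[\wh Z_i^{(j)}]$, and the required tuple is $(\wh Z_{\nu_i})_i$.

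It remains to check that $\wh Z_{\nu_i}$ depends only on $\CZ$ and on $\nu_i'$. Independence of the chosen representative $Z_K$ would follow by base-changing any two representatives to a common finite extension of $Q$, where they become equal by Definition~\ref{DefAltGlobalBC}\ref{DefAltGlobalBC_A}; their $i$-th scheme-theoretic images, and then their pullbacks to the formal discs at places over $\nu_i'$, agree, so the resulting local bounds are equivalent by Definition~\ref{DefEqClClosedInd}(a). Independence of the chosen place $\nu_i^{(j)}$ above $\nu_i'$ I would treat exactly as in \cite[Remark~4.7]{AH_Local}: if $\wt K$ denotes the normal closure of $K$, any two such places are conjugate over $Q_\CZ$ by an element $\gamma\in\Gal(\wt K/Q_\CZ)$, and by the description of $\Gal(\wt K/Q_\CZ)$ in Remark~\ref{RemAltReflexRing} such a $\gamma$ carries $Z_{K,i}\times_{\wt C_K}\wt C_{\wt K}$ to itself; choosing compatible embeddings of the two completed local rings into $\BaseOfD\dpl z\dpr^\alg$, the automorphism induced by $\gamma$ identifies the two completions of $Z_{K,i}$, so the corresponding closed ind-subschemes of $\wh\SpaceFl_{\BP_{\nu_i}}$ lie in a single equivalence class.

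The step I expect to be the main obstacle is the geometric input used above: one must verify that, up to the $\deg\nu_i$-fold disjoint-union phenomenon of Remark~\ref{RemFormalTorsor}\ref{formal torsor b}, the base change $GR_1\whtimes_C\Spf\wh A_{\nu_i}$ is indeed $\wh\SpaceFl_{\BP_{\nu_i},\wh A_{\nu_i}}$, and---more delicately---that forming the formal completion of $GR_1\times_C\wt C_K$ along $\nu_i^{(j)}$ commutes with the scheme-theoretic image defining $Z_{K,i}$, so that quasi-compactness of the special fibre is preserved and $\wh Z_i^{(j)}$ genuinely qualifies as a local bound; this is precisely where the (otherwise implicit) quasi-compactness hypothesis on $Z_K$ enters. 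A secondary difficulty is the decomposition-group bookkeeping required to identify the reflex ring of $\wh Z_{\nu_i}$ with the completion of $C_\CZ$ at $\nu_i'$, which is the point at which fixing the place $\nu_i'$, rather than merely $\nu_i$, is genuinely needed.
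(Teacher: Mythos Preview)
Your proposal is correct and follows essentially the same approach as the paper: take the image of $Z_K$ under the $i$-th restriction map $GR_n\times_{C^n}\wt C_K^n\to GR_1\times_C\wt C_K$, complete at a place $\nu_i^{(j)}$ of $\wt C_K$ lying over the fixed place $\nu_i'$ of $Q_\CZ$, and use that $K/Q_\CZ$ is separable (from the minimal-inseparability choice) to show the result is independent of the chosen place and representative. Your treatment is in fact more explicit than the paper's, which compresses the Galois-conjugacy step into the single remark that separability of $Q_\CZ\subset K$ makes the assignment well-defined, and which leaves the identification $GR_1\whtimes_C\Spf\wh A_{\nu_i}\cong\wh\SpaceFl_{\BP_{\nu_i},\wh A_{\nu_i}}$ and the quasi-compactness issue entirely implicit.
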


\begin{proof}
Let $Z:=Z_K$ be a representative of $\CZ$ over a field $K$ which has minimal inseparable degree over $Q$.
Let $Z_i$ denote the image of $Z$ under the morphism $GR_n\times_{C^n} \wt C_K^n\to GR_1\times_{C} \wt C_K$ given by $(\ul s:=(s_i)_i, \wh \CG, \dot{\epsilon})\mapsto(s_i,\wh \CG|_{\BD(\Gamma_{s_i})}, \dot{\epsilon}|_{\dot{\BD}(\Gamma_{s_i})})$; see Remark \ref{RemarkBeauvilleLaszlo}. Note that $Z_i$ is a closed \forget{\comment{here we require GR to be projective}}subscheme of $GR_1\times_C \wt C$ which is stable under the action of the global loop group $\FL_1^+\FG$. Let $\wh Z_i^j$ denote the scheme defined by the following Cartesian diagram

$$
\xymatrix@!0{ &&Z_i\ar[rr]&&& GR_1\times_C \wt C_K \ar[rrrr]\ar[dd] &&&& GR_1 \ar[dd] \\ 
\coprod\wh{Z}_i^j \ar[rr]\ar[urr]&&&~~\coprod_j \CF \ell_{{\BP_{\nu_i}},\wt A_{\nu_i^j}}\ar[urr]\ar[rrrr]\ar[dd] &&&& \cF\ell_{\BP_{\nu_i},\wh A_{\nu_i}} \ar[urr]\ar[dd] \\ 
&&&&& \wt C_K \ar[rrrr] &&&& C \\ 
&&&\coprod_j\Spf \wt A_{\nu_i^j} \ar[rrrr]\ar[urr] &&&& \Spf \wh A_{\nu_i} \ar[urr] } 
$$

Now we choose an element $\wh{Z}_{K,i}:=\wh Z_i^j$ from the set $\{\wh{Z}_i^j\}_j$ such that the corresponding place $\nu_i^j$ lies above $\nu_i'$. We assign to a global bound $\CZ$, the tuple $([\wh Z_{K,i}])_i$ of local bounds. Note that since the extension $Q_\CZ\subset K$ is separable, see Remark~\ref{RemAltReflexRing}, this assignment is independent of the choice of $\wh{Z}_{K,i}$. The fact that the assignment is independent of the choice of the representative representative $Z_K$ of $\CZ$ is obvious.

\end{proof}

\subsection{The Local Model Theorem For $\nabla_n^{H,\wh{Z}_\nu}\scrH^1(C,\FG)^\nu$}\label{SubsectionLMTII}

\noindent
Before constructing the local model diagram for the moduli stacks of global $\FG$-shtukas, let us first treat the case of Hecke stacks.

\begin{definition}\label{Hecketilda}
Let $D\subset C$ be a finite subscheme, disjoint from $\ul\nu$. We denote by $\wt{Hecke}_{n,D}(C,\FG)^\ul\nu$ the formal stack whose $S$-points parametrize tuples $\left(((\CG,\psi),(\CG',\psi'),\ul s,\tau_i),(\epsilon_i)_i\right)$, consisting of 
\begin{enumerate}
\item[i)] $((\CG,\psi),(\CG',\psi'),\ul s,\tau)$ in $Hecke_{n,D}(C,\FG)^\ul\nu(S)$ and
\item[ii)] trivializations $\epsilon_i:\L_{\nu_i}^+(\CG')\tilde{\to} L^+\BP_{\nu_i,S}$ of the associated  $L^+\BP_{\nu_i}$-torsors $\L_{\nu_i}^+(\CG')$; see Remark~\ref{RemFormalTorsor}\ref{formal torsor c}.
\end{enumerate}
\noindent

Moreover, for a bound $\wh Z_{\ul\nu}:=(\wh{Z}_{\nu_i})_i$, we denote by $\wt{Hecke}_{n,D}^{\wh Z_{\ul\nu}}(C,\FG)^\ul\nu$ the formal substack obtained by imposing the bound $\wh Z_{\ul\nu}$ to the isomorphism $\tau$. Since we fixed the curve $C$ and the group $\FG$, and moreover, the number of characteristic places is implicit in $\ul\nu$, we sometimes drop them from our notation and we write $\wt{Hecke}_D^\ul\nu$ (resp. $\wt{Hecke}_{D}^{\wh Z_{\ul\nu}}$) instead of $\wt{Hecke}_{n,D}(C,\FG)^\ul\nu$ (resp. $\wt{Hecke}_{n,D}^{\wh Z_{\ul\nu}}(C,\FG)^\ul\nu$). We further drop the subscript $D$ when it is empty.

\end{definition}
\bigskip

\noindent
Let $\wh{Z}_{\nu_i,R_{\nu_i}}$ be a representative of $\wh Z_{\nu_i}$ over $R_{\nu_i}$. Set $R_{\wh Z_\ul \nu} :=R_{\wh Z_{\nu_1}}\hat{\otimes}_{\BF_q}\dots \hat{\otimes}_{\BF_q} R_{\wh Z_{\nu_n}}$ and $R_\ul \nu :=R_{\nu_1}\hat{\otimes}_{\BF_q}\dots \hat{\otimes}_{\BF_q}R_{\nu_n}$ and let $Hecke_{D,R_\ul\nu}^{\wh Z_{\ul\nu}}$ (resp. $\wt{Hecke}_{D,R_{\ul\nu}}^{\wh Z_\ul\nu}$) denote the base change $Hecke_D^{\wh Z_{\ul\nu}}\times_{R_{\wh Z_\ul\nu}}R_\ul\nu$ (resp. $\wt{Hecke}_D^{\wh Z_{\ul\nu}} \times_{R_{{\wh Z_\ul\nu}}}R_\ul\nu$).

\begin{proposition}\label{PropLocalModelHecke}
There is a roof of morphisms 
\begin{equation}\label{HeckeRoof}
\xygraph{
!{<0cm,0cm>;<1cm,0cm>:<0cm,1cm>::}
!{(0,0) }*+{\wt{Hecke}_{D,R_{\ul\nu}}^{\wh Z_\ul\nu}}="a"
!{(-1.5,-1.5) }*+{Hecke_{D,R_\ul\nu}^{\wh Z_{\ul\nu}}}="b"
!{(1.5,-1.5) }*+{\prod_i \wh Z_{\nu_i,R_{\nu_i}}.}="c"
"a":^{\pi}"b" "a":_{f}"c"
}
\end{equation}  
\noindent
Furthermore, in the above roof, the formal stack $\wt{Hecke}_{D,R_{\ul\nu}}^{\wh Z_\ul\nu}$ is an $\prod_i L^+\BP_{\nu_i}$-torsor over $Hecke_{D,R_\ul\nu}^{\wh Z_{\ul\nu}}$ under the projection $\pi$. Moreover for a geometric point $y$ of $Hecke_{D,R_\ul\nu}^{\wh Z_{\ul\nu}}$, the $\prod_i L^+\BP_{\nu_i}$-torsor $\pi:\wt{Hecke}_{D,R_{{\wh Z_\ul\nu}}}^{\wh Z_{\ul\nu}}\to Hecke_{D,R_\ul\nu}^{\wh Z_{\ul\nu}}$ admits a section $s$, over an \'etale neighborhood of $y$, such that the composition $f\circ s$ is formally smooth\forget{ of relative dimension $d:=\dim \scrH^1(C,\FG)$}. 
\end{proposition}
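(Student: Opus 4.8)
The plan is to present $\wt{Hecke}_{D,R_\ul\nu}^{\wh Z_\ul\nu}$ as the enrichment of $Hecke_{D,R_\ul\nu}^{\wh Z_\ul\nu}$ by trivializations of the local torsors $\L^+_{\nu_i}(\CG')$, and to read off the three assertions from the loop-group Beauville--Laszlo gluing of Remark~\ref{RemFormalTorsor}\ref{formal torsor e} together with the smoothness of $\scrH_D^1(C,\FG)$ (Remark~\ref{RemBun_G}). Applying that gluing equivalence to both bundles of a Hecke datum and transporting the extension data through $\tau$, one identifies a point of $Hecke_{n,D}(C,\FG)^{\ul\nu}$ over $S$ with a tuple $\bigl(\CG',\ul s,(\CL_{+,i},\alpha_i)_i\bigr)$, where $\CG'$ is an $\FG$-bundle on $C_S$ with $D$-level structure, $\CL_{+,i}$ an $L^+\BP_{\nu_i}$-torsor on $S$, and $\alpha_i\colon\L_{\nu_i}\CL_{+,i}\isoto\L_{\nu_i}\CG'$; here $\CG$ is reglued from $\CG'$ and the $\CL_{+,i}$ (so $\L^+_{\nu_i}\CG=\CL_{+,i}$) and $\tau$ is the canonical identification on $C_S\setminus\Gamma_\ul s$. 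A point of $\wt{Hecke}_D^{\wh Z_\ul\nu}$ carries in addition trivializations $\epsilon_i\colon\L^+_{\nu_i}\CG'\isoto L^+\BP_{\nu_i,S}$, and $f$ is the morphism sending it to $\bigl(\CL_{+,i},\,\L_{\nu_i}(\epsilon_i)\circ\alpha_i\bigr)_i\in\prod_i\wh{\SpaceFl}_{\BP_{\nu_i}}$; since $\tau$ is bounded by $\wh Z_\ul\nu$ this lands in $\prod_i\wh Z_{\nu_i,R_{\nu_i}}$, so the roof is well defined.

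For the torsor statement, $\pi$ forgets the $(\epsilon_i)_i$, and $\prod_iL^+\BP_{\nu_i}$ acts freely on $\wt{Hecke}_D^{\wh Z_\ul\nu}$ by postcomposition on the $\epsilon_i$ with quotient $Hecke_D^{\wh Z_\ul\nu}$; as the bound on $\tau$ does not involve the $\epsilon_i$, one has $\wt{Hecke}_D^{\wh Z_\ul\nu}=\wt{Hecke}_D^{\ul\nu}\times_{Hecke_D^{\ul\nu}}Hecke_D^{\wh Z_\ul\nu}$. Since $\L^+_{\nu_i}\CG'$ is trivial \'etale locally on $S$ by Remark~\ref{RemFormalTorsor}\ref{formal torsor a}, $\pi$ is a representable $\prod_iL^+\BP_{\nu_i}$-torsor, and this persists after base change to $R_\ul\nu$. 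In particular, for a geometric point $y$ of $Hecke_{D,R_\ul\nu}^{\wh Z_\ul\nu}$ the torsor $\pi$ admits a section $s$ over some \'etale neighbourhood $U$ of $y$; I will take $U$ and $s$ so that the trivializations $\epsilon_i=\epsilon_i(\CG',\ul s)$ produced by $s$ depend only on $(\CG',\ul s)$, for instance by pulling $s$ back from an \'etale neighbourhood of the image of $y$ in $\scrH_D^1(C,\FG)\whtimes_{\BF_q}\Spf\wh A_\ul\nu$.

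It remains to show that $f\circ s$ is formally smooth, which I verify by the infinitesimal lifting criterion. Let $\Spec A_0\hookrightarrow\Spec A$ be a nilpotent closed immersion equipped with a morphism $\Spec A_0\to U$, corresponding to $\bigl(\CG'_0,\ul s_0,(\CL_{+,i,0},\alpha_{i,0})_i\bigr)$ and $\epsilon_{i,0}=\epsilon_i(\CG'_0,\ul s_0)$, together with a lift $\Spec A\to\prod_i\wh Z_{\nu_i,R_{\nu_i}}$ of $f\circ s$ of it, given by $(\CL'_{+,i},\delta'_i)_i$ with $\delta'_{i,0}=\L_{\nu_i}(\epsilon_{i,0})\circ\alpha_{i,0}$ and whose characteristic component furnishes the lift $\ul s$ of $\ul s_0$. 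I construct a lift $\Spec A\to U$: first lift $\CG'_0$ to an $\FG$-bundle $\CG'$ with $D$-level structure over $A$ --- possible because $\scrH_D^1(C,\FG)$ is smooth, and this is the only choice in the construction, producing the relative dimension $\dim\scrH^1(C,\FG)$; then set $\epsilon_i:=\epsilon_i(\CG',\ul s)$ (which reduces to $\epsilon_{i,0}$), $\CL_{+,i}:=\CL'_{+,i}$, and $\alpha_i:=\L_{\nu_i}(\epsilon_i)^{-1}\circ\delta'_i$ (which reduces to $\alpha_{i,0}$, as $\delta'_{i,0}=\L_{\nu_i}(\epsilon_{i,0})\circ\alpha_{i,0}$). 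By Remark~\ref{RemFormalTorsor}\ref{formal torsor e} the tuple $\bigl(\CG',\ul s,(\CL_{+,i},\alpha_i)_i\bigr)$ together with the $\epsilon_i$ is a point of $\wt{Hecke}_D^{\ul\nu}(A)$ reducing to the given $A_0$-point, hence lying in $U$ by openness; it is bounded by $\wh Z_\ul\nu$, because tested through $\epsilon_i$ the relevant class at $\nu_i$ is $\L_{\nu_i}(\epsilon_i)\circ\alpha_i=\delta'_i$, which factors through $\wh Z_{\nu_i,R_{\nu_i}}$ by hypothesis and the bound is independent of the trivialization (Definition~\ref{DefBDLocal}\ref{DefBDLocal_B}); and $f\circ s$ sends it to $(\CL'_{+,i},\delta'_i)$, the prescribed $A$-point. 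This gives the required lift, so $f\circ s$ is formally smooth.

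The technical core --- and essentially the only input beyond bookkeeping --- is the identification of $Hecke_{n,D}(C,\FG)^{\ul\nu}$ with ``an $\FG$-bundle with $D$-level structure plus local modifications at the $\nu_i$'' via Remark~\ref{RemFormalTorsor}\ref{formal torsor e} and the compatibility of the global bound $\wh Z_\ul\nu$ with the local bounds $\wh Z_{\nu_i}$; once these are in hand the deformation problem decouples into a free deformation of $\CG'$ and the rigidly prescribed local data, and the sole obstruction, deforming $\CG'$, vanishes precisely because $\FG$ is smooth --- which is also the reason the hypothesis on $\FG$ cannot be weakened.
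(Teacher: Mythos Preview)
Your argument is correct and rests on the same two ingredients as the paper's proof: the loop-group Beauville--Laszlo gluing (Remark~\ref{RemFormalTorsor}\ref{formal torsor e}) and the smoothness of $\scrH^1(C,\FG)$. The organization differs slightly: you first reparametrize a Hecke point as $(\CG',\ul s,(\CL_{+,i},\alpha_i)_i)$ and then the lifting becomes transparent (lift $\CG'$, and the rest is forced by the given $(\CL'_{+,i},\delta'_i)$), whereas the paper keeps the original description $(\CG,\CG',\ul s,\tau)$, lifts $\CG$ and $\CG'$ separately, and then repairs $\CG'$ to $\CG''$ via the uniformization map $\Psi_\CG$ of Lemma~\ref{LemUnifMap}. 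Your insistence that the section $s$ be pulled back from an \'etale neighbourhood in $\scrH_D^1(C,\FG)\whtimes_{\BF_q}\Spf\wh A_\ul\nu$, so that $\epsilon_i$ depends only on $(\CG',\ul s)$, is exactly what is needed to avoid a circularity in the lifting step, and is effectively what the paper does when it writes ``$\epsilon_{i,S}'$ induced by $\epsilon_i^\CA$''. One small wording point: ``hence lying in $U$ by openness'' should rather be ``hence lifting uniquely to $U$ by formal \'etaleness of $U\to Hecke_{D,R_\ul\nu}^{\wh Z_\ul\nu}$''.
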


\begin{proof}  

Forgetting the trivialization $\epsilon_i$, it is clear that the formal stack $\wt{Hecke}_{D,R_{\ul\nu}}^{\wh Z_\ul\nu}$ is an $\prod_i L^+\BP_{\nu_i}$-torsor over $Hecke_{D,R_{\ul\nu}}^{\wh Z_\ul\nu}$.\\
\noindent
 Recall that $\wh{\SpaceFl}_{\BP}$ represents the functor (\ref{EqRZFunctor}). Consider a tuple $((\CG,\CG',\ul s,\tau), (\epsilon_i)_i)$ in $\wt{Hecke}^\ul\nu$ and let $\alpha_i$ (resp. $\alpha_i'$) denote the canonical isomorphism $\L_{\nu_i} \L_{\nu_i}^+\CG\tilde{\to}\L_{\nu_i}\dot{(\CG)}_\ul s$ (resp. $\L_{\nu_i} \L_{\nu_i}^+\CG'\tilde{\to}\L_{\nu_i}\dot{(\CG')}_\ul s$). We define the morphism $\wt{Hecke}^\ul\nu \to \prod_{\nu_i} \wh{\SpaceFl}_{\BP_{\nu_i}},$
which assigns $(\L_{\nu_i}^+\CG ,\phi_i:=\L_{\nu_i}(\epsilon_i)\circ\alpha_i'^{-1}\circ \L_{\nu_i}(\tau)^{-1}\circ \alpha_i)$ to the tuple $((\CG,\CG',\ul s,\tau), (\epsilon_i)_i)$. This morphism\forget{factors through $\prod_i \wh Z_{\nu_i}$ and} induces the following map
\begin{eqnarray}\label{LocalModelRoofRightArrow}
f:\wt{Hecke}_{D,R_\ul\nu}^{\wh Z_{\ul\nu}} \longto \prod_i \wh{Z}_{\nu_i,R_{\nu_i}}.
\end{eqnarray}
\noindent
Let $\CA:=\CA_y$ be the stalk of the structure sheaf of $Hecke^{\wh Z_{\ul\nu}}$ at the geometric point $y$. 
As $\CA$ is strictly henselian, we may fix a trivialization $\epsilon_i^\CA$ of the restriction of the universal $L^+\BP_{\nu_i}$-torsor over $\scrH^1(\Spec \BF_\nu, L^+\BP_{\nu_i})$ to $\Spec \cA$; see Remark~\ref{RemFormalTorsor}\ref{formal torsor a}. This induces the section $s$.

\noindent
Notice that as $Hecke_{D,R_\ul\nu}^{\wh Z_{\ul\nu}}$ is a torsor over $Hecke_{R_\ul\nu}^{\wh Z_{\ul\nu}}$ for the smooth group scheme $\FG_D=\Res_{D/k}\FG$, we may ignore the $D$-level structure. The morphism $f:\wt{Hecke}_{R_\ul\nu }^{\wh Z_{\ul\nu}}\longto\prod_i \wh Z_{\nu_i,R_{\nu_i}}$ factors through $f_1:\wt{Hecke}_{R_\ul\nu }^{\wh Z_{\ul\nu}} \longto  \scrH^1(C,\FG)\times_{\BF_q}\prod_i \wh Z_{\nu_i,R_{\nu_i}}$ 
followed by the projection to the second factor. \forget{As any formal $\hat{\BP}_\nu$-torsor over $\BD(\Gamma_\ul s)$ extends to a $\FG$-bundle over $C_S$, and $\scrH^1(C,\FG)$ is smooth \cite[Theorem~2.4]{AH_Global}, it suffices to show that $f_1\circ s$ is formally smooth over its image.}Consider a closed immersion $\ol S \hookrightarrow S$, defined by a nilpotent sheaf of ideal $\CI$. Let $(\ol \CG, \ol \CG',(\ol s_i), \ol\tau)$ be an object of $Hecke^{\wh Z_{\ul\nu}}(\ol S)$ and assume that it maps to the tuple $\left(\ol\CG,(\L_{\nu_i}^+\ol\CG,\ol \phi_i:\L_{\nu_i}\ol\CG\tilde{\to}LP_{\nu_i,\ol S})_i\right)$ (resp. $(\L_{\nu_i}^+\ol\CG,\ol \phi_i)_i$) via $f_1\circ s$ (resp. $f \circ s$). Let $(\CL_{+,i},\phi_i:\CL_{i}\tilde{\to}LP_{\nu_i,  S})_i$ be a lift of $(\L_{\nu_i}^+\ol\CG,\ol \phi_i)_i$ over $S$. As any formal $\hat{\BP}_\nu$-torsor over $\BD(\Gamma_\ul s)$ extends to a $\FG$-bundle over $C_S$, and $\scrH^1(C,\FG)$ is smooth \cite[Theorem~2.4]{AH_Global}, one may take a lift $\left(\CG,(\CL_{+,i}=\L_{\nu_i}^+\CG,\phi_i:\CL_{i}\tilde{\to}LP_{\nu_i,  S})_i\right)$ of $\left(\ol\CG,(\L_{\nu_i}^+\ol\CG,\ol\phi_i)_i \right)$ over $S$, which maps to $(\CL_{+,i},\phi_i:\CL_{i}\tilde{\to}LP_{\nu_i,  S})_i$. Again by smoothness of the algebraic stack $\scrH^1(C,\FG)$, we may choose a $\FG$-bundle $\CG'$ which lifts $\ol \CG'$ over $S$. We let $\epsilon_{i,S}'$ denote the trivialization of $\L_{\nu_i}^+\CG'$ induced by $\epsilon_i^\CA$.

\noindent
Consider the isomorphism $\delta_{\nu_i}:=\phi_i^{-1}\circ \L(\epsilon_{i,S}'):\L_{\nu_i}\L_{\nu_i}^+\CG'\tilde{\to}\L_{\nu_i}\L_{\nu_i}^+\CG$ and let $\CG''$ denote the image of $S$-point $(\L_{\nu_i}^+\CG',\delta_{\nu_i})_i$ under the uniformization map
$$
\Psi_\CG:\prod_{\nu_i} \CM_{\nu_i}(\L_{\nu_i}\CG) \to \scrH^1(C,\FG)|_S,
$$
see Lemma \ref{LemUnifMap}. Note that by construction there is an isomorphism $\tau:\dot{(\cG'')}_\ul s \to \dot{(\cG)}_\ul s$. The $S$-point $(\CG,\CG'', (s_i), \tau)$ of $Hecke_n(C,\FG)^\ul \nu$ provides the desired lift of the $\ol S$-point $(\ol \CG, \ol \CG',(\ol s_i), \ol\tau)$ in the following sense. Namely, there is an isomorphism between $\ol\CG'=(\dot{(\ol\CG')}_\ul s, (\L_{\nu_i}^+\ol\CG',\ol\alpha_i':\L_{\nu_i}\L_{\nu_i}^+\ol\CG'\to \L_{\nu_i}\dot{(\ol\CG')}_\ul s)_i)$ and the pull back $\ol\CG''=\left(\dot{(\ol\CG)}_\ul s, (\L_{\nu_i}^+\ol\CG', \alpha_i'': \L_{\nu_i} \L_{\nu_i}^+\ol\CG'\to \L_{\nu_i}\dot{(\ol\CG)}_\ul s)_i\right)$ of $\CG''$ over $\ol S$, which is given by $\ol\tau$ on the first factor and by identity and the following commutative diagram
$$
\CD
\L_{\nu_i} \L_{\nu_i}^+\ol\CG'@>{\ol\alpha_i'}>>\L_{\nu_i}\dot{(\ol\CG')_\ul s}\\
@| @VV{\L_{\nu_i}(\ol\tau})V\\
\L_{\nu_i} \L_{\nu_i}^+\ol\CG'@>\ol\alpha_i''>>\L_{\nu_i}\dot{(\ol\CG)}_\ul s\;.
\endCD
$$

\noindent
on the second factor. To justify the commutativity of the above diagram notice that 
$$
\ol \alpha_i''=\ol\alpha_i\circ \ol\phi_i^{-1} \circ \L(\epsilon_{i,\ol S}')=\ol\alpha_i\circ \left(\L(\epsilon_{i,\ol S}') \circ \ol\alpha_i'^{-1}\circ \L_{\nu_i}(\ol\tau)^{-1}\circ \ol\alpha_i\right)^{-1} \circ \L(\epsilon_{i,\ol S}')= \L_{\nu_i}(\ol\tau)\circ\ol\alpha_i' 
$$
Regarding the definition of the uniformization map, it can be easily seen that $(\CG,\CG'', \ul s, \tau)$ maps to $(\CL_{+,i},\phi_i:\CL_{i}\tilde{\to}LP_{\nu_i,  S})_i$ under $f\circ s$.
\forget{
It remains to see that $f\circ s$ maps $(\CG,\CG'', \ul s, \tau)$ to $(\CL_{+,i},\phi_i:\CL_{i}\tilde{\to}LP_{\nu_i,  S})_i$
$$
f\circ s ((\CG,\CG'', \ul s, \tau))=f((\CG,\CG'', \ul s, \tau)\times \epsilon_i:\L_{\nu_i}^+\CG'\to L^+\BP_{\nu_i})=(\L_{\nu_i}^+\cG, \L_{\nu_i}(\epsilon_i)\circ \alpha_{\CG''}\circ \L_{\nu_i}(\tau)^{-1}\circ \alpha_\CG))
= (\L_{\nu_i}^+\cG, \phi_i)
$$
}

\end{proof}

Using Tannakian formalism, we can equip the moduli stack $\nabla\scrH^1(C,\FG)^\ul\nu$ of global $\FG$-shtukas with $H$-level structure, for a compact open subgroup $H\subseteq \FG(\BA_Q^{\ul \nu})$; details are explained in \cite[Chapter~6]{AH_Global}. Here we briefly recall the definition.
  
\begin{definition}[$H$-level structure]\label{DefLevelStr}
Assume that $S\in \Nilp_{\wh A_\ul\nu}$ is connected and fix a geometric point $\ol s$ of $S$. Let $\pi_1(S,\bar{s})$ denote the algebraic fundamental group of $S$. 
\begin{enumerate}
\item
The rational Tate functor constructed in \cite[Chapter~6]{AH_Global}
\begin{eqnarray*}
\check{\CV}_{-}\colon \nabla_n\scrH^1(C,\FG)^{\ul\nu}(S) \;&\longto &\; Funct^\otimes (\Rep_{\BA^{\ul\nu}}\FG\,,\,\FM od_{\BA_Q^{\ul\nu}[\pi_1(S,\bar{s})]})\,\\
\ul \CG &\mapsto & \check{\CV}_{\ul\CG}:\rho\to\invlim[D\subset C']\rho_\ast(\ul\CG|_{D_\ol s})^\tau\otimes_{\BA^\ul\nu}\BA_Q^\ul\nu
\end{eqnarray*}
\noindent
assigns to a global $\FG$-shtuka over $S$, a tensor functor from the category $\Rep_{\BA^{\ul \nu}} \FG$ of adelic representation of $\FG$ to the category $\FM od_{\BA_Q^{\ul \nu}[\pi_1(S,\bar{s})]}$ of $\BA_Q^{\ul \nu}[\pi_1(S,\bar{s})]$-modules. The limit is taken over all finite subschemes $D$ of $C':=C\setminus \{\nu_1,\dots,\nu_n\}$.
\item 
For a global $\FG$-shtuka $\ul\CG$ over $S$ let us consider the sets of isomorphisms of tensor functors $\Isom^{\otimes}(\check{\CV}_{\ul{\CG}},\omega^\circ)$, where $\omega^\circ\colon \Rep_{\BA^{\ul \nu}}\FG \to \FM od_{\BA_Q^{\ul \nu}}$ denote the neutral fiber functor. The set $\Isom^{\otimes}(\check{\CV}_{\ul{\CG}},\omega^\circ)$ admits an action of $\FG(\BA_Q^{\ul \nu})\times\pi_1(S,\bar{s})$ where $\FG(\BA_Q^{\ul \nu})$ acts through $\omega^\circ$ by Tannakian formalism and $\pi_1(S,\bar{s})$ acts through $\check{\CV}_{\ul\CG}$. For a compact open subgroup $H\subseteq \FG(\BA_Q^{\ul \nu})$ we define a \emph{rational $H$-level structure} $\bar\gamma$ on a global $\FG$-shtuka $\ul \CG$ over $S\in\Nilp_{\wh A_\ul\nu}$ as a $\pi_1(S,\bar{s})$-invariant $H$-orbit $\bar\gamma=H\gamma$ in $\Isom^{\otimes}(\check{\CV}_{\ul{\CG}},\omega^\circ)$.
\item 
We denote by $\nabla_n^{H,\wh Z_{\ul\nu}}\scrH^1(C,\FG)^{\ul \nu}$ the category fibered in groupoids, whose category of $S$-valued points $\nabla_n^{H,\wh Z_{\ul\nu}}\scrH^1(C,\FG)^{\ul \nu}(S)$ has tuples $(\ul \CG,\bar\gamma)$, consisting of a global $\FG$-shtuka $\ul\CG$ in $\nabla_n^{\wh Z_{\ul\nu}} \scrH^1(C,\FG)^{\ul\nu}(S)$ together with a rational $H$-level structure $\bar\gamma$, as its objects. The morphisms are quasi-isogenies of global $\FG$-shtukas that are isomorphisms at the characteristic places $\nu_i$ and are compatible with the $H$-level structures.  

\end{enumerate}

\end{definition}
 
 \noindent
The above definition of level structure generalizes the initial Definition~\ref{Global Sht}, according to the following proposition. 

\begin{proposition}\label{PropH_DL-Str}
We have the following statements
\begin{enumerate} 
\item\label{PropH_DL-Str a)}
\forget{\comment{Let $\CZ$ be a global bounded and let $\wh Z_\ul\nu=(Z_{\nu_i})$ denote the associated tuple of local bounds.}}For a finite subscheme $D\subset C$, disjoint from $\ul\nu$, there is a canonical isomorphism $\nabla_n\scrH_D^1 (C,\FG)^{\ul\nu}\es \isoto \es\nabla_n^{H_D}\scrH^1(C,\FG)^{\ul\nu}$\forget{\comment{$\nabla_n^\CZ\scrH_D^1 (C,\FG)\otimes_{C^n}\Spf A_{\ul\nu}\es \isoto \es\nabla_n^{H_D, \wh Z_\ul\nu}\scrH^1(C,\FG)^{\ul\nu}$}} of formal stacks. Here $H_D$ denotes the compact open subgroup $\ker\bigl(\FG(\BA^{\ul\nu})\to\FG(\CO_D)\bigr)$ of $\FG(\BA^{\ul\nu}_Q)$. 
\item\label{PropH_DL-Str b)}
For any compact open subgroup $H \subseteq \FG(\BA_Q^\ul\nu)$ the stack $\nabla_n^H\scrH^1 (C,\FG)^{\ul\nu}$ is an ind-algebraic stack, ind-separated and locally of ind-finite type over $\Spf A_{\ul\nu}$. The forgetful morphism $\nabla_n^H\scrH^1 (C,\FG)^{\ul\nu} \to \nabla_n\scrH^1 (C,\FG)^{\ul\nu}$ is finite \'etale. 
\end{enumerate}
\end{proposition}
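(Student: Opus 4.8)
The plan is to deduce both statements from the Tannakian description of the Tate functor in Definition~\ref{DefLevelStr} together with the structural results already available: Theorem~\ref{ThmnHisArtin} and Theorem~\ref{nHisDM} for the algebraicity/finiteness properties, and \cite[Chapter~6]{AH_Global} for the comparison of level structures. For part~\ref{PropH_DL-Str a)}, I would first unwind what an $H_D$-level structure with $H_D=\ker(\FG(\BA^{\ul\nu})\to\FG(\CO_D))$ amounts to. Given a global $\FG$-shtuka $\ul\CG$ over a connected $S\in\Nilp_{\wh A_\ul\nu}$ with geometric point $\bar s$, the $H_D$-orbits in $\Isom^\otimes(\check\CV_{\ul\CG},\omega^\circ)$ that are $\pi_1(S,\bar s)$-invariant correspond, by passing to the quotient of the (pro-finite level) Tate module by $H_D$, to $\FG(\CO_D)$-equivariant trivializations of the $\FG$-torsor $\ul\CG|_{D_{\bar s}}$ that are moreover $\pi_1(S,\bar s)$-invariant; but a $\pi_1(S,\bar s)$-invariant trivialization over $D_{\bar s}$ is exactly the datum of a trivialization $\psi\colon\CG\times_{C_S}D_S\isoto\FG\times_C D_S$ over $D_S$, because $D$ is finite and disjoint from $\ul\nu$ so $\tau$ is an isomorphism along $D_S$ and descent along $\pi_1$ is effective here. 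This is precisely a $D$-level structure in the sense of Definition~\ref{DefD-LevelStr}, and the identification is functorial in $S$ and compatible with morphisms (quasi-isogenies that are isomorphisms at the $\nu_i$ and preserve level structure correspond to isomorphisms of $\FG$-shtukas preserving $\psi$, since away from $\ul\nu$ such a quasi-isogeny is forced to be an isomorphism once it respects the $D$-level datum). Thus one obtains the claimed canonical isomorphism $\nabla_n\scrH_D^1(C,\FG)^{\ul\nu}\isoto\nabla_n^{H_D}\scrH^1(C,\FG)^{\ul\nu}$ of formal stacks; the boundedness decoration $\wh Z_{\ul\nu}$ is imposed on both sides identically and plays no role in this step.

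For part~\ref{PropH_DL-Str b)}, the strategy is a limit argument reducing to the case $H=H_D$. Any compact open subgroup $H\subseteq\FG(\BA_Q^{\ul\nu})$ contains a subgroup of the form $H_D$ for a suitable finite subscheme $D\subset C$ disjoint from $\ul\nu$ (because the $H_D$ form a neighborhood basis of the identity in $\FG(\BA_Q^{\ul\nu})$, using that $\FG$ is affine of finite type so the integral adelic group $\FG(\BA^{\ul\nu})$ is profinite up to the obvious finite part), and $H/H_D$ is then a finite group acting on $\nabla_n^{H_D}\scrH^1(C,\FG)^{\ul\nu}$ with quotient $\nabla_n^H\scrH^1(C,\FG)^{\ul\nu}$. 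Concretely, $\nabla_n^H\scrH^1(C,\FG)^{\ul\nu}=[\nabla_n^{H_D}\scrH^1(C,\FG)^{\ul\nu}/(H/H_D)]$, realized inside the groupoid by taking $H$-orbits rather than $H_D$-orbits of the level datum. Using part~\ref{PropH_DL-Str a)} to rewrite $\nabla_n^{H_D}\scrH^1(C,\FG)^{\ul\nu}\cong\nabla_n\scrH_D^1(C,\FG)^{\ul\nu}$, Theorem~\ref{ThmnHisArtin} (applied to the completion along $\ul\nu$, cf.\ Definition~\ref{DefGlobaltoLocal}) gives that this is an ind-algebraic stack, ind-separated and locally of ind-finite type over $\Spf\wh A_{\ul\nu}$, and that the forgetful morphism to $\nabla_n\scrH^1(C,\FG)^{\ul\nu}\times_{C^n}(C\setminus D)^n$ is a torsor under the finite group $\FG(D)=\FG(\CO_D)$, hence finite \'etale surjective. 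Taking the quotient by the finite group $H/H_D$ preserves all of these properties (quotients of ind-algebraic stacks by finite groups are ind-algebraic; ind-separatedness and the ind-finite-type property descend; and the composite $\nabla_n^H\scrH^1(C,\FG)^{\ul\nu}\to\nabla_n^{H_D}\scrH^1(C,\FG)^{\ul\nu}\to\nabla_n\scrH^1(C,\FG)^{\ul\nu}$ is the composition of two finite \'etale morphisms once one checks $H/H_D$ acts freely, which it does because the $H$-orbit determines the $H_D$-orbit up to this finite group acting without fixed points on $\Isom^\otimes/H_D$). Finally one checks independence of the auxiliary choice of $D$: two choices $D,D'$ are dominated by $D''=D\cup D'$, and the transition morphisms are compatible, so the resulting stack and the forgetful morphism do not depend on $D$.

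\textbf{Main obstacle.} I expect the delicate point to be the precise dictionary in part~\ref{PropH_DL-Str a)} between $\pi_1(S,\bar s)$-invariant $H_D$-orbits in $\Isom^\otimes(\check\CV_{\ul\CG},\omega^\circ)$ and genuine $D$-level structures $\psi$ — that is, verifying that passing from the pro-\'etale (rational, adelic) Tate functor down to finite level $D$ really recovers the naive trivialization over $D_S$, including the matching of the $\tau$-compatibility condition $\psi\circ\tauGlob=\s(\psi)$ with $\pi_1$-invariance of the orbit, and the fact that no information is lost by using the rational (rather than integral) Tate module when $H=H_D$ is contained in the integral group. This is exactly the content worked out in \cite[Chapter~6]{AH_Global}, so the honest approach is to invoke that reference for the equivalence at finite level and then only carry out the finite-group-quotient bookkeeping for general $H$; the ind-algebraicity and finiteness claims themselves are then formal consequences of Theorem~\ref{ThmnHisArtin} and do not present real difficulty.
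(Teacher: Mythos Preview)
Your approach is correct and matches the paper's: the authors simply cite \cite[Theorem~6.4]{AH_Global} for part~\ref{PropH_DL-Str a)} and say part~\ref{PropH_DL-Str b)} follows from~\ref{PropH_DL-Str a)} together with Theorem~\ref{ThmnHisArtin}, which is exactly the skeleton you have filled in. One small slip: in your part~\ref{PropH_DL-Str b)} the natural forgetful maps go $\nabla_n^{H_D}\scrH^1\to\nabla_n^{H}\scrH^1\to\nabla_n\scrH^1$ (deeper level to shallower), not $\nabla_n^{H}\to\nabla_n^{H_D}$ as you wrote; the finite-\'etaleness of the middle map then follows by descent along the first, which is an $H/H_D$-torsor.
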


\begin{proof}
For part \ref{PropH_DL-Str a)} see \cite[Theorem~6.4]{AH_Global}. Part \ref{PropH_DL-Str b)} follows from \ref{PropH_DL-Str a)} and  Theorem \ref{ThmnHisArtin}.
\end{proof}

\begin{definition}\label{DefNablaOmegaHTilde}
We define the formal stack $\nabla_n^{\wh Z_{\ul\nu}} \wt{{\scrH^1}(C, \FG)}^{\ul\nu}$ by the following pull back diagram

$$
\CD
\nabla_n^{\wh Z_{\ul\nu}} \wt{{\scrH^1}(C, \FG)}^{\ul\nu}@>>>\wt{Hecke}_n^{\wh Z_{\ul\nu}}\\
@VVV @VVV\\
\nabla_n^{\wh Z_{\ul\nu}} {\scrH^1}(C, \FG)^{\ul\nu}@>>>Hecke_n^{\wh Z_{\ul\nu}}\;.
\endCD
$$

\noindent
More explicitly the $S$-points of the formal stack $\nabla_n^{\wh Z_{\ul\nu}} \wt{{\scrH^1}(C, \FG)}^{\ul\nu}$ parametrizes the tuples $(\ul\CG,(\epsilon_i)_i)$ consisting of 
\begin{enumerate}
\item\label{DefNablaOmegaHTildeI}
a $\FG$-shtuka $\ul\CG$ in $\nabla_n^{\wh Z_{\ul\nu}} \scrH^1(C, \FG)^{\ul\nu}(S)$ and
\item\label{DefNablaOmegaHTildeII}
trivializations $\epsilon_i:\hat{\sigma}_S\CL_{+,i}\isoto L^+\BP_{i,S}$, where $(\CL_{+,i},\hat{\tau}_i):=\wh{\Gamma}_{\nu_i}(\ul\CG)$.

\end{enumerate}
\noindent
Furthermore we use the notation $\nabla_n^{H, \wh Z_{\ul\nu}} \wt{{\scrH^1}(C, \FG)}^{\ul\nu}$ when the $\FG$-shtukas in \ref{DefNablaOmegaHTildeI} are additionally equipped with $H$-level structure. Since we fix the curve $C$, the group $\FG$ and the characteristic places $\ul\nu$, for the sake of simplicity we sometimes drop them from our notation and will write $\nabla_n^{H, \wh Z_{\ul\nu}} {{\scrH^1}}$ and $\nabla_n^{H, \wh Z_{\ul\nu}} {\wt{\scrH^1}}$ to denote the corresponding formal stacks.
\end{definition}

\begin{theorem}\label{ThmRapoportZinkLocalModel}
Keep the above notation. Consider the following roof 

\begin{equation}\label{nablaHRoof} 
\xygraph{
!{<0cm,0cm>;<1cm,0cm>:<0cm,1cm>::}
!{(0,0) }*+{\nabla_n^{H, \wh Z_{\ul\nu}}\wt{\scrH_{R_\ul\nu}^1}}="a"
!{(-1.5,-1.5) }*+{\nabla_n^{H, \wh Z_{\ul\nu}}\scrH_{R_\ul\nu}^1}="b"
!{(1.5,-1.5) }*+{\prod_i \wh Z_{\nu_i,R_{\nu_i}},}="c"
"a":^{\pi'}"b" "a":_{f'}"c"
}  
\end{equation}

\noindent
induced from \ref{HeckeRoof}. Let $y$ be a geometric point of $\nabla_n^{H, \wh Z_{\ul\nu}}\scrH_{R_\ul\nu}^1$. The $\prod_i L^+\BP_{\nu_i}$-torsor $\pi': \nabla_n^{H, \wh Z_{\ul\nu}}\wt{\scrH_{R_\ul\nu}^1}\to\nabla_n^{H, \wh Z_{\ul\nu}}\scrH_{R_\ul\nu}^1$ admits a section $s'$, locally over an \'etale neighborhood of $y$, such that the composition $f'\circ s'$ is formally \'etale. 

\end{theorem}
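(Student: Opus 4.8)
The plan is to reduce the statement to its Hecke-stack counterpart, Proposition~\ref{PropLocalModelHecke}, by cutting out the graph of the Frobenius, and then to use the Serre--Tate theorem~\ref{Serre-Tate} together with the rigidity of quasi-isogenies, Proposition~\ref{PropRigidityLocal}, in order to upgrade the formal smoothness obtained there to formal \'etaleness. First I would record that, by Definition~\ref{DefNablaOmegaHTilde}, the roof \eqref{nablaHRoof} is induced from the roof \eqref{HeckeRoof} by base change along the canonical morphism $\nabla_n^{\wh Z_{\ul\nu}}\scrH^1(C,\FG)^{\ul\nu}\to Hecke_n^{\wh Z_{\ul\nu}}$ (the ``graph of the Frobenius''), up to the finite \'etale contribution of the $H$-level structure (Proposition~\ref{PropH_DL-Str}\ref{PropH_DL-Str b)}), which I then ignore. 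In particular $\pi'$ is the base change of the $\prod_i L^+\BP_{\nu_i}$-torsor $\pi$, hence is again such a torsor; here one uses that for a global $\FG$-shtuka the underlying Hecke datum has second bundle $\sigma^\ast\CG$, so that via the global-local functor~(\ref{G-LFunc}) and Remark~\ref{RemFormalTorsor}\ref{formal torsor b} the $L^+\BP_{\nu_i}$-torsor $\L_{\nu_i}^+\CG'$ trivialized by $\wt{Hecke}$ becomes the torsor $\hat\sigma_S^\ast\CL_{+,i}$ trivialized by $\nabla_n^{\wh Z_{\ul\nu}}\wt{\scrH^1}$. Exactly as in the proof of Proposition~\ref{PropLocalModelHecke}, the strictly henselian local ring $\CA:=\CA_y$ of $\nabla_n^{H,\wh Z_{\ul\nu}}\scrH^1_{R_{\ul\nu}}$ at $y$ carries a trivialization $\epsilon_i^{\CA}$ of the pull-back to $\Spec\CA$ of the universal $L^+\BP_{\nu_i}$-torsor over $\scrH^1(\Spec\BF_{\nu_i},L^+\BP_{\nu_i})$ by Remark~\ref{RemFormalTorsor}\ref{formal torsor a}, and this induces the section $s'$ over an \'etale neighborhood $U$ of $y$.

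To prove that $f'\circ s'\colon U\to\prod_i\wh Z_{\nu_i,R_{\nu_i}}$ is formally \'etale I would verify the infinitesimal lifting criterion; it suffices to treat a closed immersion $j\colon\ol S\hookrightarrow S$ in $\Nilp_{R_{\ul\nu}}$ with square-zero ideal $\CI$. So let $\ol{\ul\CG}\in\nabla_n^{\wh Z_{\ul\nu}}\scrH^1(C,\FG)^{\ul\nu}(\ol S)$ be an $\ol S$-point of $U$, with image $(\ol\CL_{+,i},\ol\phi_i)_i:=(f'\circ s')(\ol{\ul\CG})$, and let $(\CL_{+,i},\phi_i)_i\in\prod_i\wh Z_{\nu_i}(S)$ be a lift of it; I must produce a unique deformation $\ul\CG$ of $\ol{\ul\CG}$ over $S$ with $(f'\circ s')(\ul\CG)=(\CL_{+,i},\phi_i)_i$. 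By the Serre--Tate theorem~\ref{Serre-Tate}, deforming $\ol{\ul\CG}$ over $S$ amounts to deforming, for every $i$, the associated local $\BP_{\nu_i}$-shtuka $\ol{\ul\CL}_i=(\ol\CL_{+,i},\ol{\hat\tau}_i)=\wh\Gamma_{\nu_i}(\ol{\ul\CG})$; and, through the section $s'$, the map $f'\circ s'$ sends such a deformation $(\CL_{+,i},\hat\tau_i)$ to the point $\bigl(\CL_{+,i},\,\L_{\nu_i}(\epsilon_i^{\CA})\circ\hat\tau_i^{-1}\bigr)$ of $\wh{\SpaceFl}_{\BP_{\nu_i}}$. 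Hence it suffices to show, for each $i$, that this assignment is a bijection between deformations over $S$ of $\ol{\ul\CL}_i$ and deformations over $S$ of the point $(\ol\CL_{+,i},\ol\phi_i)$ of $\wh Z_{\nu_i}$.

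The heart of the matter, which I would isolate as a separate observation, is that the $\BF_{\nu_i}$-Frobenius $\hat\sigma_S=\sigma_S^{\deg\nu_i}$ carries $\CI$ into $\CI^{q^{\deg\nu_i}}=0$, hence factors through $j$. It follows that for any deformation $\CL_{+,i}$ of $\ol\CL_{+,i}$ over $S$ the torsor $\hat\sigma_S^\ast\CL_{+,i}$ is canonically pulled back from $\ol\CL_{+,i}$ alone, independently of the chosen lift, and that the trivialization $\epsilon_i^{\CA}$ over $S$ is the canonical extension of its restriction over $\ol S$. Therefore, given the lift $(\CL_{+,i},\phi_i)$, the isomorphism $\hat\tau_i:=\phi_i^{-1}\circ\L_{\nu_i}(\epsilon_i^{\CA})\colon\hat\sigma_S^\ast\CL_i\to\CL_i$ is the unique lift of $\ol{\hat\tau}_i$ for which $(\CL_{+,i},\hat\tau_i)$ maps to $(\CL_{+,i},\phi_i)$, and conversely every deformation of $\ol{\ul\CL}_i$ arises in this way from $\phi_i:=\L_{\nu_i}(\epsilon_i^{\CA})\circ\hat\tau_i^{-1}$. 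Finally $(\CL_{+,i},\hat\tau_i)$ is bounded by $\wh Z_{\nu_i}$ precisely when $\phi_i$ factors through $\wh Z_{\nu_i}$, since $\epsilon_i^{\CA}$ alters the relative position only by the left $L^+\BP_{\nu_i}$-action and $\wh Z_{\nu_i}$ is stable under it (Definition~\ref{DefBDLocal}\ref{DefBDLocal_B}). This gives the claimed bijection, hence the existence and uniqueness of $\ul\CG$, so $f'\circ s'$ is formally \'etale. As an alternative one could avoid Serre--Tate and instead imitate the proof of Theorem~\ref{ThmLocalModelI}, applying Lemma~\ref{etaleness} with the smooth scheme a chart of $\scrH^1(C,\FG)$ and using that the $q$-power Frobenius occurring in the defining equations has vanishing differential.

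The hard part will be the bookkeeping of the third step: one must check carefully that, under the section $s'$ and the Serre--Tate equivalence of categories, the infinitesimal deformations of $\ul\CG$ and of the points of $\wh Z_{\nu_i}$ are matched by a \emph{genuine} bijection and not merely a surjection, and that the trivializations $\epsilon_i^{\CA}$ are compatible on all the thickenings involved. This is exactly where the rigidity of the Frobenius pull-back $\hat\sigma_S^\ast$ on nilpotent thickenings---equivalently, the rigidity of quasi-isogenies for local $\BP$-shtukas, Proposition~\ref{PropRigidityLocal}---is used essentially; the reduction to square-zero ideals and the remaining formal-scheme and boundedness subtleties are then routine.
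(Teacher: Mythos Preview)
Your proposal is correct and follows essentially the same route as the paper: reduce away the $H$-level structure via Proposition~\ref{PropH_DL-Str}, restrict the section $s$ from Proposition~\ref{PropLocalModelHecke} to the strictly henselian local ring at $y$ to obtain $s'$, and then verify the infinitesimal lifting criterion by building the local $\BP_{\nu_i}$-shtuka deformation $(\CL_{+,i},\hat\tau_i:=\phi_i^{-1}\circ\L_{\nu_i}(\epsilon_{i,S}))$ from a lift $(\CL_{+,i},\phi_i)$ and invoking the Serre--Tate theorem~\ref{Serre-Tate} for existence and uniqueness of $\ul\CG$. Your explicit remark that $\hat\sigma_S$ kills the square-zero ideal, hence $\hat\sigma_S^\ast\CL_{+,i}$ depends only on $\ol\CL_{+,i}$, is exactly the mechanism underlying the paper's one-line appeal to Serre--Tate, and your discussion of boundedness via $L^+\BP_{\nu_i}$-stability spells out what the paper calls ``tautological.''
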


\begin{proof}

According to Proposition \ref{PropH_DL-Str} we may forget the $H$-level structure. Let $\CA':=\CA_y'$ be the henselian local ring at the geometric point $y\in\nabla_n^{\wh Z_{\ul\nu}}\scrH_{R_\ul\nu}^1$. The restriction of the section $s$ obtained in the course of the proof of Proposition \ref{PropLocalModelHecke} to $\CU':=\Spec \CA'$ provides a section $s'$.


Now we check that the morphism $f' \circ s'$ is formally \'etale. Let $\ol S$ be a closed subscheme of $S$, defined by a nilpotent sheaf of ideal $\CI$. Take an $\ol S$-valued point $\ol{\ul\CG}=(\ol \CG, (s_i), \ol\tau)$ of $\CA'$ and let $(\ol{\ul\CL}_i)_i=\left((\ol\CL_{+,i},\hat{\ol\tau}_i)\right)_i$ denote the associated tuple of local $\BP_{\nu_i}$-shtukas under the global-local functor.  Let $(\ol \CL_{+,i}, \ol\phi_i:\ol \CL_i\isoto  L\BP_{\nu_i,\ol S})$ denote the image of $\ol{\ul\CG}=(\ol\CG, (s_i), \ol \tau)$ in $\prod_i \wh Z_{\nu_i}(\ol S)$ under the morphism $f' \circ s'$, and let $(\CL_{+,i}, \phi_i:\CL_i\isoto  L\BP_{\nu_i,S})$ be an infinitesimal deformation of $(\ol \CL_{+,i}, \phi_i:\ol \CL_i\isoto  L\BP_{\nu_i,\ol S}) \in \prod_i \wh Z_{\nu_i}(\ol S)$ over $S$. We consider the infinitesimal deformation $(\ul\CL_i)_i:=\left((\CL_{+,i},\hat{\tau}_i:=\phi_i^{-1}\circ \L_{\nu_i}(\epsilon_{i,S}))_i\right)_i$ of the $n$-tuple $(\ol{\ul\CL}_i)_i$ of local $\BP_{\nu_i}$-shtukas over $S$. Here by $\epsilon_{i,S}$ denote the trivialization of $\sigma_S^\ast\CL_+$ induced by $\epsilon_i^\CA$; see proof of the Proposition \ref{PropLocalModelHecke}. By the analog of Serre-Tate, theorem \ref{Serre-Tate}, we have $$Defo_S(\bar{\ul{\cG}})\cong \prod_i Defo_S(\ul{\bar\CL}_i).$$
Consequently, the $n$-tuple $(\ul\CL_i)_i$ provides a unique infinitesimal deformation $\ul\CG=(\CG, \ul s, \tau)$ of $\ol{\ul\CG}=(\ol \CG, \ul s, \ol\tau)$ over $S$, which by construction maps to $(\CL_{+,i}, \phi_i)$ under $f'\circ s'$. The fact that $\ul\CG$ is bounded by $\wh Z_{\ul\nu}$ is tautological. \\

\end{proof}

\begin{remark}
Analyzing the proof of Theorem \ref{ThmLocalModelI} and also Proposition~\ref{PropLocalModelHecke}, we immediately see that the assumption of smoothness of $\FG$ over $C$ can not be weakened.
\end{remark}

\subsection{Some Applications}
In this section we assume that $\FG$ is a parahoric Bruhat-Tits group scheme (i.e. a smooth affine group scheme with connected fibers and reductive generic fiber) over $C$.\\

\forget
{

\begin{remark}

Notice that after imposing boundedness condition, one can refine the local model roof\forget{Proposition \ref{ThmLocalModelI} \ref{ThmRapoportZinkLocalModel}}, in such a way that the morphisms become locally of finite type. Let $\wh Z_\ul\nu$ be a bound and $\wh{Z}_{\nu_i,R_{\nu_i}}$ be a representative of the bound $\wh Z_{\nu_i}$ over $R_{\nu_i}$. 
Let $K_{\ul\nu}=\prod_i K_{\nu_i}\subset \prod_i L^+\BP_{\nu_i}$ denote the normal subgroup which acts trivially on $\wh Z_{\ul\nu}$. The diagram \ref{nablaHRoof} (resp. \ref{HeckeRoof}) induces the following Cartesian diagram
\[ 
\xygraph{
!{<0cm,0cm>;<1cm,0cm>:<0cm,1cm>::}
!{(0,0) }*+{\left( \nabla_n^{H, \wh Z_{\ul\nu}}\wt{\scrH_{R_\ul\nu}^1}\right)^{K_{\ul\nu}}}="a"
!{(-2,-2) }*+{\nabla_n^{H, \wh Z_{\ul\nu}}\scrH_{R_\ul\nu}^1}="b"
!{(2,-2) }*+{\prod_i [K_{\nu_i}\backslash \wh Z_{\nu_i,R_{\nu_i}}]}="c"
!{(0,-4) }*+{\prod_i [L^+\BP_{\nu_i}\backslash\wh Z_{\nu_i,R_{\nu_i}}]}="d"
"a":^{\pi^{K_{\ul\nu}}}"b" "a":_{f^{K_{\ul\nu}}}"c" "c":"d" "b":"d"
}  
\]

(resp.
\[ 
\xygraph{
!{<0cm,0cm>;<1cm,0cm>:<0cm,1cm>::}
!{(0,0) }*+{\left( \wt{Hecke}_{D,R_\ul\nu}^{\wh Z_{\ul\nu}}\right)^{K_{\ul\nu}}}="a"
!{(-2,-2) }*+{Hecke_{D,R_\ul\nu}^{\wh Z_{\ul\nu}}}="b"
!{(2,-2) }*+{\prod_i [K_{\nu_i}\backslash \wh Z_{\nu_i,R_{\nu_i}}]~~~~)}="c"
!{(0,-4) }*+{\prod_i [L^+\BP_{\nu_i}\backslash\wh Z_{\nu_i,R_{\nu_i}}]}="d"
"a":^{\pi^{K_{\ul\nu}}}"b" "a":_{f^{K_{\ul\nu}}}"c" "c":"d" "b":"d"
}  
\]

\noindent
of algebraic stacks. Here $[L^+\BP_{\nu_i}\backslash\wh Z_{\nu_i,R_{\nu_i}}]$ and $[K_{\nu_i}\backslash \wh Z_{\nu_i,R_{\nu_i}}]$ denote the quotient stacks and $\left(\nabla_n^{H,\wh Z_{\ul\nu}}\wt{\scrH_{R_\ul\nu}^1}\right)^{K_{\ul\nu}}$ (resp. $\left( \wt{Hecke}_{D,R_\ul\nu}^{\wh Z_{\ul\nu}}\right)^{K_{\ul\nu}}$), denote the formal algebraic stack whose $S$-points parametrizes $K_\ul\nu$-orbits of the $S$-points $(\ul\CG,(\epsilon_i)_i)$ (resp. $\left((\CG,\CG',\ul s,\tau),(\epsilon_i)_i\right)$) of $\nabla_n^{H, \wh Z_{\ul\nu}}\wt{\scrH_{R_\ul\nu}^1}(S)$ (resp. $\wt{Hecke}_{D, R_\ul\nu}^{\wh Z_{\ul\nu}}(S)$). This stack is a $\prod_i K_{\nu_i}\backslash{L^+\BP_{\nu_i}}$-fiber bundle over $\nabla_n^{H, \wh Z_{\ul\nu}}\scrH^1$ (resp. $Hecke_{D, R_\ul\nu}^{\wh Z_{\ul\nu}}$ ) via the morphism  $\pi^{K_{\ul\nu}}$, and therefore locally of finite type. More precisely, since $\wh Z_{\nu_i,R_{\nu_i}}$ is projective (see Remark~\ref{RemFlagisquasiproj}), the scheme of morphisms $Mor(\wh Z_{\nu_i,R_{\nu_i}},\wh Z_{\nu_i,R_{\nu_i}})$ is of finite type by \cite[IV.4.c]{FGA} and consequently $K_{\nu_i} \backslash L^+\BP_{\nu_i} $ is of finite type. 
\end{remark}
}

Let $\wh Z_\ul\nu$ be a bound and $\wh{Z}_{\nu_i,R_{\nu_i}}$ be a representative of the bound $\wh Z_{\nu_i}$ over $R_{\nu_i}$. 
Let $K_{\ul\nu}=\prod_i K_{\nu_i}\subset \prod_i L^+\BP_{\nu_i}$ denote the normal subgroup which acts trivially on $\wh Z_{\ul\nu}$. The diagram \ref{nablaHRoof} (resp. \ref{HeckeRoof}) induces the following Cartesian diagram
\[ 
\xygraph{
!{<0cm,0cm>;<1cm,0cm>:<0cm,1cm>::}
!{(0,0) }*+{\left( \nabla_n^{H, \wh Z_{\ul\nu}}\wt{\scrH_{R_\ul\nu}^1}\right)^{K_{\ul\nu}}}="a"
!{(-2,-2) }*+{\nabla_n^{H, \wh Z_{\ul\nu}}\scrH_{R_\ul\nu}^1}="b"
!{(2,-2) }*+{\prod_i [K_{\nu_i}\backslash \wh Z_{\nu_i,R_{\nu_i}}]}="c"
!{(0,-4) }*+{\prod_i [L^+\BP_{\nu_i}\backslash\wh Z_{\nu_i,R_{\nu_i}}]}="d"
"a":^{\pi^{K_{\ul\nu}}}"b" "a":_{f^{K_{\ul\nu}}}"c" "c":"d" "b":"d"
}  
\]

(resp.
\[ 
\xygraph{
!{<0cm,0cm>;<1cm,0cm>:<0cm,1cm>::}
!{(0,0) }*+{\left( \wt{Hecke}_{D,R_\ul\nu}^{\wh Z_{\ul\nu}}\right)^{K_{\ul\nu}}}="a"
!{(-2,-2) }*+{Hecke_{D,R_\ul\nu}^{\wh Z_{\ul\nu}}}="b"
!{(2,-2) }*+{\prod_i [K_{\nu_i}\backslash \wh Z_{\nu_i,R_{\nu_i}}]~~~~)}="c"
!{(0,-4) }*+{\prod_i [L^+\BP_{\nu_i}\backslash\wh Z_{\nu_i,R_{\nu_i}}]}="d"
"a":^{\pi^{K_{\ul\nu}}}"b" "a":_{f^{K_{\ul\nu}}}"c" "c":"d" "b":"d"
}  
\]

\noindent
of algebraic stacks. Here $[L^+\BP_{\nu_i}\backslash\wh Z_{\nu_i,R_{\nu_i}}]$ and $[K_{\nu_i}\backslash \wh Z_{\nu_i,R_{\nu_i}}]$ denote the quotient stacks and $\left(\nabla_n^{H,\wh Z_{\ul\nu}}\wt{\scrH_{R_\ul\nu}^1}\right)^{K_{\ul\nu}}$ (resp. $\left( \wt{Hecke}_{D,R_\ul\nu}^{\wh Z_{\ul\nu}}\right)^{K_{\ul\nu}}$), denote the formal algebraic stack whose $S$-points parametrizes $K_\ul\nu$-orbits of the $S$-points $(\ul\CG,(\epsilon_i)_i)$ (resp. $\left((\CG,\CG',\ul s,\tau),(\epsilon_i)_i\right)$) of $\nabla_n^{H, \wh Z_{\ul\nu}}\wt{\scrH_{R_\ul\nu}^1}(S)$ (resp. $\wt{Hecke}_{D, R_\ul\nu}^{\wh Z_{\ul\nu}}(S)$). This stack is a $\prod_i K_{\nu_i}\backslash{L^+\BP_{\nu_i}}$-fiber bundle over $\nabla_n^{H, \wh Z_{\ul\nu}}\scrH^1$ (resp. $Hecke_{D, R_\ul\nu}^{\wh Z_{\ul\nu}}$ ) via the morphism  $\pi^{K_{\ul\nu}}$, and therefore locally of finite type. More precisely, since $\wh Z_{\nu_i,R_{\nu_i}}$ is projective (see Remark~\ref{RemFlagisquasiproj}), the scheme of morphisms $Mor(\wh Z_{\nu_i,R_{\nu_i}},\wh Z_{\nu_i,R_{\nu_i}})$ is of finite type by \cite[IV.4.c]{FGA} and consequently $K_{\nu_i} \backslash L^+\BP_{\nu_i} $ is of finite type.  Let $\wt W_i$ denote the affine Weyl group corresponding to $P_{\nu_i}$. Recall that by definition of the boundedness condition, the special fiber of $\wh Z_{\nu_i}$ is a finite union of closed affine Schubert varieties. Therefore the special fiber of the formal stack $[L^+\BP_{\nu_i}\backslash\wh Z_{\nu_i,R_{\nu_i}}]$ is a discrete set indexed by a finite subset of $\wt{W}_i$, corresponding to the $L^+\BP_{\nu_i}$-orbits, see \cite[Proposition 0.1]{Richarz}. Accordingly, we obtain a natural stratification $\{(\nabla_n^{H, Z_\ul\nu}\scrH_s^1)^\ul\lambda\}_\ul\lambda$ (resp. $\{\left(Hecke_{D,s}^{Z_\ul\nu}\right)^\ul\lambda\}_\ul\lambda$) of the special fiber $\nabla_n^{H, Z_{\ul\nu}}\scrH_s^1$ (resp. $Hecke_{D,s}^{Z_{\ul\nu}}$) of $\nabla_n^{H, \wh Z_{\ul\nu}}\scrH^1$ (resp. $Hecke_D^{Z_{\ul\nu}}$), indexed by $\ul\lambda \in \prod_i\wt W_i$, such that the incidence relation between strata is given by the obvious partial order on the product $\prod_i\wt{W}_i$, induced by the natural Bruhat order on each factor $\wt W_i$. The fiber over $\ul \lambda \in \prod_i\wt W_i$ appearing in this stratification is called \emph{Kottwitz-Rapoport stratum} corresponding to $\ul\lambda$. \\

\noindent

Assume that the special fiber $Z_\ul\nu$ of $\wh Z_\ul\nu$ equals the fiber product $\CS(\ul\omega)=\CS(\omega_1)\times_{\BF_q}\dots\times_{\BF_q} \CS(\omega_n)$ for $\ul\omega:=(\omega_i)\in \prod_i \wt W_i$. In particular, we have $\nabla_n^{H, Z_{\ul\nu}}\scrH_s^1=\left(\nabla_n^{H, \ul\omega}\scrH_s^1\right)^{\preceq \ul\omega}=\bigcup_{\ul\lambda\preceq\ul\omega}\left(\nabla_n^{H, \ul\omega}\scrH_s^1\right)^{\ul\lambda}$ (resp. $Hecke_{D,s}^{Z_{\ul\nu}}=\left(Hecke_{D,s}^{\ul\omega}\right)^{\preceq \ul\omega}:=\bigcup_{\ul\lambda\preceq \ul\omega} \left(Hecke_{D,s}^{\ul\omega}\right)^{\ul\lambda}$). 

\noindent
For each $\ul\omega$, let $\nabla_n^{H, \ul\omega}\scrH_s^{1^\circ}$ (resp. $Hecke_{D,s}^{\ul\omega^\circ}$) be the complement in $\nabla_n^{H, \ul\omega}\scrH_s^1$ (resp. $Hecke_{D,s}^{\ul\omega}$) of the union of all $\left(\nabla_n^{H, \ul\omega}\scrH_s^1\right)^{\ul\lambda}$ (resp. $\left(Hecke_{D,s}^{\ul\omega}\right)^{\ul\lambda}$) with  $\ul\lambda\prec\ul\omega$. We have the following

\begin{proposition}\label{PropDimOfNablaH}
Keep the above assumption about the bound $\wh Z_\ul\nu$. The stack $\nabla_n^{H, \ul\omega}\scrH_s^{1^\circ}$ (resp. $Hecke_{D,s}^{\ul\omega^\circ}$) is an smooth open and dense substack of $\nabla_n^{H, \ul\omega}\scrH_s^{1}$ (resp. $Hecke_{D,s}^{\ul\omega \circ}$) of dimension $\sum_{i=1}^n\ell_i(\omega_i)$ (resp. $d+\sum_{i=1}^n\ell_i(\omega_i)$). Here $\ell_i$ denotes the Bruhat-Chevally length function on $\wt W_i$ and $d=\dim \scrH_D^1(C,\FG)$.

\end{proposition}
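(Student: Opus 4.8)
The plan is to deduce both assertions by transporting, along the local model roofs of Proposition~\ref{PropLocalModelHecke} and Theorem~\ref{ThmRapoportZinkLocalModel}, the local geometry of the big cell of an affine Schubert variety. First I would clear away the level structures: by Proposition~\ref{PropH_DL-Str} the morphism forgetting the $H$-level structure is finite \'etale, and by Theorem~\ref{ThmnHisArtin} the passage between the $D$-level and no-level versions is along a torsor under the finite group $\FG(D)$; since finite \'etale covers preserve smoothness, dimension, openness and density, we may and do ignore $H$ and $D$. Next recall the relevant facts about affine Schubert varieties (Definition~\ref{DefIwahori-Weyl}, \cite{PR2}, \cite{Richarz}): the $L^+\BP_{\nu_i}$-orbit of $g_{\omega_i}$, i.e. the Schubert cell attached to $\omega_i$, is an open dense smooth subscheme of $\CS(\omega_i)$ of dimension $\ell_i(\omega_i)$ over the residue field $\kappa$, with complement $\bigcup_{\lambda_i\prec\omega_i}\CS(\lambda_i)$. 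Writing $\CS(\omega_i)^\circ$ for this cell, the product $\prod_i\CS(\omega_i)^\circ$ is therefore the open dense smooth stratum, of dimension $\sum_i\ell_i(\omega_i)$, of the special fibre $\prod_i\CS(\omega_i)$ of $\prod_i\wh Z_{\nu_i,R_{\nu_i}}$; and by the very construction of the Kottwitz--Rapoport stratification (as the pull-back, along the roof, of the $L^+\BP_{\nu_i}$-orbit stratification of $[L^+\BP_{\nu_i}\backslash\wh Z_{\nu_i,R_{\nu_i}}]$) the open strata $\nabla_n^{H,\ul\omega}\scrH_s^{1^\circ}$ and $Hecke_{D,s}^{\ul\omega^\circ}$ are precisely the preimages of $\prod_i\CS(\omega_i)^\circ$.

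For $\nabla_n^{H,\ul\omega}\scrH_s^{1^\circ}$ I would argue as follows. Fix a geometric point $y$; Theorem~\ref{ThmRapoportZinkLocalModel} furnishes an \'etale neighbourhood $U$ of $y$, a section $s'$ over $U$ of the $\prod_i L^+\BP_{\nu_i}$-torsor $\pi'$, and a formally \'etale morphism $f'\circ s'\colon U\to\prod_i\wh Z_{\nu_i,R_{\nu_i}}$. On special fibres this is a formally \'etale morphism between stacks locally of finite type over $\kappa$ (Theorem~\ref{nHisDM}; recall $\prod_i\CS(\omega_i)$ is projective), hence \'etale. Restricting over $\prod_i\CS(\omega_i)^\circ$, the piece $\nabla_n^{H,\ul\omega}\scrH_s^{1^\circ}\cap U_s$ becomes \'etale over the smooth $\kappa$-variety $\prod_i\CS(\omega_i)^\circ$, and is therefore smooth of dimension $\sum_i\ell_i(\omega_i)$. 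Since \'etale morphisms are open and the preimage of a dense open subset under an open continuous map is again dense, this piece is open and dense in $U_s$; because such $U$ cover and, as recalled before the statement, $\nabla_n^{H,\ul\omega}\scrH_s^1=\bigcup_{\ul\lambda\preceq\ul\omega}(\nabla_n^{H,\ul\omega}\scrH_s^1)^{\ul\lambda}$, openness, density, smoothness and the dimension formula hold on all of $\nabla_n^{H,\ul\omega}\scrH_s^1$.

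For $Hecke_{D,s}^{\ul\omega^\circ}$ the same scheme applies starting from Proposition~\ref{PropLocalModelHecke}, the only difference being that there $f\circ s$ is merely formally smooth; the task is to identify its relative dimension with $d=\dim\scrH_D^1(C,\FG)$. I would use the factorisation from the proof of Proposition~\ref{PropLocalModelHecke}, in which $f\circ s$ equals $f_1\circ s\colon U\to\scrH^1(C,\FG)\times_{\BF_q}\prod_i\wh Z_{\nu_i,R_{\nu_i}}$ followed by the projection to the second factor: unwinding the construction there, once the image $\bigl(\CG,(\L_{\nu_i}^+\CG,\phi_i)_i\bigr)$ is fixed the companion bundle $\CG'$ and the isomorphism $\tau$ are recovered, up to unique isomorphism, via the uniformisation map $\Psi_\CG$ of Lemma~\ref{LemUnifMap}, so $f_1\circ s$ is in fact formally \'etale (this is also the expected value, the $\scrH^1(C,\FG)$-factor being precisely where the $d$ extra dimensions sit, cf. Proposition~\ref{GR-Hecke} and Remark~\ref{RemGlobAffGrass}). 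Granting this, on special fibres over $\prod_i\CS(\omega_i)^\circ$ the piece $Hecke_{D,s}^{\ul\omega^\circ}\cap U_s$ is \'etale over $\scrH^1(C,\FG)\times_{\BF_q}\prod_i\CS(\omega_i)^\circ$, which is smooth of dimension $d+\sum_i\ell_i(\omega_i)$ since $\scrH^1(C,\FG)$ is a smooth stack of dimension $d$ (Remark~\ref{RemBun_G}); openness, density and smoothness then follow as in the previous paragraph. Alternatively, the $\nabla$-statement may be recovered from the $Hecke$-statement: $\nabla_n\scrH^1$ is the equaliser of $(\sigma_{\scrH^1}\circ pr_1,pr_2)$ on $Hecke$, $pr_2$ is smooth over the open stratum, and the graph of the Frobenius of the smooth stack $\scrH^1$ is cut out, on an \'etale presentation, by $d$ equations with linearly independent differentials, so the Jacobi-criterion argument from the proof of Lemma~\ref{etaleness} gives $\dim\nabla_n^{H,\ul\omega}\scrH_s^{1^\circ}=(d+\sum_i\ell_i(\omega_i))-d=\sum_i\ell_i(\omega_i)$ together with smoothness.

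The step I expect to be the main obstacle is the identification of the relative dimension of $f\circ s$ in the $Hecke$-case with $d$ --- equivalently, the verification that $f_1\circ s$ is formally \'etale, i.e. that the ``$\CG'$-direction'' of $Hecke_n$ contributes nothing beyond the $\scrH^1(C,\FG)$-factor. This requires a careful analysis of the uniformisation map $\Psi_\CG$ of Lemma~\ref{LemUnifMap} and of the compatibility of the Kottwitz--Rapoport stratification with the Schubert stratification through the roof; the remaining points are a routine transport of smoothness, relative dimension and density along \'etale and smooth morphisms.
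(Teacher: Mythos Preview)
Your approach is essentially the same as the paper's: the paper's proof is the single sentence ``This follows from \cite[Proposition~0.1]{Richarz}, Theorem~\ref{ThmRapoportZinkLocalModel} (resp.\ Proposition~\ref{PropLocalModelHecke}) and the above discussion,'' and your proposal is a careful unpacking of exactly these ingredients. Regarding the point you flag as the main obstacle (the relative dimension of $f\circ s$ in the Hecke case), note that you can bypass the $f_1$-analysis entirely: Proposition~\ref{GR-Hecke} already gives an \'etale-local isomorphism $Hecke_n\cong GR_n\times\scrH^1(C,\FG)$ over $C^n\times\scrH^1(C,\FG)$, and combined with the identification of the special fibre of $GR_n$ at $\ul\nu$ with $\prod_i\CF\ell_{\BP_{\nu_i}}$ and Richarz's dimension formula for the Schubert cell this yields $d+\sum_i\ell_i(\omega_i)$ directly.
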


\begin{proof}
This follows from \cite[Propposition 0.1]{Richarz}, Theorem \ref{ThmRapoportZinkLocalModel} (resp. proposition \ref{PropLocalModelHecke}) and the above discussion.
\end{proof}

\begin{theorem}\label{ThmFlatIntModel}

Keep the above notation. Assume that $\wh Z_\ul\nu$ admits a reduced representative. Furthermore assume that $G$ splits over a tamely ramified extension of $Q$ and that $p$ does not divide the order of the fundamental group of the derived group $G_{der}$. Then the moduli stack $\nabla_n^{H, \wh Z_\ul\nu}\scrH^1(C,\FG)^\ul\nu$ is flat over $\Spf R_{\wh Z_{\ul\nu}}$. In addition $\nabla_n^{H, \wh Z_\ul\nu}\scrH^1(C,\FG)^\ul\nu$ is normal.

\end{theorem}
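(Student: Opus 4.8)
The plan is to reduce the flatness and normality of $\nabla_n^{H,\wh Z_\ul\nu}\scrH^1(C,\FG)^\ul\nu$ over $\Spf R_{\wh Z_\ul\nu}$ to the corresponding statements for the local model $\prod_i \wh Z_{\nu_i,R_{\nu_i}}$, using the local model roof of Theorem~\ref{ThmRapoportZinkLocalModel}, and then to invoke the theorem of Pappas and Zhu \cite{PZ} on the flatness and normality of local models for tamely ramified parahoric group schemes.

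First I would pass to the level structure: by Proposition~\ref{PropH_DL-Str} the forgetful morphism $\nabla_n^{H,\wh Z_\ul\nu}\scrH^1(C,\FG)^\ul\nu \to \nabla_n^{\wh Z_\ul\nu}\scrH^1(C,\FG)^\ul\nu$ is finite \'etale, so it suffices to treat $H=H_D$, i.e.\ to prove that $\nabla_n^{\wh Z_\ul\nu}\scrH^1(C,\FG)^\ul\nu$ is flat and normal over $\Spf R_{\wh Z_\ul\nu}$. Next I would invoke Theorem~\ref{ThmRapoportZinkLocalModel}: for every geometric point $y$ there is an \'etale neighborhood $\CU'$ of $y$, the $\prod_i L^+\BP_{\nu_i}$-torsor $\pi'$ admits a section $s'$ over $\CU'$, and $f'\circ s'\colon \CU'\to \prod_i \wh Z_{\nu_i,R_{\nu_i}}$ is formally \'etale. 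Since flatness and normality are \'etale-local on source and target, and since the total space $\nabla_n^{H,\wh Z_\ul\nu}\wt{\scrH^1}_{R_\ul\nu}$ is a torsor over $\nabla_n^{\wh Z_\ul\nu}\scrH^1$ for the smooth group scheme $\prod_i L^+\BP_{\nu_i}$ (hence flat with smooth, in particular normal and geometrically integral, fibers over the base), both properties for $\nabla_n^{\wh Z_\ul\nu}\scrH^1(C,\FG)^\ul\nu$ over $R_{\wh Z_\ul\nu}$ are equivalent to the same properties for $\prod_i \wh Z_{\nu_i,R_{\nu_i}}$ over $R_{\wh Z_\ul\nu}$. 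Here one uses the standard descent facts: a morphism is flat (resp.\ has normal fibers, resp.\ is normal) if and only if it becomes so after a smooth or formally \'etale base change that is surjective on the relevant points; and a formally \'etale, formally smooth, or smooth morphism preserves and reflects regularity and normality of the source over a given base.

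It then remains to check that each $\wh Z_{\nu_i,R_{\nu_i}}$ is flat over $R_{\nu_i}$ with normal fibers, equivalently that its reduced representative $Z_{\nu_i}$ — which exists by hypothesis — is flat and normal over the discrete valuation ring $R_{\nu_i}$. This is precisely where the tameness hypothesis enters: by Definition~\ref{DefBDLocal} the special fiber of $\wh Z_{\nu_i}$ is a (union of) affine Schubert varieties $\CS(\omega_i)$ in the twisted affine flag variety $\SpaceFl_{\BP_{\nu_i}}$ for the parahoric $\BP_{\nu_i}$, and the deformation $\wh Z_{\nu_i}$ over $R_{\nu_i}$ is, up to the base change and equivalence bookkeeping of Remark~\ref{RemAltReflexRing}, exactly the Pappas--Zhu local model attached to $(\BP_{\nu_i},\omega_i)$. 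Under the assumptions that $G$ splits over a tamely ramified extension of $Q$ and $p\nmid |\pi_1(G_{\mathrm{der}})|$, the Pappas--Zhu theorem \cite{PZ} (their construction of the global affine Grassmannian deformation together with the coherence/normality results, invoking Zhu's proof of the coherence conjecture) yields that these local models are flat over their reflex rings with normal, reduced, Cohen--Macaulay special fibers; being projective and generically smooth with normal fibers, the total space is normal. Feeding this back through the roof gives flatness and normality of $\nabla_n^{H,\wh Z_\ul\nu}\scrH^1(C,\FG)^\ul\nu$ over $\Spf R_{\wh Z_\ul\nu}$.

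The main obstacle I anticipate is not the \'etale-local descent, which is formal, but the precise identification of the local pieces $\wh Z_{\nu_i,R_{\nu_i}}$ with the Pappas--Zhu local models in the form needed to quote \cite{PZ}: one must match the ring of definition $R_{\nu_i}$ (a finite extension of $\BF_\nu\dbl\zeta\dbr$) with the reflex ring in \cite{PZ}, check that the parahoric $\BP_{\nu_i}=\FG\times_C\Spec\wh A_{\nu_i}$ and its attached twisted affine flag variety agree with the objects in loc.\ cit.\ after the relevant tame base change, and verify that "admits a reduced representative" together with the $L^+\BP_{\nu_i}$-stability forces $Z_{\nu_i}$ to be the (reduced, normal) local model rather than merely a closed subscheme of it — this last point being where the coherence theorem is genuinely used. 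One should also be slightly careful that normality of the fiber product $\prod_i \wh Z_{\nu_i,R_{\nu_i}}$ over $R_{\wh Z_\ul\nu}=\hat\bigotimes_i R_{\nu_i}$ follows from normality of the factors, which holds because the factors are flat with geometrically normal fibers over DVRs with perfect residue fields, so that the completed tensor product is again normal.
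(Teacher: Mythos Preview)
Your reduction via the local model roof of Theorem~\ref{ThmRapoportZinkLocalModel} is exactly what the paper does, but the two arguments diverge at the point where one must prove flatness and normality of $\prod_i \wh Z_{\nu_i,R_{\nu_i}}$ itself. You propose to identify $\wh Z_{\nu_i}$ with a Pappas--Zhu local model and then quote \cite{PZ}. The paper does \emph{not} attempt this identification; instead it argues directly. Under the tameness and $p\nmid|\pi_1(G_{\mathrm{der}})|$ hypotheses, \cite[Theorem~8.4]{PR2} (Pappas--Rapoport, not Pappas--Zhu) gives that the special fiber $\CS(\omega_i)$ has rational singularities, hence is Cohen--Macaulay. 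Lifting via \cite[Theorem~17.3]{Matsumura}, each $\wh Z_{\nu_i}$ is Cohen--Macaulay, and by \cite{B-K} so is the product $\wh Z_\ul\nu$. Since $R_{\wh Z_{\nu_i}}$ is a DVR and the generic and special fiber dimensions agree (both equal $\ell_i(\omega_i)$, using irreducibility of $\CS(\omega_i)$ from \cite{Richarz13}), miracle flatness \cite[IV, Prop.~6.1.5]{EGA} gives flatness. Normality then follows from flat with normal fibers via \cite[IV, 12.2.4 and 6.14.1]{EGA}, the normality of the fibers again coming from \cite[Theorem~8.4]{PR2} and \cite[Theorem~A]{Richarz13}.

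The obstacle you flag yourself is real and is precisely what the paper's argument sidesteps: an abstract bound $\wh Z_{\nu_i}$ is just an $L^+\BP_{\nu_i}$-stable closed ind-subscheme of $\wh\SpaceFl_{\BP_{\nu_i},R_{\nu_i}}$ with prescribed (Schubert) special fiber, and there is no a~priori reason it coincides with the specific Beilinson--Drinfeld style deformation constructed in \cite{PZ}; moreover \cite{PZ} is written for the mixed-characteristic setting, so a direct citation would need adaptation. The paper's miracle-flatness route needs only the Cohen--Macaulay property of the special fiber, which is available from \cite{PR2} without any such identification.
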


\begin{proof}
We may assume that the boundness condition $\wh Z_{\nu_i}$ is defined over reflex ring $R_{\nu_i}=R_{\wh Z_{\nu_i}}$. This is because the extension of discrete valuation rings is faithfully flat. As $\nabla_n^{H, \wh Z_\ul\nu}\scrH^1(C,\FG)^\ul\nu$ is locally of finite type, the ring homomorphism from the stalk at a point $y\in \nabla_n^{H, \wh Z_\ul\nu}\scrH^1(C,\FG)^\ul\nu$  to the completion is faithfully flat. Therefore by Theorem \ref{ThmRapoportZinkLocalModel} it suffices to check the flatness of $\wh Z_\ul\nu \to \Spf A_\ul\nu$. Note that according to \cite[Theorem~8.4]{PR2} singularities of the special fiber of $\wh Z_{\nu_i}$ are rational. In particular the special fiber of $\wh Z_{\nu_i}$ is Cohen-Macaulay. Thus $\wh Z_{\nu_i}$ is Cohen-Macaulay by \cite[Theorem~17.3]{Matsumura}. Therefore $\wh Z_\ul\nu$ is Cohen-Macaulay by \cite{B-K}. On the other hand, as $R_{\wh Z_{\nu_i}}$ is a dvr and $S(\omega_i)$ is irreducible \cite[Remark~2.6]{Richarz13}, for every closed point $z \in \wh Z_{\nu_i}$, the dimension of\forget{the generic fiber of} $\CO_{\wh Z_{\nu_i},z} \otimes_{\wh A_{\nu_i}} \wh Q_{\nu_i}$ equals $\ell_i(\omega_i)=\dim \CO_{\wh Z_{\nu_i},z} \otimes_{\wh A_{\nu_i}} \kappa(\nu_i)$\forget{ dimension of the special fiber $\ell_i(\omega_i)$} \forget{comment{this is stack project lemma 10.124.9, should look for a better reference...}}. Hence we may conclude \forget{that the canonical morphism $A_\ul\nu \to \CO_{\wh Z_{\ul\nu},z}$ is flat }by \cite[IV, Proposition 6.1.5]{EGA} \forget{\comment{\cite[Theorem~23.1]{Matsumura}}}.
The assertion about normality now follows from \cite[IV, Th\'eor\`eme 12.2.4 and Proposition 6.14.1]{EGA}, \cite[Theorem~A]{Richarz13} and \cite[Theorem~8.4]{PR2}.

\end{proof}

Following Varshavsky's argument \cite[Corollary 2.21 c)]{Var}\forget{, together with the analogue of Rapoport-Zink local model Theorem \ref{ThmRapoportZinkLocalModel} and the above discussion,} we can now prove the following.

\begin{proposition}\label{PropICSheafNablaH}
Keep the assumption in Proposition~\ref{PropDimOfNablaH} The IC-sheaves $IC(\nabla_n^{H, \ul\omega}\scrH_s^1)$ and the restriction of $IC(Hecke_s^{\ul\omega})$ coincide up to some shift and Tate twists. In particular the restriction of  $IC(\nabla_n^{H, \ul\omega}\scrH_s^1)$ to each stratum is a direct sum of complexes of the form $\ol\BQ_\ell(k)[2k]$.

\end{proposition}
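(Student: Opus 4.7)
The plan is to deduce both assertions from the local model roofs \ref{HeckeRoof} and \ref{nablaHRoof}, combined with the known purity of $IC$-sheaves on twisted affine Schubert varieties. First I would descend the two roofs to algebraic stacks of finite type by factoring out the normal subgroup $K_{\ul\nu}=\prod_i K_{\nu_i}\subset \prod_i L^+\BP_{\nu_i}$ that acts trivially on $\wh Z_{\ul\nu}$, exactly as in the Cartesian diagrams displayed before Proposition~\ref{PropDimOfNablaH}. Restricting to the special fibers, this produces a roof
\[
\nabla_n^{H,\ul\omega}\scrH_s^1 \;\xleftarrow{\pi^{K_{\ul\nu}}}\; \bigl(\nabla_n^{H,\ul\omega}\wt{\scrH_s^1}\bigr)^{K_{\ul\nu}} \;\xrightarrow{f^{K_{\ul\nu}}}\; \prod_i\bigl[K_{\nu_i}\backslash\CS(\omega_i)\bigr]
\]
and analogously for $Hecke_s^{\ul\omega}$, in which the left leg is a smooth torsor under the finite-type smooth group $\prod_i K_{\nu_i}\backslash L^+\BP_{\nu_i}$ of some relative dimension $r$, while the composition of the right leg with the section $s'$ from Theorem~\ref{ThmRapoportZinkLocalModel} is formally étale on an étale neighborhood of any chosen geometric point. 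For the Hecke version the analogous composition is formally smooth of relative dimension $d:=\dim\scrH^1(C,\FG)$, cf.\ Proposition~\ref{PropLocalModelHecke}.

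Since the intermediate extension commutes with smooth pullback up to a cohomological shift by twice the relative dimension and a Tate twist, and since the Kottwitz-Rapoport stratification of the moduli stacks corresponds under the local model precisely to the Schubert stratification on $\prod_i\CS(\omega_i)$, both $IC(\nabla_n^{H,\ul\omega}\scrH_s^1)$ and $IC(Hecke_s^{\ul\omega})$ coincide, up to explicit shifts and Tate twists, with the pullback of $IC\bigl(\prod_i[L^+\BP_{\nu_i}\backslash \CS(\omega_i)]\bigr)$. Comparing these two identifications yields the first assertion. For the second assertion I would invoke the purity of $IC$-sheaves on twisted affine Schubert varieties in the tamely ramified setting: under the hypotheses that $G$ splits over a tamely ramified extension and $p\nmid|\pi_1(G_{\mathrm{der}})|$, \cite[Theorem~8.4]{PR2} together with the work of Zhu and Richarz shows that the restriction of $IC(\CS(\omega_i))$ to each Schubert cell is a direct sum of Tate twists $\ol\BQ_\ell(k)[2k]$. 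Transporting this pure Tate property along the local model roof, using the compatibility of the stratifications, delivers the desired statement on the Kottwitz-Rapoport strata of $\nabla_n^{H,\ul\omega}\scrH_s^1$.

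The main technical obstacle I expect is the careful bookkeeping of cohomological shifts and Tate twists when pulling back $IC$-sheaves through both legs, which is somewhat delicate because of the infinite-dimensional nature of $L^+\BP_{\nu_i}$. The reduction to the finite-type quotient by $K_{\nu_i}$ and the verification that $\prod_i K_{\nu_i}\backslash L^+\BP_{\nu_i}$ is genuinely of finite type (using the projectivity of $\wh Z_{\nu_i,R_{\nu_i}}$ recalled immediately before Proposition~\ref{PropDimOfNablaH}) are the key points that legitimize this bookkeeping. A secondary subtlety is to ensure that the open dense smooth substack $\nabla_n^{H,\ul\omega}\scrH_s^{1\circ}$ used to define $IC(\nabla_n^{H,\ul\omega}\scrH_s^1)$ maps under the local model roof precisely onto the top-dimensional open Schubert cell inside $\prod_i\CS(\omega_i)$; this will follow from Proposition~\ref{PropDimOfNablaH}.
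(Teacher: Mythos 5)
Your overall strategy is the paper's: transfer the $IC$-sheaf question along the two local model roofs (\'etale for $\nabla_n^{\ul\omega}\scrH_s^1$ via $f'\circ s'$, smooth of relative dimension $\dim\scrH^1$ for $Hecke_s^{\ul\omega}$ via $f\circ s$), match the Kottwitz--Rapoport strata with the Schubert strata, and then feed in the Tate-type nature of $IC$ on twisted affine Schubert varieties. The paper implements the first assertion slightly differently from you: instead of descending to the finite-type quotients by $K_{\ul\nu}$ and pulling back $IC$ of $\prod_i[L^+\BP_{\nu_i}\backslash\CS(\omega_i)]$, it works directly over the \'etale neighborhoods $U_y$, $U_y'$ furnished by Proposition~\ref{PropLocalModelHecke} and Theorem~\ref{ThmRapoportZinkLocalModel} and writes the chain $IC(U_y')=(f'\circ s')^\ast IC(\prod_i\CS(\omega_i))=(f\circ s\circ i_{U_y})^\ast IC(\prod_i\CS(\omega_i))=i_{U_y}^\ast IC(Hecke_{n,s}^{\ul\omega})|_{U_y}(-\dim\scrH^1/2)[-\dim\scrH^1]$, which sidesteps the infinite-dimensional bookkeeping you are worried about; your $K_{\ul\nu}$-quotient formulation is a legitimate alternative and buys a statement on genuine finite-type stacks, at the cost of keeping track of the extra fiber dimension of $\prod_i K_{\nu_i}\backslash L^+\BP_{\nu_i}$.

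The one point you should repair is the input for the second assertion. You justify the Tate-type property of $IC(\CS(\omega_i))$ on Schubert cells by appealing to purity results ``in the tamely ramified setting'' (citing \cite[Theorem~8.4]{PR2} and Zhu--Richarz), i.e.\ under the hypotheses that $G$ splits over a tame extension and $p\nmid|\pi_1(G_{der})|$. Those hypotheses are \emph{not} part of Proposition~\ref{PropDimOfNablaH} (they only enter Theorem~\ref{ThmFlatIntModel}), and \cite[Theorem~8.4]{PR2} is a statement about rational singularities/Cohen--Macaulayness rather than about stalks of $IC$-sheaves, so as written your argument proves the second claim only in a special case and with a misdirected citation. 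The paper instead derives the required input unconditionally: by \cite[Theorem~3.4, Remark~2.9]{Richarz} the Bott--Samelson--Demazure resolution $\Sigma(\omega_i)\to\CS(\omega_i)$ exists for Schubert varieties in twisted affine flag varieties and is an iterated tower of projective homogeneous fiber bundles, hence admits an affine paving; combining this with the decomposition theorem of \cite{BBD} gives that the restriction of $IC(\CS(\omega_i))$ to each cell is a direct sum of complexes $\ol\BQ_\ell(k)[2k]$, and this is then transported along the roof exactly as you propose (together with Proposition~\ref{GR-Hecke} on the Hecke side). Substituting this resolution argument for your purity citation closes the gap and removes the extraneous tameness assumption.
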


\begin{proof}

\forget
{
The proof proceeds in a similar way as \cite[Corollary 2.21 c)]{Var}, although one has to modify the following points. Namely, along the proof, instead of \cite[Theorem~2.20]{Var} one must implement the analogue of Rapoport-Zink local model which we obtained in Theorem \ref{ThmRapoportZinkLocalModel} (or alternatively Theorem \ref{ThmLocalModelI}). Furthermore, Bott-Samelson-Demazure resolution of singularities in the split reductive case, should be replaced with it's generalized version, constructed by Richarz \cite[Theorem~3.4]{Richarz}. Notice that also in his context, the Bott-Samelson-Demazure variety $\Sigma(\omega)$, which appears as a resolution of singularities of a Schubert variety $\CS(\omega)$ in a twisted affine flag variety, may also be viewed as an iterated tower of projective homogeneous fiber bundles; see \cite[Remark~2.9]{Richarz}. Recall that projective homogeneous varieties admit cellular decomposition (i.e. they  can be paved by affine spaces).  
}
 
According to Proposition~\ref{PropH_DL-Str} we may forget the $H$-level structure. The stratification on $\nabla_n^{\ul\omega}\scrH_s^1$ is induced by that of $Hecke_s^{\ul\omega}$. Moreover by Proposition \ref{PropLocalModelHecke} and Theorem \ref{ThmRapoportZinkLocalModel} and \cite[Remark~2.6]{Richarz} the smooth open stratum $\nabla_n^{\ul\omega}\scrH_s^{1^\circ}$  lies inside the pull back of the open smooth stratum  $Hecke_s^{\ul\omega^\circ}$ of $Hecke_s^{\ul\omega}$, thus the first argument is obvious over the open stratum.\\ 
\noindent
Regarding Proposition \ref{PropLocalModelHecke} and Theorem \ref{ThmRapoportZinkLocalModel}, we have the following diagram

\[
\xymatrix {
U_y\ar[r] \ar@/^1pc/[rr]^s \ar@/^2pc/[rrr]^{Smooth}& Hecke_{n,s}^{\ul\omega} & \wt{Hecke}_{n,s}^{\ul\omega} \ar[l]_\pi\ar[r]^f & \prod_i \CS(\omega_i) \ar@{=}[d] \\
U_{y}'\ar[r]\ar[u]_{i_{U_y}}\ar@/_1pc/[rr]_{s'}\ar@/_2pc/[rrr]_{Etale}&\nabla_n^{\ul\omega}\scrH_s^1\ar[u]_i & \wt{\nabla_n^{\ul\omega}\scrH_s}^1 \ar[l]_{\pi'}\ar[u] \ar[r]^{f'} & \prod_i \CS(\omega_i) \;.
}
\]

Now it suffices to show that the statement holds for the restriction of the IC-sheaves to the \'etale neighborhoods $U_y$ and $U_y'$

\begin{eqnarray*}
IC(U_y')=&(f'\circ s')^\ast IC(\prod_i \CS(\omega_i))\\
=&(f\circ s\circ i_{U_y})^\ast IC(\prod_i \CS(\omega_i))\\
=&i_{U_y}^\ast IC(U_y)(-dim \scrH^1/2)[-dim \scrH^1]\\
=&i_{U_y}^\ast IC(Hecke_{n,s}^{\ul\omega})|_{U_y}(-dim \scrH^1/2)[-dim \scrH^1]\\
=&i^\ast IC(Hecke_{n,s}^{\ul\omega})|_{U_y'}(-dim \scrH^1/2)[-dim \scrH^1],
\end{eqnarray*}

where the first equality follows from Theorem \ref{ThmRapoportZinkLocalModel}, the third equality  follows from Proposition \ref{PropLocalModelHecke} and all other equalities follow from commutativity of the above diagram.\\
Consider the Bott-Samelson-Demazure resolution of singularities $\Sigma(\omega_i)\to \CS(\omega_i)$ of the affine Schubert variety $\CS(\omega_i)$, constructed by Richarz \cite[Theorem~3.4]{Richarz}. The Bott-Samelson-Demazure variety $\Sigma(\omega_i)$ may be viewed as a tower of iterated projective homogeneous fiber bundles \cite[Remark~2.9]{Richarz}. Recall that projective homogeneous varieties admit cellular decomposition. Now the last assertion follows from the first statement of the corollary, Proposition~\ref{GR-Hecke} and the decomposition theorem \cite{BBD}.
\end{proof}

\begin{remark} \label{Lang Correspondence}
(The generalized Lang Correspondence)
We have the following roof from the local model diagram
\[ 
\xygraph{
!{<0cm,0cm>;<1cm,0cm>:<0cm,1cm>::}
!{(0,0) }*+{\nabla_n^{\wh Z_{\ul\nu}}\wt{\scrH_{R_\ul\nu}^1}}="a"
!{(-1.5,-1.5) }*+{\nabla_n^{\wh Z_{\ul\nu}}\scrH_{R_\ul\nu}^1}="b"
!{(1.5,-1.5) }*+{\prod_i \wh Z_{\nu_i, R_{\nu_i}},}="c"
"a":^{\pi}"b" "a":_{f}"c"
}  
\]
see Theorem~\ref{ThmRapoportZinkLocalModel}. 
Let $y:=\ul \CG$ be a global $\FG$-shtuka over $S$. Then, the above diagram together with the uniformization morphism $\Theta:=\Theta(\ul\CG)$ from \cite[Theorem 7.4]{AH_Global} induces the following roof
\[ 
\xygraph{
!{<0cm,0cm>;<1cm,0cm>:<0cm,1cm>::}
!{(0,0) }*+{\prod_{i} \ul{\CM}_{\ul{\BL}_0}^{\wh Z_{\nu_i}}\times_{\nabla_n^{\hat{Z}_{\ul\nu}}\scrH_{R_\ul\nu}^1}\nabla_n^{\wh Z_{\ul\nu}}\wt{\scrH_{R_\ul\nu}^1}}="a"
!{(-1.5,-1.5) }*+{\prod_{i} \ul{\CM}_{\ul{\BL}_0}^{\wh Z_{\nu_i}}}="b"
!{(1.5,-1.5) }*+{\prod_i \wh{Z}_i, R_{\nu_i}}="c"
"a":"b" "a":"c"
}  
\]
\forget{\comment{What does this fiber product mean? Is $\BF_q$ the right field here?}} Thus up to a choice of a section for $\pi$ we obtain a local morphism from the product of Rapoport-Zink spaces to $\prod_i \hat{Z}_i$. Note that $\prod_i \hat{Z}_i$ can be viewed as a parameter space for Hodge-Pink structures (see \cite{HartlPSp}). 


\end{remark}

%
%

{\small

}

\Verkuerzung
{
\vfill

\begin{minipage}[t]{0.5\linewidth}
\noindent
Esmail Arasteh Rad\\
Universit\"at M\"unster\\
Mathematisches Institut \\
erad@uni-muenster.de
\\[1mm]
\end{minipage}
\begin{minipage}[t]{0.45\linewidth}
\noindent
Somayeh Habibi\\
School of Mathematics,\\ 
Institute for Research in Fundamental Sciences
(IPM)
shabibi@ipm.ir
\\[1mm]
\end{minipage}
}
{}

\end{document}